\documentclass{cimart}


\DeclareMathOperator{\A}{A}
\DeclareMathOperator{\B}{B}
\DeclareMathOperator{\C}{C}
\DeclareMathOperator\Cend{Cend}
\DeclareMathOperator{\dcobound}{d}
\DeclareMathOperator\Der{Der}
\DeclareMathOperator{\Hom}{Hom}
\DeclareMathOperator{\Homol}{H}
\DeclareMathOperator{\im}{Im}
\DeclareMathOperator\Vir{Vir}
\newcommand\oo[1]{\mathbin {{}_{(#1)}}}

\title[Hochschild cohomology of universal associative conformal envelope of Virasoro]{Hochschild cohomology of the universal associative conformal envelope of the Virasoro Lie conformal algebra with coefficients in all finite modules}

\author{
    Hassan Alhussein, Pavel Kolesnikov and Viktor Lopatkin
    }

\authorinfo[
    Hassan Alhussein]{
     Siberian State University of Telecommunication and Informatics, Russia}{hassanalhussein2014@gmail.com }

\authorinfo[
   Pavel Kolesnikov]{
    Sobolev Institute of Mathematics, Russia}{%
    pavelsk77@gmail.com}

\authorinfo[
    Viktor Lopatkin]{
    HSE University, Russia}{%
    wickktor@gmail.com
    }

\abstract{%
    In this paper, we find the Hochschild cohomology groups of the universal associative conformal envelope $U(3)$ of the Virasoro Lie conformal algebra with respect to associative locality $N=3$ on the generator with coefficients in all finite modules. In order to obtain this result, we construct the Anick resolution via the algebraic discrete Morse theory and Gr\"obner--Shirshov basis.
    }

\keywords{
    Conformal algebra, Hochschild cohomology, Gr\"obner--Shirshov, Anick resolution, Algebraic Discrete Morse Theory.
    }

\msc{16E40 (primary);  81T05, 17A30, 17B55, 17A61 (secondary)}

\VOLUME{33}
\YEAR{2025}
\ISSUE{3}
\NUMBER{7}
\DOI{https://doi.org/10.46298/cm.14674}

\begin{document}

\setcounter{tocdepth}{2}
\tableofcontents 

\section{Introduction}
Conformal algebra is introduced in \cite{KacValgBeginners} as an algebraic tool to study the singular part of the operator product expansion (OPE)
of chiral fields in 2-dimensional conformal field theory.

The study of universal structures for conformal algebras was initiated in \cite{Roitman99}. The
classical theory of finite-dimensional Lie algebras often needs universal constructions
like free algebras and universal enveloping algebras. This was a motivation for the
development of combinatorial issues in the theory of conformal algebras \cite{BF}.

For example, consider the Virasoro conformal algebra $\Vir$. One may fix a natural number $N$ 
and construct 
the associative conformal algebra $U(N)$ 
generated by a single 
element $v$ such that $(v\oo{n} v) = 0$ for $n\ge N$, and 
the commutation relations of $\Vir $ hold.
Obviously, $U(1)=0$; the algebra $U(2)$ 
is known as the Weyl conformal algebra 
(also denoted $\Cend_{1,x}$ \cite{BKL}). 
The structure of $U(3)=U(4)$ was studied in 
\cite{Kol2020-IAJC} by means of the Gr\"obner--Shirshov bases 
method.

For every conformal algebra $C$ one may construct 
an ``ordinary'' algebra $\mathcal A(C)$ called a 
coefficient algebra of~$C$, which inherits many properties of $C$ (associative, commutative, Lie, Jordan, etc.).
Every conformal module over $C$ is also a module over 
$\mathcal A(C)$. Moreover, it was proved in \cite{BKV} that the (reduced) cohomology of a conformal algebra $C$ may be calculated via the corresponding cochain complex of its coefficient $\mathcal A(C)$.

It was shown in \cite{Kozlov2017} that the second Hochschild 
cohomology groups $\Homol ^2(U(2), M)$ are trivial for every conformal 
(bi-)module~$M$, but for 
higher Hochschild cohomology the direct computation becomes too complicated.

In \cite{Akl} was applied discrete algebraic Morse theory to evaluate the dimensions 
    of the Hochschild cohomology groups with scalar coefficients
    of the universal associative conformal envelope 
    $U(3)$ of the Virasoro Lie conformal algebra relative to 
    the associative locality bound $N=3$ on the generator.

    The Hochschild cohomology groups of the 
Weyl associative conformal algebra $U(2)$ with coefficients in all finite modules was found in \cite{Ak}.

The motivation to study $U(3)$ is that if $M$ is a finite irreducible 
$\Vir $-module then $M$ is a finite irreducible 
$U(\Vir,N=3)$-module, $U(3)$ is a more adequate associative conformal 
envelope of $\Vir $ than the Weyl conformal algebra~$U(2)$.

In this paper we find the higher Hochschild 
cohomology groups $\Homol^n(U(3),M)$ of the universal associative conformal envelope 
    $U(3)$ of the Virasoro Lie conformal algebra with coefficients in all finite modules~$M$. In order to obtain this result we construct the Anick resolution for its coefficient algebra via the algebraic discrete Morse theory.

\section{Review of conformal algebras}
Throughout the paper, 
$\Bbbk$ is a field of characteristic zero and
$\mathbb Z_+$ is the set of nonnegative integers.
\subsection{Conformal algebra}
 A left $H=\Bbbk [\partial ]$-module is called a 
{\em conformal algebra} \cite{KacValgBeginners} if it is equipped with 
a linear operator $\partial : C\to C$
and with $\Bbbk$-bilinear product $a\oo n b$ such that the following axioms hold
\begin{enumerate}
    \item[(C1)] for every $a,b \in C$ there exists $N = N(a,b)\in \mathbb Z_+$ such that $(a \oo n b) = 0$ for all $n\geq N$,
    \item[(C2)] $(\partial a\oo n b) = -n(a \oo {n-1} b)$,
    \item[(C3)] $(a \oo n \partial b) = \partial(a \oo n b) + n(a \oo {n-1} b)$.
\end{enumerate}

The structure of a conformal algebra on an $H$-module $C$
may be expressed by means of
a single polynomial-valued map called $\lambda$-product: 
\[
\begin{gathered}
(\cdot \oo{\lambda} \cdot) : C \otimes C \longrightarrow C[\lambda],
\\
(a \oo{\lambda} b) =\sum_{n=0}^{N(a,b)-1} \frac{\lambda^{n}}{n!}(a \oo{n} b),
\end{gathered}
\] 
where $\lambda $ is a formal variable, 
satisfying the following axioms:
\begin{align}
    &(\partial a\oo\lambda b) = -\lambda (a\oo\lambda b), 
      \label{eq:3/2-lin(1)}\\
    &(a\oo\lambda \partial b) = (\partial+\lambda) (a\oo\lambda b).
      \label{eq:3/2-lin(2)}
\end{align}
The number $N=N(a,b)$ is said to be a {\em locality level} of $a,b\in C$.
\subsection{Formal power series and Conformal algebra}
The previous definition is just a characterization of the following construction. Let $A$ be an algebra. One can consider formal power series $a(z),b(z)\in A[[z,z^{-1}]]$ and define
their $n$-product by the formula \cite{KacForDistrib}:
\begin{equation}\label{eq:2.1}
(a \oo n b)(z)=\mathrm{Res}_{w}a(z)b(z)(w-z)^n;\quad n\ge0.
\end{equation}
where $\mathrm{Res}_w$ is the coefficient of $w^{-1}$
in the formal power series in $z$ and $w$. Then the locality condition
$(\mbox{C}1)$ is equivalent to the relation
\[
a(z)b(z)(w-z)^N=0
\]
and relation $(\mbox{C}2)$ holds for $\partial = \frac{d}{dz}$. In the case of Lie algebras, the family of $n$-products $a \oo n b$ describes
the singular part of the operator product expansion  of two local series $a(z)$ and $b(z)$.

\subsection{The coefficient algebra}

To any conformal algebra $C$ there corresponds
an ``usual'' algebra $A=\mathcal A(C)$, and it is called {\em the coefficient algebra} such that $C$ lies in $A[[z,z^{-1}]]$
as a subspace of local series with the $n$-th products given by \eqref{eq:2.1}, which is constructed in the following way.

Consider the space of Laurent series $C[t^{-1},t]$ in an independent variable $t$ with
coefficients in $C$. For $a\in C$, denote $a(n)=at^{n}$. As a linear space $A$ is
isomorphic to the quotient of $C[t^{-1},t]$ over the subspace generated by the vectors $(\partial a)(n)+na(n-1)$ for $a\in C$. The formula for the product in A is derived from
\[
a(n)\cdot b(m) = \sum\limits_{s\ge 0} \binom{n}{s} (a\oo{s} b) (n+m-s).
\]
Note that the sum here is finite due to $(\mbox{C}1)$. The space $\mathcal A_+(C)$ spanned by all $a(n)$, $n\in \mathbb Z_+$, $a\in C$,
is a subalgebra of $\mathcal A(C)$ which is closed under the derivation~$\partial $.

There is a correspondence between the identities in a conformal algebra
and the identities in its coefficient algebra. For example, $A(C)$ is associative if and only if 
\begin{equation}\label{eq:ConfAss}
a \oo n (b \oo m c) = \sum _{s \geq 0} \binom{n}{s}
(a \oo{n-s} b) \oo {m+s} c
\end{equation}
for all $a, b, c \in C, n,m \in \mathbb Z_+$. In terms of the $\lambda$-product, the last relation may be expressed by a single formula
\[
a \oo \lambda (b \oo \mu c) = (a \oo \lambda b) \oo {\lambda+\mu} c, \quad a, b, c \in C,
\]
where $\lambda$ and $\mu$ are independent commuting variables \cite{KacForDistrib}. As in the world of ordinary algebras, an associative conformal algebra $C$ turns
into a Lie one (denoted by $C^{(-)}$) relative to new operations
\[
 [a\oo\lambda b] = (a\oo\lambda b) - (b\oo{-\partial - \lambda } a),\quad a,b\in C.
\]
\begin{example}
The Lie conformal algebra with one generator $v$ as free $H$-module and $\lambda$-product
\[
 (v\oo\lambda v ) = (\partial +2\lambda )v
\]
is called the Virasoro conformal algebra. The coefficient algebra of $\Vir $ is the Witt algebra
 $\mathcal A(\Vir) = \Der \Bbbk [t,t^{-1}]$, 
 and $\mathcal A_+(\Vir )= \Der \Bbbk [t]$.
\end{example}
\subsection{Universal enveloping of  Virasoro Lie conformal algebras}
Given an integer $N\ge 2$, 
one may construct an associative conformal envelope
$U(N)$
for the Virasoro Lie conformal algebra $\Vir $ with 
a generator $v$ 
which is universal
in the class of all such envelopes $C$ that $N_C(v,v)\le N$.
For example, $U(2)= U(\Vir ; N=2)$ is the Weyl conformal 
algebra~$\Cend_{1,x}$; the structure of $U(3) = U(\Vir ; N=3)$ 
is more complicated, and it was studied in 
\cite{Kol_ProcICAC}. We have that 
$U(3) = U(\Vir ; N=3)$ is an associative conformal algebra generated by $v$ relative to
\[
 \deg_\lambda (v\oo\lambda v) <N=3,\quad
 (v\oo\lambda v) - (v\oo{-\partial-\lambda } v ) = (\partial+2\lambda )v.
\]
The structure of the universal enveloping 
associative conformal algebra $U(3)$ 
of the Virasoro Lie conformal algebra $\Vir $
relative to the locality bound $N=3$ 
may be expressed by means $(.(n).)$, 
is generated by a single element $v$
such that $v\oo{n} v = 0$ for $n\ge 3$. 
The remaining defining relation of $U(3)$ is 
\begin{equation}\label{eq:vir-comm}
2v\oo{1} v - \partial (v\oo{2} v)= 2v.
\end{equation}
\subsection{Conformal module}
A left conformal module $M$ over an associative conformal algebra $C$ is a $\Bbbk[\partial]$-module endowed with a $\Bbbk$-bilinear map $C\otimes M\longrightarrow M[\lambda]$ satisfying the following axioms 
\begin{gather}
    (\partial a\oo\lambda m) = -\lambda (a\oo\lambda m), 
      \quad 
    (a\oo\lambda \partial m) = (\partial+\lambda) (a\oo\lambda m),
      \label{eq:3/2-lin(mod1-2)}\\
    (a\oo{\lambda } (b\oo\mu m)) = ((a\oo{\lambda } b)\oo{\mu+\lambda } m),
\end{gather}
for $a,b\in C, v\in M$.

Similarly, a conformal action of a Lie conformal algebra $L$
on a module $M$ meets \eqref{eq:3/2-lin(mod1-2)} 
and the conformal analogue of the Jacobi identity:
\[
(a\oo\lambda (b\oo\mu m )) - (b\oo\mu (a\oo\lambda m)) =
((a\oo\lambda b)\oo{\lambda+\mu} m),
\]
for $a,b\in L$, $m\in M$.

\begin{example}
Given a 1-generated free $H$-module  $M = \Bbbk [\partial ]u$
and two scalars $\Delta, \alpha \in \Bbbk$, 
one may define conformal action of $\Vir $  on $M$ as
\begin{equation}\label{eq:Vir}
(v\oo\lambda u) = (\alpha +\partial +\Delta\lambda )u.
\end{equation}
Denote the conformal $\Vir$-module obtained by $M(\alpha, \Delta)$.
For $\Delta \ne 0$, this is an irreducible $\Vir $-module, 
and every finite irreducible $\Vir $-module is isomorphic 
to an appropriate $M_{(\alpha, \Delta)}$ \cite{ChengKac}.
\end{example}

If $M$ is a module over an (associative or Lie) conformal algebra 
$C$, then $M$ is also a module over the ordinary 
(associative or Lie, respectively)
algebra $\mathcal A_+(C)$. 
Namely, for $a\in C$, $n\in \mathbb Z_+$, $u\in M$ the element 
$a(n)u$ is the coefficient at $\lambda ^{n}/n!$ of $(a\oo\lambda u)$:
\[
a\oo\lambda u  = \sum\limits_{n\ge 0} \dfrac{\lambda^n}{n!} a(n)u.
\]

For every conformal $C$-module $M$ over an associative conformal algebra $C$, the space $M$ is also a $C^{(-)}$-module relative to the same conformal action. 
The converse construction has a restriction due to locality. 
For example, the module $M_{(\alpha,\Delta)}$
over the Virasoro (Lie) conformal algebra is also a module 
over its universal enveloping associative conformal algebra 
$U(2)$
if and only if $\Delta = 0$ or $\Delta =1$ {\em but} the module $M_{(\alpha,\Delta)}$
over the Virasoro (Lie) conformal algebra is also a module 
over its universal enveloping associative conformal algebra 
$U(3)$ for all $\Delta$.
\subsection{Hochschild cohomology for associative algebras} \label{2.6}
Let $\Lambda$ be an associative algebra with a unit over a field $\Bbbk$. Let us start with the bar resolution (see \cite{McLane}) 
$\mathsf{B}_\bullet=(\mathsf{B_n}, \mathrm d_n)$  
where
\[
\mathsf{B}_n:= \Lambda 
\otimes (\Lambda/\Bbbk)^{\otimes n}
\]
The basis of $\mathsf B_n$ 
as a $\Lambda$-left module 
is presented by 
$[a_1|a_2|\dots |a_n]$, where $a_i$ are nontrivial (i.e., not equal to 1) 
reduced normal forms relative to the fixed Gr\"obner--Shirshov basis.

The differential
$\mathrm d_n : \mathsf{B}_n
   \to \mathsf{B}_{n-1}$
is defined as follows:
\[
\mathrm{d}_n([a_1| \ldots | a_n]) = (a_1 \otimes 1) 
[a_2| \ldots |a_n]
+  \sum\limits_{i=1}^{n-1}(-1)^{i}   [a_1| \ldots | 
N(a_ia_{i+1})| \ldots | a_n].
\]
Suppose  $M$ is an ``ordinary'' $\Lambda $-left module over $\Lambda$. 
Denote
\[
\mathrm C_{\mathrm B}^n = \Hom _\Lambda (\mathrm B_n, M) 
\]
this is the space of Hochschild cochains. 

The Hochschild differential 
\[
\Delta_{\mathrm B}^n  : \mathrm C_{\mathrm B}^n \to \mathrm C_{\mathrm B}^{n+1} 
\]
is obtained through 
\[
\big (\Delta_{\mathrm B}^n\varphi\big )(\mathbf x) = \varphi \dcobound_{n+1}(\mathbf x), \quad \varphi\in \mathrm C_{\mathrm B}^{n},
\  \mathbf x\in \mathrm B_{n+1}.
\]
{\em The Hochschild cohomology} for associative algebra $\Lambda$ with values in $M$ is
\[
\Homol^n(\Lambda,M)=\ker\Delta_n/\im\Delta_{n-1}.
\]
\subsection{Hochschild cohomology for associative conformal algebras}\label{2.2} Let $C$ be an associative conformal algebra and $M$ is $\Lambda$-left conformal module.
The {\em basic Hochschild complex}  
$\tilde \C^\bullet (C,M)$ for associative conformal algebra $C$ (see \cite{BKV}) is cochain spaces $\tilde \C^n (C,M)$, $n=1,2,\ldots$,
each of them is the space of all maps 
\[
\varphi_{\bar \lambda }: 
C^{\otimes n}\to M[\bar \lambda ],
\]
where $\bar\lambda = (\lambda_1,\ldots, \lambda_{n})$,
satisfying the conformal anti-linearity condition: 
\[
 \varphi_{\bar \lambda }(a_1,\ldots,\partial a_i, \ldots , a_n) =
 -\lambda_i \varphi _{\bar \lambda }(a_1,\ldots, a_n),\quad i=1,\ldots, n.
\]
The Hochschild differential 
$\dcobound_n : \tilde \C^n(C,M) \to \tilde \C^{n+1}(C,M)$
on the basic complex is given by
\[
(\dcobound_n\varphi)_{\bar\lambda }
(a_1,\ldots, a_{n+1}) = 
a_1 \oo{\lambda_1} 
\varphi_{\bar\lambda_0} (a_2, \ldots , a_{n+1})  
 + \sum\limits_{i = 1}^{n} (-1)^i\varphi_{\bar\lambda_i}
  (a_1,\ldots , a_i \oo{\lambda_i} a_{i+1},\ldots , a_{n+1}),
\]
for 
$\bar \lambda = (\lambda_1, \ldots, \lambda_{n+1})$, 
$\bar\lambda_0 = (\lambda_2, \ldots, \lambda_{n+1})$, 
$\bar\lambda_i = (\lambda_1, \ldots, \lambda_i+\lambda_{i+1},
\ldots , \lambda_{n+1} )$, $i=1,\dots, n$. The cohomology of the  basic Hochschild complex is called {\em the basic Hochschild cohomology} $\tilde H^\bullet (C, M)$. 

For every $n\ge 1$, the cochain space $\tilde \C^n(C,M)$ 
is a left  $\Bbbk[\partial]$-module:
\[
(\partial\varphi)_{\bar \lambda}(a_1,\ldots,a_n)=(\partial+\sum_{i=1}^{n}\lambda_i)\varphi_{\bar \lambda}(a_1,\ldots,a_n).
\]
For every $n\ge 1$, the map $\dcobound_n$ commutes with $\partial$. 
The quotient complex
\[
\C^\bullet (C,M)= 
\tilde \C^\bullet (C, M)/\partial \tilde \C^\bullet (C,M)\]
is called the {\em reduced Hochschild complex} and its cohomology is called the {\em reduced  Hochschild cohomology} $H^\bullet (C, M)$.

Consider the ``ordinary'' Hochschild complex 
$\C^\bullet (\mathcal A_+(C),M)$.
The maps
\[
\partial_n^* : \C^n( \mathcal A_+(C),M) \to \C^n( \mathcal A_+(C) ,M)
\]
given by
\[
(\partial_n^* f)(\alpha_1,\ldots, \alpha_n)
= \partial f(\alpha_1,\ldots, \alpha_n)
- \sum\limits_{i=1}^n 
f(\alpha_1,\ldots, \partial\alpha_i, \ldots, \alpha_n),
\]
and $\partial a(n) = -n a(n-1)$ for $a\in C$, $n\ge 0$, 
commute with the Hochschild differentials.

In \cite{BKV} was proved that 
\begin{gather*}
\C^\bullet (C,M)\cong 
\C^\bullet ( \mathcal A_+(C), M)/\partial^*_\bullet \C^\bullet ( \mathcal A_+(C),M),\\
\tilde H^\bullet (C,M)= 
H^\bullet ( \mathcal A_+(C), M).
\end{gather*}

So we can calculate (reduced) Hochschild cohomology of 
a conformal algebra $C$ via the Hochschild complex of its coefficient algebra $\mathcal A_+(C)$ and its quotient. 
To do that, we construct the Anick resolution for $\mathcal A_+(C)$ by means of the Morse matching method which is explained in detail in \cite{Akl}. 
We will use the homotopy maps constructed in this way 
to transfer the map $\partial^*_\bullet $
to the dual complex obtained from the Anick resolution,
as explained in the following sections.  
\subsection{Anick resolution for associative algebras} \label{Anires.}
Let $\Lambda$ be an associative algebra with a unit over a field $\Bbbk$. The Anick resolution (see \cite{Anick1983})
$\mathsf{A}_\bullet=(\mathsf{A_n}, \mathrm \delta_n)$ is complex, 
where
\[
\mathsf{A}_n:= \Lambda 
\otimes \Bbbk V^{(n-1)}.
\]

The basis of $\mathsf A_n$ 
as a $\Lambda$-left module 
is presented by 
$V^{(n-1)}$. Following Anick \cite{Anick1983} the elements of  $V^{(n-1)}$ are defined as: the $V^{(1)}$  is the set of all leading terms  of relations from Gr\"obner--Shirshov basis of $\Lambda$. A word $v=x_{i_1}\ldots x_{i_t}$ is an $n$-{\em prechain} if and only if there exist $a_j,b_j \in \mathbb{Z}$, $1 \le j \le n$, satisfying the following conditions:
\begin{itemize}
    \item $1=a_1<a_2 \le b_1<\ldots<a_n \le b_{n-1}<b_n=t$;
    \item $ x_{i_{a_j}}\ldots x_{i_{b_j}} \in V$ for $1 \le j \le n$.
\end{itemize}
An $n$-prechain $x_{i_1}\ldots x_{i_t}$ is an {\em $n$-chain} 
if only if the integers $a_j,b_j$ can be chosen in such a way that  $x_{i_1}\ldots x_{i_t}$  is not an $m$-prechain for neither 
$s < b_m$, $1 \le m \le n$. The set of all $n$-chains is denoted $V^{(n)}$.

The Anick differential $\delta_n$ can be constructed  by means of the Morse matching method which is explained in detail in \cite{Akl} or via the following algorithm: define two  $\Lambda $-linear operators 
$\delta_n'$ and $\delta_n''$ whose recurrent 
application
leads us to the desired $\delta_n$.
\begin{enumerate}
\item Let $[w]\in V^{(n)}$, calculate the ordinary differential
\begin{multline*}
    \delta_{n+1}'[w] 
= w_1 [w_2|\ldots |w_n|w_{n+1}]
 +\sum\limits_{j=1}^n (-1)^j [w_1|w_2|\ldots |s_j |\ldots | w_{n+1}] \in \Lambda\otimes (\Lambda /\Bbbk)^{\otimes n},
\end{multline*}
where 
 $s_j$ are the $V$-reduced forms of the products $w_jw_{j+1}$, 
   $j=1,\dots, n$;
 \item If $[v]\in (\Lambda/\Bbbk)^{\otimes n}$ belongs to $V^{(n-1)}$, then
\[
 \delta_{n+1}''[v] = [v]
\]
and the computation is finished.
Otherwise, 
suppose $[v]=[v_1|\dots |v_n]$ does not belong to $V^{(n-1)} $ 
 (here all $v_k$'s are $V$-reduced)
 then there exists the largest integer 
 $i\ge 0$ such that 
 $v_1\dots v_i \in V^{(i-1)}$,
 $v_{i+1}$ may be presented as $v_{i+1}=v_{i+1}'v_{i+1}''$,
 and
   $[v_1|\ldots |v'_{i+1}]$ belongs to $V^{(i)}$.
Then set
\[
 \delta_{n+1}''[v] = (-1)^{i}\delta_{n+1}'([v_1|\ldots|v'_{i+1}|v''_{i+1}|\ldots|v_{n}])+[v].
\]
If such an index $i\ge 0$ does not exist then set
$\delta_{n+1}''([v])=0$.
\end{enumerate}

After finitely many steps, 
the computation of $\delta_{n+1}''$ finishes. Therefore, 
\[
 \delta_{n+1}(w) = (\delta_{n+1}'')^{k} \delta_{n+1}'[w], 
\]
where $k=k(w)\ge 1$ is a sufficiently large integer (see examples in Theorem \ref{Anickdiff}).

We have that Anick resolution is homotopy equivalent to bar resolution and homotopy maps $\mathrm f_n: \B_n \to \A_n$, $\mathrm g_n:\A_n \to \B_n$ are given (see \cite{Akl}) such that $\delta_n 
= \mathrm f_{n-1}\dcobound_n \mathrm g_n:
    \A_n\to \A_{n-1}$.
    
\section{Anick resolution for $\mathcal{A}_{+}(U(3))$}\label{sec:Morse}

\begin{definition}[\hspace{-.001cm}\cite{Roit2000}]
The algebra $\mathcal A_+(U(3))$ is generated by the elements $v(n)$, $n\ge 0$, 
relative to the following relations:
\begin{align}
 v(n)v(m) -3v(n-1)v(m+1) & \\\nonumber
 + \ 3v(n-2)v(m+2)& - v(n-3)v(m+3) = 0,\quad n\ge 3, \ m\ge 0, \\
 \label{eq:u3-defn-loc}
v(n)v(m)& - v(m)v(n) =  (n-m)v(n+m-1), \quad n>m\ge 0. 
\end{align}
\end{definition}

\begin{theorem}[\hspace{-.001cm}\cite{Akl}]\label{thm:GSB-U(3)} 
The Gr\"obner--Shirshov basis of $\mathcal A_+(U(3))$ 
consists of the relations
\begin{align}
v(1)v(0)&=v(0)v(1)+v(0),& \label{eq:u3-gsb-10} \\
v(n)v(m) &=\frac{nm}{n+m-1} v(1)v(n+m-1)-\frac{(n-1)(m-1)}{n+m-1}v(0)v(n+m) \\\nonumber
&+\frac{n(n-1)}{n+m-1}v(n+m-1),\quad n\ge 2. \label{eq:u3-gsb}
\end{align}
\end{theorem}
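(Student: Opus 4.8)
The plan is to prove this by the Composition--Diamond Lemma for associative algebras. First I would fix the degree--lexicographic monomial order on the free algebra on the symbols $v(0),v(1),v(2),\dots$, ordering the generators by $v(0)<v(1)<v(2)<\cdots$ and comparing words first by length and then lexicographically from the left. This is an admissible well-order: within a fixed length the lexicographic order on $\mathbb Z_+^{\,k}$ is a well-order, and comparing lengths first makes every length-one tail smaller than any length-two head. Under this order the leading monomial of \eqref{eq:u3-gsb-10} is $v(1)v(0)$, while for each $n\ge 2$, $m\ge 0$ the leading monomial of \eqref{eq:u3-gsb} is $v(n)v(m)$, because the two degree-two terms on its right-hand side begin with $v(1)$ or $v(0)$ (both smaller than $v(n)$) and the remaining term has length one. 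Hence the reduced words are exactly those avoiding every factor $v(a)v(b)$ with $a\ge 2$ and the factor $v(1)v(0)$; equivalently they have the shape $v(0)^{p}v(1)^{q}v(r)$ with $r\ge 1$ whenever $q\ge 1$.

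Second I would check that \eqref{eq:u3-gsb-10}--\eqref{eq:u3-gsb} generate the same two-sided ideal as the defining relations. Relation \eqref{eq:u3-gsb-10} is just the commutator relation \eqref{eq:u3-defn-loc} at $n=1$, $m=0$. For \eqref{eq:u3-gsb} I would exploit the locality relation (the first defining relation): fixing the index-sum $s=n+m$ and writing $f(j)=v(j)v(s-j)$, it becomes the constant-coefficient recurrence $f(j)=3f(j-1)-3f(j-2)+f(j-3)$ for $j\ge 3$, whose characteristic polynomial is $x^{3}-3x^{2}+3x-1=(x-1)^{3}$. Thus $f(j)$ equals a quadratic polynomial in $j$ with coefficients in the algebra, determined by $f(0),f(1),f(2)$. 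The commutator relation \eqref{eq:u3-defn-loc} forces $f(j)\equiv f(s-j)$ modulo length-one elements, so the quadratic is symmetric under $j\mapsto s-j$ and collapses to a combination of $f(0)=v(0)v(s)$ and $f(1)=v(1)v(s-1)$; setting $s=n+m$ and using $n+m-1-nm=-(n-1)(m-1)$ reproduces the degree-two part of \eqref{eq:u3-gsb}, and tracking the length-one corrections carried by the right-hand side of \eqref{eq:u3-defn-loc} yields the coefficient $\tfrac{n(n-1)}{n+m-1}$ of $v(n+m-1)$. Reducing both defining relations by \eqref{eq:u3-gsb-10}--\eqref{eq:u3-gsb} shows conversely that they hold, so the ideals coincide.

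Third --- and this is where the real work lies --- I would verify that every composition of these relations reduces to zero. Since all leading monomials have length two, only intersection ambiguities occur, namely length-three words $v(a)v(b)v(d)$ in which both $v(a)v(b)$ and $v(b)v(d)$ are leading monomials. Inspecting the leading monomials leaves precisely two families: $v(a)v(b)v(d)$ with $a,b\ge 2$ and arbitrary $d\ge 0$, and $v(a)v(1)v(0)$ with $a\ge 2$. For each I would reduce the word in both possible ways --- rewriting the left factor first, or the right factor first --- and drive both outcomes to normal form; each path produces a $\Bbbk$-linear combination of words $v(0)^{p}v(1)^{q}v(r)$ whose coefficients are rational functions of the indices, and triviality of the composition becomes a finite list of rational-function identities. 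The \emph{main obstacle} is the coefficient bookkeeping rather than any conceptual difficulty: the denominators $n+m-1$ accumulate through successive reductions, and I expect the decisive step to be confirming that the two expansions of $v(a)v(b)v(d)$ agree after clearing these denominators.

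Once all ambiguities are confluent and the ideals are known to coincide, the Composition--Diamond Lemma gives that \eqref{eq:u3-gsb-10}--\eqref{eq:u3-gsb} form a Gröbner--Shirshov basis of $\mathcal A_+(U(3))$, and that the reduced words $v(0)^{p}v(1)^{q}v(r)$ form a $\Bbbk$-basis --- exactly the data needed to build the Anick resolution in the sequel.
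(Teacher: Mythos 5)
The paper offers no proof of Theorem~\ref{thm:GSB-U(3)}: it is imported verbatim from \cite{Akl}, so there is no in-paper argument to compare yours against. Taken on its own terms, your plan is the standard (and surely the intended) route: deg-lex order with $v(0)<v(1)<\cdots$, leading monomials $v(1)v(0)$ and $v(n)v(m)$ for $n\ge 2$, equality of ideals, and triviality of compositions. The parts you actually carry out are correct. Your description of the normal words $v(0)^pv(1)^qv(r)$ and of the only two ambiguity families, $v(a)v(b)v(d)$ with $a,b\ge 2$ and $v(a)v(1)v(0)$ with $a\ge 2$, matches the obstruction set $V^{(1)}$ used later in the paper. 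The derivation of the quadratic relations is the nicest part: the $(x-1)^3$ recurrence gives $f(j)=A+Bj+Cj^2$, the commutator relation gives $f(j)-f(s-j)=(2j-s)v(s-1)$ (note: not symmetry on the nose, as you loosely say, but symmetry up to the linear correction $(B+Cs)=v(s-1)$), and solving yields $f(j)=f(0)+jv(s-1)+\tfrac{j(j-s)}{s-1}\bigl(f(0)-f(1)+v(s-1)\bigr)$, which at $j=n$, $s=n+m$ reproduces exactly the coefficients $-\tfrac{(n-1)(m-1)}{n+m-1}$, $\tfrac{nm}{n+m-1}$, $\tfrac{n(n-1)}{n+m-1}$ of the statement. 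You should say explicitly that the cases $s=2,3$, where the recurrence has no instances, are obtained from the commutator relation alone.

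The one genuine gap is that the actual content of the theorem --- triviality of the two families of compositions --- is announced rather than performed. Everything preceding it is routine; the theorem stands or falls with whether the two reductions of $v(a)v(b)v(d)$ and of $v(a)v(1)v(0)$ really meet, and ``I expect the decisive step to be confirming that the two expansions agree'' is a promissory note, not a proof. They do agree (for instance both reduction paths of $v(2)v(2)v(0)$ terminate at $\tfrac43 v(0)v(1)v(3)-\tfrac13 v(0)v(0)v(4)+\tfrac23 v(0)v(3)+4v(1)v(2)+2v(2)$), but the general verification, with the denominators $a+b-1$, $b+d-1$, $a+b+d-1$, $a+b+d-2$ accumulating and cancelling, is precisely the work the Composition--Diamond Lemma demands and the only step that could conceivably fail. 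Until it is written out, you have exhibited a plausible candidate for the Gr\"obner--Shirshov basis, not proved that it is one.
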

To find the Anick complex, we need two steps. First, we have to find the set of obstructions for $\mathcal A_+(U(3))$ relative to the given Gr\"obner-Shirshov basis (the set of leading terms in $\mathcal A_+(U(3))$) 
and the set of $n$-chains. Next, build a Morse graph and calculate the path weights for every $n$-chain $w$ and $(n-1)$-chain $w'$ for all $n\ge1$, all graphs were building in \cite{Akl} or via methods in Section \ref{Anires.}.

\begin{corollary}\label{cor:U(3)AnickChains}
The Anick chains $\Lambda^{(n-1)}$ 
are of the following form:
\[
 [v(m_1)|v(m_2) | \dots |v(m_{n-1})|v(m_n)], 
\]
where $m_1,\dots, m_{n-2}\ge 2$ and either $m_{n-1}\ge 2$
or $(m_{n-1}, m_n)=(1,0)$.
\end{corollary}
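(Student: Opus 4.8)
The plan is to read off the obstruction set $V=V^{(1)}$ from Theorem~\ref{thm:GSB-U(3)} and then exploit the fact that every obstruction is a word of length exactly two. The leading monomials of the Gr\"obner--Shirshov basis are $v(1)v(0)$ together with all $v(n)v(m)$ for $n\ge 2$, $m\ge 0$; none of these divides another (distinct two-letter words), so they constitute precisely the obstruction set. The first step is thus to record the elementary dictionary that governs everything else: a two-letter word $v(a)v(b)$ lies in $V$ if and only if $a\ge 2$, or $(a,b)=(1,0)$.

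Next I would analyze the tiling that defines a prechain. Since each tile $x_{i_{a_j}}\cdots x_{i_{b_j}}\in V$ occupies exactly two letters, for a $k$-chain the inequalities $1=a_1<a_2\le b_1<\dots<a_k\le b_{k-1}<b_k=t$ force $a_j=j$ and $b_j=j+1$, whence $t=k+1$. Taking $k=n-1$, an $(n-1)$-prechain is a word $v(m_1)\cdots v(m_n)$ of $n$ letters, and the prechain condition reduces to requiring every consecutive pair $v(m_j)v(m_{j+1})$, for $j=1,\dots,n-1$, to lie in $V$. The minimality built into the notion of a \emph{chain} is automatic here: no prefix of length one is an obstruction, and two length-two tiles cannot fit inside fewer than three letters, so for each $m$ the index $b_m$ is already the smallest making the corresponding prefix an $m$-prechain. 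Hence the $(n-1)$-chains coincide exactly with those words all of whose consecutive pairs are leading terms. (Equivalently, one may phrase this as an induction on $n$, appending a single overlapping length-two tile to pass from an $(n-2)$-chain to an $(n-1)$-chain.)

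The final step is a short propagation argument turning the consecutive-pair condition into the asserted inequalities. Using the dictionary, for each $j\le n-1$ the entry $m_j$ cannot be $0$, since no obstruction begins with $v(0)$, so $m_j\ge 1$; and if $m_j=1$ then necessarily $m_{j+1}=0$. If $j\le n-2$, the value $m_{j+1}=0$ would violate the condition $m_{j+1}\ge 1$ forced by the pair at position $j+1$, a contradiction; hence $m_j\ge 2$ for all $j\le n-2$. For the last overlapping pair $(j=n-1)$ there is no successor constraint, so either $m_{n-1}\ge 2$ with $m_n$ arbitrary, or $m_{n-1}=1$ and $(m_{n-1},m_n)=(1,0)$. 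This is exactly the description in the statement, and conversely any tuple of this shape plainly satisfies every consecutive-pair membership $v(m_j)v(m_{j+1})\in V$, hence is an $(n-1)$-chain.

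I expect the only genuinely delicate point to be the verification that the \emph{chain} minimality clause adds nothing beyond the prechain condition; this is where one must use that all obstructions share the common length two, ruling out any shorter competing decomposition. The remaining bookkeeping---the membership dictionary for $V$ and the tail propagation---is routine.
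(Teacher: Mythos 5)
Your proof is correct and follows essentially the route the paper takes implicitly: the obstruction set $V^{(1)}=\{v(n)v(m),\,v(1)v(0):n\ge 2,\ m\ge 0\}$ is read off from the leading terms of the Gr\"obner--Shirshov basis of Theorem~\ref{thm:GSB-U(3)}, and since every obstruction has length two, the $(n-1)$-chains are exactly the words of length $n$ whose consecutive pairs all lie in $V^{(1)}$, which forces $m_1,\dots,m_{n-2}\ge 2$ and leaves the stated dichotomy for the final pair. The paper states this as an immediate corollary without spelling out the tiling and minimality details, and your verification that the chain-minimality clause is vacuous when all obstructions share length two is the right (and only delicate) point to check.
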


We will write $[m_1|\dots |m_n]$ instead of $[v(m_1)|\dots |v(m_n)]$ for the sake of simplicity.

\begin{theorem}\label{Anickdiff}
 The Anick differential $\delta_{n+1}: A_{n+1}\to A_{n}$ is given by the following rules: 
    \begin{align*}
    & \delta_{n+1}[v(i_1)v(i_2)\dots v(i_{n+1})]=v(i_1)[v(i_2)\dots v(i_{n+1})]\\
    &+\sum^n_{j=1}(-1)^{j}\frac{i_ji_{j+1}}{i_j+i_{j+1}-1}v(1)[v(i_1)v(i_2)\ldots v(i_j+i_{j+1}-1)\ldots v(i_{n+1})]\\
    &+\sum^n_{j=2}\sum^{j-1}_{t=1}(-1)^{j}\frac{i_ji_{j+1}}{i_j+i_{j+1}-1}(i_t-1)[v(i_1)v(i_2)\ldots v(i_j+i_{j+1}-1)\ldots v(i_{n+1})]\\
   & +\sum^n_{j=1}(-1)^{j}\frac{i_j(i_j-1)}{i_j+i_{j+1}-1}[v(i_1)v(i_2)\ldots v(i_j+i_{j+1}-1)\ldots v(i_{n+1})]\\
    &+\sum^n_{j=2}\sum^{j-1}_{t=1}(-1)^{j+1}i_t\frac{(i_j-1)(i_{j+1}-1)}{i_j+i_{j+1}-1}[v(i_1)\ldots v(i_t-1)\ldots v(i_j+i_{j+1})\ldots v(i_{n+1})]\\
   & +\sum^n_{j=1}(-1)^{j+1}\frac{(i_j-1)(i_{j+1}-1)}{i_j+i_{j+1}-1}v(0)[v(i_1)\ldots v(i_j+i_{j+1})\ldots v(i_{n+1})],
    \end{align*}
    
    where $i_1,i_2,\ldots,i_n \ge 2$, $i_{n+1}\ge 0$, and 
    
    \begin{align*}
    &\delta_{n+1}[v(i_1)v(i_2)\ldots v(i_{n-1})v(1)v(0)]=v(i_1)[v(i_2)\ldots v(i_{n-1})v(1)v(0)]\\
    \end{align*}
    \begin{align*}
    &+\sum^{n-2}_{j=1}(-1)^{j}\frac{i_ji_{j+1}}{i_j+i_{j+1}-1}v(1)[v(i_1)\ldots v(i_j+i_{j+1}-1)\ldots v(i_{n-1})v(1)v(0)]\\
    &+\sum^{n-2}_{j=1}(-1)^{j+1}\frac{(i_j-1)(i_{j+1}-1)}{i_j+i_{j+1}-1}v(0)[v(i_1)\ldots v(i_j+i_{j+1})\ldots v(i_{n-1})v(1)v(0)]\\
    &+\sum^{n-2}_{j=1}(-1)^{j}\frac{i_j(i_j-1)}{i_j+i_{j+1}-1}[v(i_1)\ldots v(i_j+i_{j+1}-1)\ldots v(i_{n-1})v(1)v(0)]\\
    &+\sum^{n-2}_{j=2}\sum^{j-1}_{t=1}(-1)^{j+1}i_t\frac{(i_j-1)(i_{j+1}-1)}{i_j+i_{j+1}-1}[v(i_1)\ldots v(i_t-1)\ldots  v(i_j+i_{j+1})\ldots v(i_{n-1})v(1)v(0)]\\
    &+\sum^{n-2}_{j=2}\sum^{j-1}_{t=1}(-1)^{j}(i_t-1)\frac{i_ji_{j+1}}{i_j+i_{j+1}-1}[v(i_1)\ldots v(i_j+i_{j+1}-1)\ldots v(i_{n-1})v(1)v(0)]\\
    &+\sum^{n-1}_{j=1}(-1)^{j}i_j[v(i_1)\ldots v(i_j-1)\ldots v(i_{n-1})v(1)]+\sum^{n-1}_{j=1}(-1)^{j}(i_j-1)[v(i_1)\ldots v(i_{n-1})v(0)]\\
    &+(-1)^n[v(i_1)v(i_2)\ldots v(i_{n-1})v(0)]+(-1)^nv(0)[v(i_1)v(i_2)\ldots v(i_{n-1})v(1)],
    \end{align*}
    where $i_1,i_2\ldots,i_{n-1}\ge2$.
    \end{theorem}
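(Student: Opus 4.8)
The plan is to read off $\delta_{n+1}$ from the recursive procedure of Section~\ref{Anires.}, writing $\delta_{n+1}=(\delta''_{n+1})^{k}\,\delta'_{n+1}$ and applying it to a chain from Corollary~\ref{cor:U(3)AnickChains}. First I would compute $\delta'_{n+1}[v(i_1)|\dots|v(i_{n+1})]$: the leading term $v(i_1)[v(i_2)|\dots|v(i_{n+1})]$ is a $\Lambda$-multiple of a chain and gives the first summand, while for each interior index $j=1,\dots,n$ I must replace $v(i_j)v(i_{j+1})$ by its $V$-reduced form $s_j$. Since $i_1,\dots,i_n\ge 2$ every such product is a leading word, so \eqref{eq:u3-gsb} applies and $s_j$ splits into three pieces: a $v(1)v(i_j+i_{j+1}-1)$-piece, a $v(0)v(i_j+i_{j+1})$-piece, and a single generator $v(i_j+i_{j+1}-1)$, weighted by the three rational coefficients in the statement.

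The core of the argument is to track how $\delta''_{n+1}$ returns each piece to the chain basis. The single-generator piece is already a chain and yields the fourth summand directly. The $v(1)$- and $v(0)$-pieces sit as length-two words in slot $j$ and are genuine defects; here I would show that $\delta''_{n+1}$ transports the auxiliary head $v(1)$ (respectively $v(0)$) one slot leftward at a time toward the module position. Each elementary transport across $v(i_t)$, $t<j$, is controlled by the commutation relation \eqref{eq:u3-defn-loc}, which reads $v(i_t)v(1)=v(1)v(i_t)+(i_t-1)v(i_t)$ and $v(i_t)v(0)=v(0)v(i_t)+i_t\,v(i_t-1)$. The head that reaches the left end becomes a module action $v(1)[\dots]$ or $v(0)[\dots]$, producing the second and sixth summands, while the commutator detached at slot $t$ contributes a chain carrying the scalar $i_t-1$ (respectively $i_t$, with $v(i_t)$ lowered to $v(i_t-1)$), producing the doubly-indexed third and fifth summands; every other tensor generated en route has a head $v(1)v(\cdot)$ or $v(0)v(\cdot)$ that is not an obstruction, hence lies outside the chain set and is annihilated by $\delta''_{n+1}$.

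For the second family, the chains ending in $v(1)v(0)$, the computation runs in parallel for the fusions $j\le n-2$, but the two terminal fusions degenerate and must be treated by hand. The fusion at $j=n-1$ is $v(i_{n-1})v(1)$, the $m=1$ specialization of \eqref{eq:u3-gsb}, in which the $v(0)$-piece drops out and one is left with $v(1)v(i_{n-1})+(i_{n-1}-1)v(i_{n-1})$; the fusion at $j=n$ is $v(1)v(0)$, which is governed not by \eqref{eq:u3-gsb} but by \eqref{eq:u3-gsb-10}, giving $v(0)v(1)+v(0)$. Transporting the $v(0)$ and $v(1)$ arising from these two fusions leftward by \eqref{eq:u3-defn-loc} then produces the two single sums carrying $i_j$ and $i_j-1$ together with the boundary summands $(-1)^{n}v(0)[\dots v(1)]$ and $(-1)^{n}[\dots v(0)]$.

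The step I expect to be the main obstacle is exactly the bookkeeping of the iterate $(\delta''_{n+1})^{k}$. One has to check that the transport of the auxiliary generators terminates after finitely many steps, that every intermediate splitting again lands among the chains of Corollary~\ref{cor:U(3)AnickChains} or is killed by $\delta''_{n+1}$, and above all that the accumulated signs $(-1)^{j}$, $(-1)^{j+1}$ and the nested sums over $t<j$ recombine into precisely the displayed summands with no surviving defect. The two degenerate terminal fusions in the second family are the most delicate point, since there the coefficients of \eqref{eq:u3-gsb} become singular and the relation \eqref{eq:u3-gsb-10} takes over; tracking which single sums and boundary terms they contribute, and with which signs, requires particular care. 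I would organize the whole verification as an induction on $n$, letting the already-established formula for $\delta_n$ govern the transport that occurs while computing $\delta_{n+1}$.
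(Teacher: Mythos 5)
Your proposal follows essentially the same route as the paper's proof: compute $\delta'_{n+1}$ by reducing each product $v(i_j)v(i_{j+1})$ via the Gr\"obner--Shirshov relations \eqref{eq:u3-gsb-10}--\eqref{eq:u3-gsb}, then iterate $\delta''_{n+1}$ so that the auxiliary heads $v(1)$ and $v(0)$ are transported leftward to the module position, picking up the commutator terms $(i_t-1)$ and $i_t$ along the way, with the terminal fusions $v(i_{n-1})v(1)$ and $v(1)v(0)$ of the second family handled as degenerate specializations. This is exactly the paper's argument (which first works out $\delta_2$ and $\delta_3$ explicitly and then performs the same transport bookkeeping in general), so the proposal is correct and not materially different.
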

    \begin{proof}
We can 
evaluate $\delta_{n+1}: \A_{n+1}\to \A_{n}$ by means of the Morse graph as a similar way which is shown in \cite{Akl} (see Figure 1-4) considering that in  \cite{Akl} was founded  $\delta_{n+1}$ for trivial module or via methods explained in Section \ref{Anires.}.    
    
For $n=1$ we have the following set of obstructions
\[
V^{(1)}=\{v(n)v(m), v(1)v(0); n\ge2,m\ge0\}.
\]
Let us  
evaluate $\delta_2: A_{2}\to A_{1}$ by means of the methods explained in Section \ref{Anires.}. Suppose 
$[w] = [v(n)|v(m)]$.
Then calculate 
\[
\begin{aligned}
\delta_2'[n|m] =& \ v(n)[v(m)] - \dfrac{nm}{n+m-1} [v(1)v(n+m-1)] + \dfrac{(n-1)(m-1)}{n+m-1} [v(0)v(n+m)]\\
 -& \ \dfrac{n(n-1)}{n+m-1} [v(n+m-1)];\quad n\ge2, m\ge0.\\
 \end{aligned}
 \]
 Apply $\delta_2''$ to each summand:
 \begin{align*}
\delta_2''([v(1)v(n+m-1)]) = & \ \delta_2' ([v(1)|v(n+m-1)]) + [v(1)v(n+m-1)]\\
= & \ v(1)[v(n+m-1)], \quad i=0,\\
\delta_2''([v(0)v(n+m)]) = & \ \delta_2' ([v(0)|v(n+m)]) + [v(0)v(n+m)]
=v(0)[v(n+m)], \quad i=0,\\
\end{align*}
\begin{align*}
\delta_2[n|m] = & \ v(n)[v(m)] - \dfrac{nm}{n+m-1} v(1) [v(n+m-1)]\\
 + & \ \dfrac{(n-1)(m-1)}{n+m-1} v(0)[v(n+m)]\\
 - & \ \dfrac{n(n-1)}{n+m-1} [v(n+m-1)];\quad n\ge2, m\ge0.
 \end{align*}
 Similar way
 \[
\delta_2[1|0] = v(1)[v(0)] -v(0)[v(1)] - [v(0)].
\]
 For $n=2$ we have
\[
V^{(2)}=\left\{v(n)v(m)v(p), v(n)v(1)v(0); n,m\ge2,p\ge0\right\}.
\]

Suppose 
$[w] = [v(n)|v(1)|v(0)]$.
Then calculate 
\begin{align*}
\delta_3'([v(n)|v(1)|v(0)]) =& \ v(n)[v(1)|v(0)]-[v(1)v(n)|v(0)]\\
-& \ (n-1)[v(n)v(0)]+[v(n)|v(0)v(1)] +[v(n)v(0)].
\end{align*}
Since $[v(1)|v(0)],[v(n)v(0)]\in V^{(1)}$, it is enough 
to calculate 
$(\delta_3'')^k([v(1)v(n)|v(0)])$ and $  (\delta_3'')^k([v(n)|v(0)v(1)]) $ for $k=1,2,\dots $
Here
\begin{align*}
\delta_3''([v(n)|v(0)v(1)])
=& \ -\delta_3'([v(n)|v(0)|v(1)]) + [v(n)|v(0)v(1)] \\
=& \ [v(0)v(n)|v(1)]+n[v(n-1)v(1)], 
\quad i=1.
\end{align*}
Note that 
$[v(n-1)v(1)]\in V^{(1)}$ and calculate 
\begin{align*}
\delta_3''([v(0)v(n)|v(1)])
= & \ \delta_3'([v(0)|v(n)|v(1)])+[v(0)v(n)|v(1)]
=v(0)[v(n)|v(1)],\\
\delta_3''([v(1)v(n)|v(0)]) = & \ \delta_3'([v(1)|v(n)|v(0)])+[v(1)v(n)|v(0)]
=v(1)[v(n)|v(0)],\\
\delta_3[v(n)v(1)v(0)] = & \ v(n)[v(1)v(0)]+n[v(n-1)v(1)]
-v(1)[v(n)v(0)]\\
-& \ (n-2)[v(n)v(0)]+v(0)[v(n)v(1)];\quad n\ge2.
\end{align*}

    Similar way for
    \[
    \begin{aligned}
    \delta_3[v(n)v(m)v(p)]&{}=v(n)[v(m)v(p)]-\frac{nm}{n+m-1}v(1)[v(n+m-1)v(p)]\\
    &+\frac{(n-1)(m-1)}{n+m-1}v(0)[v(n+m)v(p)]-\frac{n(n-1)}{n+m-1}[v(n+m-1)v(p)]\\
    &+\frac{mp}{m+p-1}v(1)[v(n)v(m+p-1)]+\frac{(n-1)mp}{m+p-1}[v(n)v(m+p-1)]\\
    &-\frac{(m-1)(p-1)}{m+p-1}v(0)[v(n)v(m+p)]\\
    &-\frac{n(m-1)(p-1)}{m+p-1}[v(n-1)v(m+p)]\\
    &+\frac{m(m-1)}{m+p-1}[v(n)v(m+p-1)];\quad n,m\ge2, p\ge0.
    \end{aligned}
    \]
   And also for $\delta_{n+1}$. Suppose 
$[w] = [v(i_1)v(i_2)\dots v(i_{n+1})]$; $i_1,\ldots,i_n\ge2, i_{n+1}\ge0$.
Then calculate 
\[
    \begin{aligned}
    \delta'_{n+1}[v(i_1)|v(i_2)|\dots |v(i_{n+1})]&=v(i_1)[v(i_2)|\dots |v(i_{n+1})]\\
    &+\sum^n_{j=1}(-1)^{j}\frac{i_ji_{j+1}}{i_j+i_{j+1}-1}\\
    &\times[v(i_1)|v(i_2)|\ldots|v(1) v(i_j+i_{j+1}-1)|\ldots |v(i_{n+1})]\\
   & +\sum^n_{j=1}(-1)^{j}\frac{i_j(i_j-1)}{i_j+i_{j+1}-1}\\
   &\times[v(i_1)v(i_2)\ldots v(i_j+i_{j+1}-1)\ldots v(i_{n+1})]\\
   & +\sum^n_{j=1}(-1)^{j+1}\frac{(i_j-1)(i_{j+1}-1)}{i_j+i_{j+1}-1}\\
   &\times[v(i_1)|\ldots |v(0)v(i_j+i_{j+1})|\ldots |v(i_{n+1})].
    \end{aligned}
    \]
    Since $[v(i_1)v(i_2)\ldots v(i_j+i_{j+1}-1)\ldots v(i_{n+1})]\in V^{(n)}$, it is enough 
to calculate 
\[
(\delta_{n+1}'')^k([v(i_1)|v(i_2)|\ldots|v(1) v(i_j+i_{j+1}-1)|\ldots |v(i_{n+1})]), \ \mbox{and} 
\]
\[
(\delta_{n+1}'')^k([v(i_1)|\ldots |v(0)v(i_j+i_{j+1})|\ldots |v(i_{n+1})]), 
\]
for $k=1,2,\dots$
Here
\[
\begin{aligned}
& \ \delta_{n+1}''([v(i_1)|v(i_2)|\ldots|v(1) v(i_j+i_{j+1}-1)|\ldots |v(i_{n+1})])\\
=& \ \delta_{n+1}'[v(i_1)|v(i_2)|\ldots|v(1)| v(i_j+i_{j+1}-1)|\ldots |v(i_{n+1})]\\
+& \ (-1)^j[v(i_1)|v(i_2)|\ldots|v(1) v(i_j+i_{j+1}-1)|\ldots |v(i_{n+1})]\\
+& \ (-1)^{j-1}(i_{j-1}-1)[v(i_1)|v(i_2)|\ldots|v(i_{j-1})| v(i_j+i_{j+1}-1)|\ldots |v(i_{n+1})].\\
 \end{aligned}
\]
By repeating this step,
\[
\begin{aligned}
& \ (\delta_{n+1}'')^{j-1}([v(i_1)|v(i_2)|\ldots|v(1) v(i_j+i_{j+1}-1)|\ldots |v(i_{n+1})])\\
=& \ v(1)[v(i_1)|v(i_2)|\ldots|v(i_{j-1})|v(i_j+i_{j+1}-1)|\ldots |v(i_{n+1})]\\
+& \ \sum^{j-1}_{t=1}(-1)^{j}\frac{i_ji_{j+1}}{i_j+i_{j+1}-1}(i_t-1)[v(i_1)v(i_2)\ldots v(i_j+i_{j+1}-1)\ldots v(i_{n+1})].
\end{aligned}
\]
Similar way 
\[
\begin{aligned}
&(\delta_{n+1}'')^{j-1}([v(i_1)|v(i_2)|\ldots|v(0) v(i_j+i_{j+1}-1)|\ldots |v(i_{n+1})])\\
=& \ v(0)[v(i_1)|v(i_2)|\ldots|v(i_{j-1})|v(i_j+i_{j+1})|\ldots |v(i_{n+1})]\\
+& \ \sum^{j-1}_{t=1}(-1)^{j+1}i_t[v(i_1)\ldots v(i_t-1)\ldots v(i_j+i_{j+1})\ldots v(i_{n+1})].
\end{aligned}
\]
Therefore
\[
    \begin{aligned}
    \delta_{n+1}[v(i_1)v(i_2)\dots v(i_{n+1})]&=v(i_1)[v(i_2)\dots v(i_{n+1})]\\
    &+\sum^n_{j=1}(-1)^{j}\frac{i_ji_{j+1}}{i_j+i_{j+1}-1}v(1)\\
    &\times[v(i_1)v(i_2)\ldots v(i_j+i_{j+1}-1)\ldots v(i_{n+1})]\\
    &+\sum^n_{j=2}\sum^{j-1}_{t=1}(-1)^{j}\frac{i_ji_{j+1}}{i_j+i_{j+1}-1}(i_t-1)\\
    &\times[v(i_1)v(i_2)\ldots v(i_j+i_{j+1}-1)\ldots v(i_{n+1})]\\
   & +\sum^n_{j=1}(-1)^{j}\frac{i_j(i_j-1)}{i_j+i_{j+1}-1}[v(i_1)v(i_2)\ldots v(i_j+i_{j+1}-1)\ldots v(i_{n+1})]\\
    &+\sum^n_{j=2}\sum^{j-1}_{t=1}(-1)^{j+1}i_t\frac{(i_j-1)(i_{j+1}-1)}{i_j+i_{j+1}-1}\\
    &\times[v(i_1)\ldots v(i_t-1)\ldots v(i_j+i_{j+1})\ldots v(i_{n+1})]\\
   & +\sum^n_{j=1}(-1)^{j+1}\frac{(i_j-1)(i_{j+1}-1)}{i_j+i_{j+1}-1}v(0)\\
   &\times[v(i_1)\ldots v(i_j+i_{j+1})\ldots v(i_{n+1})].
    \end{aligned}
    \]
    By this method one can find $\delta_{n+1}[v(i_1)v(i_2)\dots v(i_{n-1})v(1)v(0)]$.
\end{proof}
    \subsection{Anick resolution for differential algebras}

Let $C$ be an associative conformal algebra. During the whole of the rest of the paper 
$\Lambda $ represents for 
the augmented algebra  
$\Lambda = \mathcal A_+ \oplus \Bbbk 1$,
where 
$\mathcal A_+ = \mathcal A_+(C)$, 
and the augmentation is given by
$\varepsilon (\mathcal A_+)=0$.
Then $\mathcal A_+=\Lambda /\Bbbk $, 
and
$\Lambda $ has a derivation $\partial $ such that
$\partial (a(n)) = -na(n-1)$, $\partial (1)=0$.

For every $n\ge 1$, there is a $\Bbbk$-linear map 
\[
\partial_n: \mathrm B_n\to \mathrm B_n,
\]
\[
\partial_n(\beta [\alpha_1|\dots |\alpha_n])
= \partial(\beta) [\alpha_1|\dots |\alpha_n]
+\sum\limits_{i=1}^n 
 \beta [\alpha_1 | \dots |\partial(\alpha_i)| \dots |\alpha_n],
\]
$\alpha_i\in \mathcal A_+$, $\beta \in \Lambda $. 
This is not a morphism of complexes, 
but it causes a morphism of dual cochain complexes
which can be assigned to the Anick resolution 
via homotopy.
That is to say, suppose $M$ is a left conformal $C$-module. Thus, 
$M$ is an ``ordinary'' $\Lambda $-module. 
Denote
\[
\mathrm C_{\mathrm B}^n = \Hom _\Lambda (\mathrm B_n, M) \simeq 
\Hom_\Bbbk (A^{\otimes n}, M), 
\]
this is the space of Hochschild cochains. 
The Hochschild differential 
\[
\Delta_{\mathrm B}^n  : \mathrm C_{\mathrm B}^n \to \mathrm C_{\mathrm B}^{n+1} 
\]
is obtained through 
\[
\big (\Delta_{\mathrm B}^n\varphi\big )(\mathbf x) = \varphi \dcobound_{n+1}(\mathbf x), \quad \varphi\in \mathrm C_{\mathrm B}^{n},
\  \mathbf x\in \mathrm B_{n+1}.
\]

Note that the $\Lambda $-module $M$ is endowed
 with a derivation 
also denoted by $\partial$ (the same as in the definition of a conformal module), so that 
$\partial (a(n)u) = -na(n-1)u +a(n)\partial(u)$, for $a\in C$, $u\in M$, 
$n\in \mathbb Z_+$.

Then for every $n\ge 1$ the map
\[
D_{\mathrm B}^n : \mathrm C_{\mathrm B}^n \to \mathrm C_{\mathrm B}^n 
\]
is expressed as
\[
(D_{\mathrm B}^n \varphi)(\mathbf x) = \partial(\varphi (\mathbf x))
-\varphi (\partial_n(\mathbf x)),
\quad 
\varphi\in \mathrm C_{\mathrm B}^{n},
\  \mathbf x\in \mathrm B_{n+1},
\]
and it is a morphism of complexes:
\[
D_{\mathrm B}^{n+1}\Delta^n_{\mathrm B} 
=  \Delta^n_{\mathrm B} D_{\mathrm B}^{n}.
\]

Let us convert the mapping $D^\bullet _{\mathrm B}$ 
from 
$\mathrm C_{\mathrm B}^\bullet $
to the complex 
$\mathrm C_{\mathrm A}^\bullet $
created on the spaces
\[
\mathrm C_{\mathrm A}^n = \Hom_\Lambda (\mathrm A_n, M)\simeq \Hom_{\Bbbk }
(\Bbbk V^{(n-1)}, M)
\]
with the differential $\Delta_{\mathrm A}^n: \mathrm C_{\mathrm A}^n \to \mathrm C_{\mathrm A}^{n+1}$ given by
\[
\Delta^n_{\mathrm A}(\psi) = \psi\dcobound_{n+1}
= \psi \mathrm f_n\dcobound_{n+1} \mathrm g_{n+1}
=\Delta_{\mathrm B}^n(\psi \mathrm f_n) g_{n+1}.
\]

The homotopy equivalence between $\mathrm A_\bullet $
and $\mathrm B_\bullet $
transforms $D_{\mathrm B}^n$
to 
\[
D_{\mathrm A}^n : \mathrm C_{\mathrm A}^n \to \mathrm C_{\mathrm A}^{n}
\]
such that 
\[
D_{\mathrm A}^n\psi = D_{\mathrm B}^n(\psi \mathrm f_n)\mathrm g_n.
\]

For every $\mathbf a\in \mathrm A_n$, 
we have 
\[
(D_{\mathrm A}^n\psi)(\mathbf a)
= \partial (\psi \mathrm f_n (\mathrm g_n (\mathbf a)))
-(\psi\mathrm f_n)(\partial_n(\mathrm g_n(\mathbf a)))
= \partial (\psi (\mathbf a))
-(\psi\mathrm f_n)(\partial_n(\mathrm g_n(\mathbf a))).
\]
\begin{proposition}[\hspace{-0,001cm}\cite{Ak}]
  For a conformal algebra $C$ and a conformal $C$-module $M$, 
  the reduced Hochschild complex 
  $\mathrm C^\bullet (C,M)$ is homotopy equivalent 
  to $\mathrm C_{\mathrm A}^\bullet /D_{\mathrm A}^\bullet \mathrm C_{\mathrm A}^\bullet $.
\end{proposition}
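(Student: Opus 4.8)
The plan is to reduce the assertion, via the comparison theorem of \cite{BKV}, to a statement about two cokernel complexes, and then to transport that statement across the homotopy equivalence of the bar and Anick resolutions by a mapping-cone computation.

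First I would make the identification coming from \cite{BKV} explicit. Computing the ordinary Hochschild complex of $\mathcal A_+(C)$ from the bar resolution identifies $\C^\bullet(\mathcal A_+(C),M)$ with $\mathrm C_{\mathrm B}^\bullet$, and under this identification the operator $\partial^*_\bullet$ of \cite{BKV} agrees termwise with $D_{\mathrm B}^\bullet$, since
\[
(\partial^*_n f)(\alpha_1,\dots,\alpha_n)=\partial f(\alpha_1,\dots,\alpha_n)-\sum_{i=1}^n f(\alpha_1,\dots,\partial\alpha_i,\dots,\alpha_n)=(D_{\mathrm B}^n f)(\alpha_1,\dots,\alpha_n).
\]
Hence the isomorphism of \cite{BKV} reads $\C^\bullet(C,M)\cong \mathrm C_{\mathrm B}^\bullet/D_{\mathrm B}^\bullet\mathrm C_{\mathrm B}^\bullet$, the quotient being a complex because $D_{\mathrm B}^\bullet$ is a chain map ($\Delta^n_{\mathrm B}D^n_{\mathrm B}=D^{n+1}_{\mathrm B}\Delta^n_{\mathrm B}$). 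It therefore suffices to prove the homotopy equivalence $\mathrm C_{\mathrm B}^\bullet/D_{\mathrm B}^\bullet\mathrm C_{\mathrm B}^\bullet\simeq \mathrm C_{\mathrm A}^\bullet/D_{\mathrm A}^\bullet\mathrm C_{\mathrm A}^\bullet$.

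Next I would dualize the homotopy equivalence $\mathrm f_\bullet,\mathrm g_\bullet$ between $\mathrm A_\bullet$ and $\mathrm B_\bullet$ to a strong deformation retract of cochain complexes: writing $\mathrm f_n^*\psi=\psi\mathrm f_n$ and $\mathrm g_n^*\varphi=\varphi\mathrm g_n$, one has $\mathrm g^*\mathrm f^*=\mathrm{id}$ on $\mathrm C_{\mathrm A}^\bullet$ and $\mathrm f^*\mathrm g^*-\mathrm{id}=\Delta_{\mathrm B}h+h\Delta_{\mathrm B}$ on $\mathrm C_{\mathrm B}^\bullet$ for a homotopy $h$ obeying the side conditions $h\mathrm f^*=0$, $\mathrm g^*h=0$, $h^2=0$. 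Because the cokernel functor is not homotopy invariant, I would not try to descend $\mathrm f^*,\mathrm g^*$ to the quotients directly; instead I would pass to mapping cones. Grading $\mathcal A_+(C)$ by the index $n$ of the elements $a(n)$, so that $\partial$ is homogeneous of degree $-1$, and using that the finite conformal module $M$ is free over $\Bbbk[\partial]$ (so $\partial_M$ is injective), a descending induction on degree starting from the degree-zero elements $x$ with $\partial x=0$ shows that $D_{\mathrm B}^\bullet$, and by the analogous argument $D_{\mathrm A}^\bullet$, are injective in each degree. Consequently the canonical maps $\mathrm{Cone}(D_{\mathrm B}^\bullet)\to \operatorname{coker}(D_{\mathrm B}^\bullet)$ and $\mathrm{Cone}(D_{\mathrm A}^\bullet)\to\operatorname{coker}(D_{\mathrm A}^\bullet)$ are quasi-isomorphisms, and over the field $\Bbbk$ a quasi-isomorphism of cochain complexes is already a homotopy equivalence. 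Thus it is enough to establish $\mathrm{Cone}(D_{\mathrm B}^\bullet)\simeq\mathrm{Cone}(D_{\mathrm A}^\bullet)$.

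This last equivalence I would obtain from the homological perturbation lemma. Regard $\mathrm{Cone}(D_{\mathrm B}^\bullet)$ as the two-column complex $\mathrm C_{\mathrm B}^\bullet\oplus\mathrm C_{\mathrm B}^\bullet$ carrying $\Delta_{\mathrm B}$ in each column and the off-diagonal term $D_{\mathrm B}^\bullet$ from the second column to the first, i.e. the diagonal complex perturbed by $D_{\mathrm B}^\bullet$. The retract $(\mathrm f^*,\mathrm g^*,h)$ doubles to a retract of this diagonal complex onto $\mathrm C_{\mathrm A}^\bullet\oplus\mathrm C_{\mathrm A}^\bullet$ with doubled homotopy. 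The crucial simplification is that the composite of the doubled homotopy with the perturbation is strictly lower triangular, hence squares to zero, so the perturbation series terminates after a single term: the perturbation lemma then delivers a genuine homotopy equivalence of cones whose transferred off-diagonal term is exactly $\mathrm g^*D_{\mathrm B}^\bullet\mathrm f^*=D_{\mathrm A}^\bullet$, with no higher corrections. This is precisely why the definition $D_{\mathrm A}^n\psi=D_{\mathrm B}^n(\psi\mathrm f_n)\mathrm g_n$ is the right one. Composing the four equivalences yields $\C^\bullet(C,M)\simeq\mathrm C_{\mathrm A}^\bullet/D_{\mathrm A}^\bullet\mathrm C_{\mathrm A}^\bullet$.

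The main obstacle is the genuine non-commutation of the derivation-induced operator $D$ with the combinatorial homotopy maps $\mathrm f,\mathrm g,h$ furnished by the Gr\"obner--Shirshov basis and the Morse matching: since $D$ and $h$ do not commute, $\mathrm f^*$ and $\mathrm g^*$ fail to descend to the cokernels on the nose, and a naive check shows the defect lands in $\operatorname{im}\Delta_{\mathrm A}$ rather than in $\operatorname{im}D_{\mathrm A}$. Routing the comparison through mapping cones converts this defect into a square-zero perturbation, which is exactly what makes the correction terminate and reproduces the clean formula for $D_{\mathrm A}^\bullet$. The subsidiary technical step that must be checked carefully is the degreewise injectivity of $D_{\mathrm B}^\bullet$ and $D_{\mathrm A}^\bullet$, which is what legitimizes replacing the cokernels by cones and is where the freeness of the finite conformal module $M$ over $\Bbbk[\partial]$ enters.
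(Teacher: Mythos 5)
Your overall architecture is consistent with how the paper frames this result: note first that the paper itself contains no proof of the proposition (it is imported from \cite{Ak}); what the paper supplies is exactly your first two steps, namely the \cite{BKV} identification of $\partial^*_\bullet$ with $D^\bullet_{\mathrm B}$ and the transfer $D_{\mathrm A}^n\psi = D_{\mathrm B}^n(\psi\mathrm f_n)\mathrm g_n$ across the dualized bar--Anick homotopy equivalence. Your added mechanism for actually proving the equivalence of the two quotient complexes is legitimate and the key computation is right: with the doubled retract, the off-diagonal perturbation $D_{\mathrm B}$ composed with the doubled homotopy is strictly triangular, so $\delta H\delta=0$, the perturbation series terminates, and the transferred differential is precisely the cone differential of $\mathrm g^*D_{\mathrm B}\mathrm f^*=D_{\mathrm A}$; and over the field $\Bbbk$ quasi-isomorphisms of complexes of vector spaces are indeed homotopy equivalences.

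The genuine gap is the step that licenses replacing cokernels by cones: degreewise injectivity of $D_{\mathrm B}^\bullet$ and $D_{\mathrm A}^\bullet$. You derive it from the claim that ``the finite conformal module $M$ is free over $\Bbbk[\partial]$'', but the proposition is stated for an arbitrary conformal $C$-module, and even finite conformal modules need not be free: they may have $\Bbbk[\partial]$-torsion (on which the conformal action is automatically trivial), and the paper's Corollary~\ref{cor:AllFinite} explicitly works with composition factors that coincide with their torsion. For such $M$ one has $\ker\partial_M\neq 0$, and the base case of your induction collapses: on tuples of minimal degree you only obtain $\partial_M\varphi(\mathbf x)=0$, which no longer forces $\varphi(\mathbf x)=0$. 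Injectivity may well still hold (for torsion $M$ it would follow from surjectivity of the total derivation $\sum_i \mathrm{id}\otimes\cdots\otimes\partial\otimes\cdots\otimes\mathrm{id}$ on $\mathcal A_+(C)^{\otimes n}$, which needs its own argument), but as written your proof covers only the torsion-free case, i.e.\ the modules $M_{(\alpha,\Delta)}$ used in the paper's computations, not the statement in its stated generality. A second, minor, inaccuracy: the index $n$ of $a(n)$ does not define a grading of $\mathcal A_+(C)$ for which $\partial$ is homogeneous --- the defining relations $(\partial a)(n)=-na(n-1)$ and the Gr\"obner--Shirshov reductions mix degrees --- it only defines an exhaustive increasing filtration with $\partial F_d\subseteq F_{d-1}$ and $F_{-1}=0$; your induction survives this weakening, but it should be phrased in terms of the filtration rather than a grading.
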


    \section{Application of Anick resolution on $\mathcal{A}_{+}(U(3))$}
    
    \begin{lemma} [\hspace{-0.001cm}\cite{Ak}]\label{lem:Reduction}
The elements of $\C^n$ are in one-to-one 
correspondence with 
scalar sequences $\alpha_{(i_1,i_2,\ldots,i_n)}$, 
$[i_1|i_2|\ldots |i_n]\in \mathrm A_n$.
\end{lemma}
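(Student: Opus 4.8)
The plan is to replace the reduced complex by the model $\mathrm C_{\mathrm A}^\bullet/D_{\mathrm A}^\bullet\mathrm C_{\mathrm A}^\bullet$ supplied by the preceding Proposition, and to exhibit a canonical scalar parametrization of its degree-$n$ space. It is enough to treat $M=M(\alpha,\Delta)=\Bbbk[\partial]u$ from \eqref{eq:Vir}, since every finite module is a finite direct sum of such modules and the construction below is additive in $M$.

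First I would identify the cochain space. As $\A_n=\Lambda\otimes\Bbbk V^{(n-1)}$ is $\Lambda$-free on the Anick chains, Corollary~\ref{cor:U(3)AnickChains} gives
\[
\mathrm C_{\mathrm A}^n=\Hom_\Lambda(\A_n,M)\simeq\Hom_\Bbbk(\Bbbk V^{(n-1)},M)\simeq\prod_{[i_1|\dots|i_n]\in\A_n}\Bbbk[\partial]u ,
\]
so a cochain $\psi$ is precisely a family of polynomials $\psi([i_1|\dots|i_n])=p_{(i_1,\dots,i_n)}(\partial)u$. I would then record the module action read off from \eqref{eq:Vir}: expanding $(v\oo\lambda u)=(\alpha+\partial+\Delta\lambda)u$ yields $v(0)u=(\alpha+\partial)u$, $v(1)u=\Delta u$ and $v(k)u=0$ for $k\ge2$. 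Thus, among the generators, only $v(0)$ moves the $\partial$-adic order of a value, and it raises it by at most one.

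Next I would analyze $D_{\mathrm A}^n$ through the $\partial$-adic filtration of $M$ (with $\deg_\partial(\partial^k u)=k$). From
\[
(D_{\mathrm A}^n\psi)(\mathbf a)=\partial(\psi(\mathbf a))-(\psi\mathrm f_n)(\partial_n(\mathrm g_n(\mathbf a)))
\]
the first summand is multiplication by $\partial$, raising $\deg_\partial$ by exactly one on every component, while the goal is to show the correction $(\psi\mathrm f_n)(\partial_n(\mathrm g_n(\mathbf a)))$ does \emph{not} raise $\deg_\partial$. Granting this, $D_{\mathrm A}^n=(\text{mult.\ by }\partial)+N$ with $N$ non-raising, so the leading symbol of $D_{\mathrm A}^n$ is injective and its cokernel is the $\deg_\partial=0$ layer. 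Concretely, working inside each homogeneous component of the natural grading of $\mathcal A_+(U(3))$ by $\sum i_j$ (where the components are finite-dimensional), one eliminates the top-order terms of $\psi$ one step at a time by subtracting a suitable $D_{\mathrm A}^n\chi$; existence of a representative with $\psi_0([i_1|\dots|i_n])\in\Bbbk u$ follows because multiplication by $\partial$ is onto each positive layer, and uniqueness follows from injectivity of the leading symbol. Writing $\psi_0([i_1|\dots|i_n])=\alpha_{(i_1,\dots,i_n)}u$ then gives the asserted bijection between $\C^n$ and scalar sequences indexed by the Anick chains.

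The main obstacle is the degree claim in the middle step: that feeding $\partial_n(\mathrm g_n(\mathbf a))$ through $\psi\mathrm f_n$ cannot raise $\deg_\partial$ beyond the single $\partial$ contributed by $\partial(\psi(\mathbf a))$. This is where the explicit homotopy maps $\mathrm f_n,\mathrm g_n$ and the Anick differential of Theorem~\ref{Anickdiff} must be used: the derivation $\partial_n$ acts through $\partial v(k)=-kv(k-1)$ and hence only lowers chain indices, and the only degree-raising module action is $v(0)\mapsto\alpha+\partial$ (raising by one), while $v(k)$ for $k\ge2$ annihilate $M$; one must check that these effects never combine to exceed that bound. Once this triangularity is established the cokernel computation is routine, and the statement follows, extending to an arbitrary finite module by applying the argument to each summand $M(\alpha_s,\Delta_s)$.
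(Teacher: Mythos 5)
The paper does not actually prove this lemma: it is imported from \cite{Ak}, where the argument is precisely the filtration argument you outline --- identify $\mathrm C_{\mathrm A}^n$ with families of polynomials $p_{(i_1,\dots,i_n)}(\partial)\,u$ and show that $D_{\mathrm A}^n\psi=\partial\psi+N\psi$ with $N$ not raising the $\partial$-degree, so that every class modulo $D_{\mathrm A}^n\mathrm C_{\mathrm A}^n$ has a unique representative with values in $\Bbbk u$. So your architecture is the intended one. The difficulty is that the decisive step is exactly the one you leave open, and the justification you offer for it is false. The operators $v(k)$, $k\ge 2$, annihilate the generator $u$ but not the module $M=\Bbbk[\partial]u$: from the conformal module axiom $a(n)\partial m=\partial(a(n)m)+na(n-1)m$ one gets, e.g., $v(2)\partial u=2v(1)u=2\Delta u\ne 0$, and in general $v(k)$ maps the degree-$j$ layer into degree $\le j-k+1$. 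The corrected bookkeeping still says that only $v(0)$ raises the degree (by one), but precisely for that reason the non-raising of the correction term $(\psi\mathrm f_n)(\partial_n(\mathrm g_n(\mathbf a)))$ is not automatic: $v(0)$ does occur as a $\Lambda$-coefficient in the Anick machinery (see the summand $v(0)[v(n)v(1)]$ in $\delta_3[v(n)v(1)v(0)]$ of Theorem~\ref{Anickdiff}), so you must actually verify that no $v(0)$-coefficient survives in $\mathrm f_n\partial_n\mathrm g_n$, or else control its contribution; otherwise the leading symbol of $D_{\mathrm A}^n$ is not known to be injective and the cokernel is not known to be one copy of $\Bbbk$ per Anick chain. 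This verification amounts to the explicit formulas for $D_{\mathrm A}^n\psi$ that the paper computes chain by chain in Theorems~\ref{them:H^1}--\ref{them:H^n}, where $D\psi$ is visibly $\partial\psi$ plus purely scalar terms; without it the proof is incomplete.

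A second defect is the opening reduction: finite conformal $\Vir$- (hence $U(3)$-) modules are \emph{not} semisimple, so a finite module is not in general a finite direct sum of modules $M_{(\alpha,\Delta)}$ --- it only admits a composition series whose factors may also be torsion or trivial, which is exactly why Corollary~\ref{cor:AllFinite} argues via short exact sequences and the long exact cohomology sequence rather than by additivity. This does not damage the lemma itself, since it is only ever applied to $M=M_{(\alpha,\Delta)}=\Bbbk[\partial]u$, for which ``scalar sequences'' is the meaningful statement; but the claimed direct-sum decomposition should be removed rather than used as a justification.
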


\begin{theorem}\label{them:H^1}
For the conformal module $M_{(\alpha,\Delta)}$ 
over $U(3)$, where $\Delta\ne0$, we have 
\[
\dim_\Bbbk \Homol^1(U(3), M_{(\alpha,\Delta)})
= \begin{cases}
2, &  \Delta  = 1, \alpha= 0 \\
0, & \text{otherwise}. 
  \end{cases}
  \]
\end{theorem}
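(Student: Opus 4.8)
The plan is to compute $\Homol^1(U(3),M_{(\alpha,\Delta)})$ as the first cohomology of the reduced complex $\mathrm C_{\mathrm A}^\bullet/D_{\mathrm A}^\bullet\mathrm C_{\mathrm A}^\bullet$, which is homotopy equivalent to $\mathrm C^\bullet(U(3),M_{(\alpha,\Delta)})$ by the Proposition above; write $\bar{\mathrm C}^n=\mathrm C_{\mathrm A}^n/D_{\mathrm A}^n\mathrm C_{\mathrm A}^n$ for its terms and $\bar\Delta^n$ for the induced differentials. By Lemma~\ref{lem:Reduction} a cochain here is encoded by a scalar sequence indexed by the Anick chains of Corollary~\ref{cor:U(3)AnickChains}. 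In the range we need this means: $\bar{\mathrm C}^0\cong M/\partial M\cong\Bbbk$ (since $M=\Bbbk[\partial]u$ is free of rank one over $\Bbbk[\partial]$); degree-one cochains are sequences $(\alpha_m)_{m\ge 0}$ indexed by the generators $v(m)$; and degree-two cochains are sequences indexed by the $2$-chains $\{[n|m]:n\ge2,\ m\ge0\}\cup\{[1|0]\}$. So I would first fix these three cochain groups together with the two differentials $\bar\Delta^0$ and $\bar\Delta^1$ between them.

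Next I would record the $\Lambda$-action of $U(3)$ on $M_{(\alpha,\Delta)}$ read off from \eqref{eq:Vir}: $v(0)u=(\alpha+\partial)u$, $v(1)u=\Delta u$, and $v(n)u=0$ for $n\ge2$. With this the coboundary $\bar\Delta^0$ is immediate: dualizing $\delta_1[v(m)]=v(m)$ sends the generator $u$ of $\bar{\mathrm C}^0$ to the $1$-cochain with $\alpha_0=\alpha$, $\alpha_1=\Delta$, $\alpha_m=0\ (m\ge2)$, so $\im\bar\Delta^0$ is at most one-dimensional. For the cocycle condition I would dualize the Anick differential $\delta_2$ computed inside the proof of Theorem~\ref{Anickdiff}, namely apply $\psi$ to
\begin{multline*}
\delta_2[n|m]=v(n)[v(m)]-\frac{nm}{n+m-1}v(1)[v(n+m-1)]\\
+\frac{(n-1)(m-1)}{n+m-1}v(0)[v(n+m)]-\frac{n(n-1)}{n+m-1}[v(n+m-1)],
\end{multline*}
insert the module action, and reduce the result modulo $D_{\mathrm A}^2$ to extract its scalar entry; the chain $[1|0]$ is handled the same way from $\delta_2[1|0]=v(1)[v(0)]-v(0)[v(1)]-[v(0)]$. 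Because $v(n)u=0$ for $n\ge2$, the leading term $v(n)[v(m)]$ drops out, and the surviving terms give, for each $[n|m]$, one linear equation relating $\alpha_{n+m-1}$ and $\alpha_{n+m}$ with coefficients rational in $n,m,\Delta,\alpha$.

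The core of the argument is then solving this system. Grouping the equations by the total weight $k=n+m$ shows it is heavily over-determined: for fixed $k$ every admissible $n\in\{2,\dots,k\}$ produces a relation between the same pair $\alpha_{k-1},\alpha_k$, so consistency forces the coefficient ratio to be independent of $n$. I expect this to pin down $\alpha_{k-1}=\alpha_k=0$ for almost all $k$ unless $(\Delta,\alpha)$ is special, and to isolate the locus $\Delta=1$, $\alpha=0$ as exactly the place where all the weight-$k$ coefficients degenerate simultaneously and extra solutions appear. I would then treat the low-weight chains $[1|0]$, $v(0)$, $v(1)$ by hand to fix $\alpha_0,\alpha_1$, compute $\dim_\Bbbk\ker\bar\Delta^1$, and subtract $\dim_\Bbbk\im\bar\Delta^0$; the claim is that this difference is $0$ off the special locus and $2$ when $\Delta=1$, $\alpha=0$.

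The main obstacle I anticipate is bookkeeping the reduction modulo $D_{\mathrm A}^\bullet$ correctly: the value $v(0)u=(\alpha+\partial)u$ carries a genuine $\partial$-term, and collapsing cochain values to scalars via $D_{\mathrm A}$ mixes different chains, so one must check that the passage to scalar sequences of Lemma~\ref{lem:Reduction} is applied consistently in both source and target before reading off the linear equations. The second delicate point is the consistency analysis of the over-determined recurrence: one has to verify that the only simultaneous degeneration of all the weight-$k$ coefficients occurs at $\Delta=1$, $\alpha=0$, and that there precisely two independent cocycle classes survive after quotienting by the single coboundary direction.
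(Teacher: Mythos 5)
Your overall route coincides with the paper's: encode a $1$-cochain by scalars $\alpha_m=\varphi[v(m)]$ via Lemma~\ref{lem:Reduction}, dualize the Anick differential $\delta_2$, reduce modulo $D^\bullet$, and solve the resulting linear system grouped by the weight $k=n+m$. However, there are two genuine gaps. First, your description of the coboundary space is incorrect as stated. Applying $\bar\Delta^0$ to $h(1)=\beta u$ gives the raw values $v(0)\beta u=(\alpha+\partial)\beta u$ and $v(1)\beta u=\Delta\beta u$, and the scalar representative in the quotient by $D^1$ is obtained only after subtracting $D^1\psi$ for $\psi[0]=\beta u$, $\psi[n]=0$ ($n\ge1$); since $\partial_1[v(1)]=-[v(0)]$, this subtraction also shifts the entry at $n=1$, and the correctly reduced coboundary is $(\alpha\beta,\ (\Delta-1)\beta,\ 0,\dots)$, not $(\alpha\beta,\ \Delta\beta,\ 0,\dots)$. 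This is not mere bookkeeping: at the critical point $\Delta=1$, $\alpha=0$ the reduced coboundary vanishes identically, which is precisely why $\dim_\Bbbk\Homol^1=2$ there; your representative is nonzero at that point and would yield $2-1=1$.

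Second, the heart of the argument --- the consistency analysis of the over-determined system --- is only asserted (``I expect this to pin down\dots'', ``the claim is\dots''), and the paper's resolution has specific content you would need to supply. After subtracting $D^2\psi$ with $\psi[n|m]=\frac{(n-1)(m-1)}{n+m-1}\alpha_{n+m}$ to kill the $\partial$-terms, the chains $[n|1]$ (for $\alpha=0$) give $-(\Delta+n-2)\alpha_n=0$, which leaves the single exceptional index $n=2-\Delta$ undetermined; that exception is eliminated using the chains $[n-1|2]$, whose equation collapses to $\tfrac{2}{n}\alpha_n=0$. For $\alpha\ne0$ the decisive equations come instead from $m=0$, namely $-\alpha\,\alpha_n=0$ for $n\ge2$ and $(\Delta-1)\alpha_0-\alpha\,\alpha_1=0$, after which every cocycle is seen to be a coboundary. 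Without these steps, and without the corrected coboundary computation above, the proposal establishes neither the vanishing off the locus $\Delta=1$, $\alpha=0$ nor the dimension $2$ on it.
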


We are interested in the space
$\Homol^1(U(3),M) $ which is isomorphic 
to the space of non-coboundary cocycles in $\C^1 = \tilde \C^1/D^1\tilde \C^1$.
Suppose $\varphi \in \tilde \C^1 = \Hom_\Lambda (\A_1, M)$. 
By Lemma~\ref{lem:Reduction} we may assume 
$\varphi [n] = \alpha_n \in \Bbbk $
for $n\ge 0$.
Then the differential $\Delta^1\varphi$ takes the following 
values on the Anick 1-chains

\begin{align*}
(\Delta^1\varphi)[1|0] = & \ \varphi(\delta_2[1|0])=v(1)\alpha_0 -v(0)\alpha_1 - \alpha_0 = \Delta\alpha_0-(\partial+a)\alpha_1 - \alpha_0\\
=& \ (\Delta-1)\alpha_0-(\partial+a)\alpha_1,\\
(\Delta^1\varphi)[n|m]=& \ \varphi(\delta_2[n|m] = v(n)\alpha_m - \dfrac{nm}{n+m-1} v(1)\alpha_{n+m-1}\\
+& \ \dfrac{(n-1)(m-1)}{n+m-1} v(0)\alpha_{n+m} - \dfrac{n(n-1)}{n+m-1} \alpha_{n+m-1} \\
=& \  - \dfrac{nm}{n+m-1} \Delta \alpha_{n+m-1}
 + \dfrac{(n-1)(m-1)}{n+m-1} (\partial+a)\alpha_{n+m}\\
 -& \ \dfrac{n(n-1)}{n+m-1} \alpha_{n+m-1}.
\end{align*}

In order to find the constants that define $\Delta^1(\varphi +D^1\tilde \C^1) \in \C^2$
choose $\psi \in \tilde \C^2$
such that 
\[
\psi[1|0] =-\alpha_1, \quad \psi[n|m] = \dfrac{(n-1)(m-1)}{n+m-1} \alpha_{n+m},
\]
Then
\begin{align}\nonumber
(\Delta^1\varphi-D^2\psi)[n|m]=& \ - \dfrac{nm}{n+m-1} \Delta \alpha_{n+m-1} + \dfrac{(n-1)(m-1)}{n+m-1} a\alpha_{n+m}\\\nonumber
& \ - \dfrac{n(n-1)}{n+m-1} \alpha_{n+m-1}-\dfrac{n(n-2)(m-1)}{n+m-2} \alpha_{n+m-1}\\\nonumber
& \ -\dfrac{m(n-1)(m-2)}{n+m-2} \alpha_{n+m-1};\quad n\ge2, m\ge0,\\
(\Delta^1\varphi - D^2\psi )[1|0] =& \ (\Delta-1)\alpha_0-a\alpha_1,
\label{eq:2-cocycle}
\end{align}
for all $[n|m]\in \mathrm A_2$.
Therefore, $\varphi +D^1\tilde \C^1 $ is a 1-cocycle in $\C^1$
if and only if:

{\sc Case 1:} $\alpha=0$. Put $m=1$ in \eqref{eq:2-cocycle} to obtain 
\begin{align*}
(\Delta^1\varphi - D^2\psi )[1|0] =& \ (\Delta-1)\alpha_0=0,\\
(\Delta^1\varphi - D^2\psi )[n|1] =& \ -(\Delta +n-2)\alpha_n=0, \quad n\ge 2.
\end{align*}

For all other Anick 1-chains $[n|m]$, the desired value is proportional to $\alpha_{n+m-1}$, 
so $\alpha_1$  does not emerge in these expressions.

Therefore, if $\varphi -D^1\psi$ is a cocycle in $\C^1$ then $\alpha_n=0$ for all $n\ge 2$
except, maybe, for $n=2-\Delta$. The latter is impossible for $n\ge 3$ since 
\[
(\Delta^1 \varphi - D^2\psi)[n-1|2] = 
-\dfrac{2(n-1)}{n}\Delta \alpha_n
- \dfrac{(n-1)(n-2)}{n}\alpha_n - (n-3)\alpha_n = \dfrac{2}{n}\alpha_n.
\]

Finally, we obtain the description of cocycles in $\C^1$ for 
various $\Delta$:
\begin{itemize}
    \item $\Delta=1$: $\alpha_0$ and $\alpha_1$ take arbitrary values, $\alpha_n=0$ for $n\ge 2$; 
  \item $\Delta\ne 1$: $\alpha_1$ is arbitrary, $\alpha_0=\alpha_n=0$ for $n\ge 2$.
\end{itemize}

Coboundary cocycles in $\tilde \C^1$ are given by 
$\Delta^0 h$, where $h\in \Hom_\Lambda (\Lambda, M)$. 
Modulo $D^0\tilde C^0$, we may assume $h(1) = \beta u$, $\beta \in \Bbbk $.
Then 
$(\Delta^0h)[n] = v(n)\beta u$. 
Choose $\psi \in \tilde \C^1$ such that 
$\psi[0]=\beta  u$ and 
$\psi[n] = 0$ for $n\ge 1$. 
Then 
\[
(\Delta^0 h - D^1\psi )[n]
=\begin{cases}
 0, & n=0, \\
 (\Delta-1)\beta u, & n=1, \\
 0, & n\ge 2.
\end{cases}
\]
Hence, 
the space of coboundaries in $\C^1$
is 1-dimensional for $\Delta\ne 1$ and zero otherwise.

As a result, 
for the 1st Hochschild cohomology of $U(3)$ we have 
\[
\dim_\Bbbk \Homol^1(U(3), M(0,\Delta))
= \begin{cases}
2, & \Delta  = 1, \\
0, & \text{otherwise}. 
  \end{cases}
\]

{\sc Case 2:} $\alpha\ne0$. Put $m=0$ in \eqref{eq:2-cocycle} to obtain  
\begin{align*}
(\Delta^1\varphi - D^2\psi )[1|0] =& \ (\Delta-1)\alpha_0-a\alpha_1,\\
(\Delta^1\varphi - D^2\psi )[n|0] =& \ -a\alpha_n, \quad n\ge 2.
\end{align*}

If $\delta \varphi -D^1\psi$ is a cocycle in $\C^1$ then for all $n\ge 2$
\[
\begin{gathered}
  \alpha_n=0\quad \mbox{and} \quad
 \alpha_1=\dfrac{\Delta-1}{a}\alpha_0. 
\end{gathered}
\]

Coboundary cocycles in $\tilde \C^1$ are given by 
$\delta_0 h$, where $h\in \Hom_\Lambda (\Lambda, M)$. 
Modulo $D^0\tilde C^0$, we may assume $h(0) = cu$, $c=\frac{\alpha_0}{a} \in \Bbbk $.
Then 
$(\Delta^0h)[0] = v(0)c u=(\partial+a)cu$. 
Choose $\psi \in \tilde \C^1$ such that 
$\psi[0]=c  u$ and 
$\psi[n] = 0$ for $n\ge 1$. 
Then 
\[
(\Delta^0 h - D^1\psi )[n]
=\begin{cases}
 ac=\alpha_0, & n=0, \\
 0, & n\ge 1.
\end{cases}
\]
Hence, 
the space of coboundaries in $\C^1$
is 1-dimensional for $a\ne 0$.

As a result, 
for the 1st Hochschild cohomology of $U(3)$ we have 
\[
\dim_\Bbbk \Homol^1(U(3), M_{(\alpha,\Delta)})
= \begin{cases}
2, & \Delta  = 1, \alpha= 0 \\
0, & \text{otherwise}. 
  \end{cases}
\]

\begin{theorem}\label{thm:4.3} The
cohomology groups of $U(3)$ with the values in $M_{(\alpha,\Delta)}$ where $a\ne0$ and $\Delta \in \Bbbk$  are trivial.
\end{theorem}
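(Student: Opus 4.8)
\emph{Approach.} The plan is to prove that, when $a=\alpha\ne 0$, the reduced Hochschild complex $\C^\bullet(U(3),M_{(\alpha,\Delta)})$ is acyclic in every positive degree, by contracting it with a homotopy built from the action of $v(0)$. Following Lemma~\ref{lem:Reduction} I represent a cochain in $\C^n$ by a scalar sequence $\{\alpha_{(i_1,\dots,i_n)}\}$ indexed by the Anick chains of Corollary~\ref{cor:U(3)AnickChains}, and I compute the coboundary $\Delta^n$ by feeding the Anick differential of Theorem~\ref{Anickdiff} into the module action and reducing modulo $\partial$, exactly as in the proof of Theorem~\ref{them:H^1}. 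The three elementary facts I rely on are that $v(i)u=0$ for $i\ge 2$, that $v(1)u=\Delta u$, and that $v(0)=\partial+a$ becomes multiplication by the nonzero scalar $a$ after reduction modulo $\partial$. In particular the leading summand $v(i_1)[\,\cdot\,]$ of every Anick differential is annihilated, so each value $(\Delta^n\psi)[\,\cdot\,]$ is a finite $\Bbbk$-combination of the $\alpha$'s whose coefficients are rational in the indices and linear in $a$ and $\Delta$.

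\emph{Isolating $a$ by a weight filtration.} First I would record the higher analogue of the identity $(\Delta^1\varphi-D^2\psi)[n|0]=-a\alpha_n$ from Case~2 of Theorem~\ref{them:H^1}: evaluating $\Delta^n\psi$ on a chain whose last entry is $0$, the $v(0)$-term that merges the final pair $(i_n,0)\mapsto i_n$ contributes precisely $(-1)^{n}a\,\alpha_{(i_1,\dots,i_n)}$, reproducing $\psi$ on the truncated chain with an invertible coefficient. To organize this I would filter $\C^\bullet$ by the weight $w=i_1+\dots+i_n$ of a chain. Because the leading summand vanishes, the coboundary splits as $\Delta^n=\Delta_0+(\text{a part strictly lowering }w)$, where the weight-preserving piece $\Delta_0$ is assembled from the $v(0)$-terms of both differentials, each carrying the factor $a$. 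In every cohomological degree the weights are bounded below and each graded component is finite-dimensional, so the filtration is locally finite and its spectral sequence converges; it therefore suffices to show that the associated graded complex $(\C^\bullet,\Delta_0)$ is acyclic.

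\emph{Contracting the associated graded.} The coefficient attached to a merge $(i,j)\mapsto i+j$ is $c(i,j)=\frac{(i-1)(j-1)}{i+j-1}$, and a direct check gives the coassociativity $c(i,j)\,c(i+j,k)=c(j,k)\,c(i,j+k)$, so $\Delta_0$ is a genuine bar-type differential for the merge operation. Moreover $c(i,0)=-1$ is constant, which means that appending a trailing $v(0)$ plays the role of an augmentation, i.e. of an extra degeneracy. I would therefore define the homotopy $h$ on the associated graded by sending the $(n{-}1)$-chain $[i_1|\dots|i_{n-1}]$ to the evaluation of the cochain on $[i_1|\dots|i_{n-1}|0]$ divided by $(-1)^{n-1}a$, and verify $\Delta_0 h+h\Delta_0=\mathrm{id}$; by the diagonal computation the trailing merge supplies the identity term, while the interior merges cancel in pairs by the usual extra-degeneracy argument. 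Transporting this contraction up the spectral sequence yields $\Homol^n(U(3),M_{(\alpha,\Delta)})=0$ for all $n\ge 1$, which is the assertion of the theorem.

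\emph{The main obstacle.} The delicate point is the boundary combinatorics dictated by the two shapes of Anick chains in Corollary~\ref{cor:U(3)AnickChains}. Appending a trailing $0$ sends a generic Type~I chain to another Type~I chain, but it is not defined on chains whose last entry is $0$ or $1$, nor on the Type~II chains ending in $v(1)v(0)$; and these Type~II chains contribute their own family of summands in the second differential of Theorem~\ref{Anickdiff} (the tails $[\dots v(1)v(0)]$, $[\dots v(1)]$ and $[\dots v(0)]$). Making $h$ compatible with both families at once --- so that the extra-degeneracy cancellation still closes up across the $v(1)v(0)$ tail, and the entries equal to $1$ do not destroy the coefficients $c(i_j,i_{j+1})$ --- is where the real work lies, and I expect it to require a separate explicit treatment of the last two slots of a chain grafted onto the uniform ``append $v(0)$'' homotopy used on the interior.
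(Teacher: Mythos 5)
Your strategy rests on exactly the mechanism the paper's proof exploits: after reduction modulo $\partial$, the only weight\nobreakdash-preserving summands of the induced differential are the $v(0)$-merges, each carrying the invertible factor $a$, and since $c(i,0)=-1$ the merge of a trailing pair $(i_n,0)$ reproduces the cochain on the truncated chain up to the unit $(-1)^na$. The paper runs this observation without any filtration or spectral sequence: from the cocycle equations on the chains $[i_1|\dots|i_n|0]$ and $[i_1|\dots|i_{n-1}|1|0]$ it reads off that $a\,\alpha_{(i_1,\dots,i_n)}$ and $a\,\alpha_{(i_1,\dots,i_{n-1},1)}$ are linear combinations of the values $\alpha_{(\dots,0)}$, so a cocycle is determined by its restriction to chains ending in $0$; it then exhibits explicit $\varphi_1\in\tilde\C^{n-1}$, $\psi_1\in\tilde\C^{n}$ with $\beta_{(i_1,\dots,i_{n-1})}=(-1)^{n-1}\alpha_{(i_1,\dots,i_{n-1},0)}/\alpha$ whose coboundary matches that restriction, and concludes. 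Your coassociativity identity $c(i,j)\,c(i+j,k)=c(j,k)\,c(i,j+k)$ and the reading of ``append $v(0)$'' as an extra degeneracy are correct, and would give a more conceptual packaging of the same computation; a spectral sequence is not really needed, since the differential only couples weights $w$ and $w-1$ and a direct induction on weight suffices.

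That said, the proposal as written has a genuine gap, and you have located it yourself: the homotopy identity $\Delta_0h+h\Delta_0=\mathrm{id}$ is verified only on the generic stratum, while the entire content of the theorem sits in the boundary combinatorics. Concretely, ``append $0$'' is a bijection from the chains with last entry $\ge 1$ onto the union of the type~I chains ending in $0$ and the type~II chains ending in $v(1)v(0)$; on the type~II family the weight-preserving part of the differential is not a pure merge differential, because Theorem~\ref{Anickdiff} contributes the extra term $(-1)^nv(0)[v(i_1)\dots v(i_{n-1})v(1)]$, i.e.\ a summand $\pm a\,\alpha_{(i_1,\dots,i_{n-1},1)}$, and $h$ must be set to zero on the image of the append map and the cancellation of the interior merges re-checked against both tail shapes. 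Until that case analysis is carried out --- which is precisely what the explicit formulas in the paper's proof accomplish, including the separate equation determining $\alpha_{(i_1,\dots,i_{n-1},1)}$ and the separate choice of $\psi_1$ on the chains $(i_1,\dots,i_{n-2},1,0)$ --- the argument does not close. The gap is of the ``computation deferred'' kind rather than a wrong idea, but the deferred computation is the proof.
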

\begin{proof}
By Lemma~\ref{lem:Reduction} we may assume 
$\varphi ([v(n)v(m)]) = \alpha_{(n,m)} , \varphi ([v(1)v(0)]) = \alpha_{(1,0)}\in \Bbbk $
for $n\ge2$ and $m\ge0$.
The differential $\Delta^2\varphi$ takes the following 
values on the Anick 2-chains:
    \[
    \begin{aligned}
    (\Delta^2\varphi)[v(n)v(m)v(p)]&{}=-\frac{nm}{n+m-1}\Delta \alpha_{(n+m-1,p)}\\
    &+\frac{(n-1)(m-1)}{n+m-1}(\partial+\alpha)\alpha_{(n+m,p)}-\frac{n(n-1)}{n+m-1}\alpha_{(n+m-1,p)}\\
    &+\frac{mp}{m+p-1}\Delta \alpha_{(n,m+p-1)}+\frac{(n-1)mp}{m+p-1}\alpha_{(n,m+p-1)}\\
    &-\frac{(m-1)(p-1)}{m+p-1}(\partial+\alpha)\alpha_{(n,m+p)}-\frac{n(m-1)(p-1)}{m+p-1}\alpha_{(n-1,m+p)}\\
    &+\frac{m(m-1)}{m+p-1}\alpha_{(n,m+p-1)};\quad n,m\ge2,p\ge0,\\
    (\Delta^2\varphi)[v(n)v(1)v(0)]&=-\Delta \alpha_{(n,0)}-(n-2)\alpha_{(n,0)}+(\partial+\alpha)\alpha_{(n,1)}+n\alpha_{(n-1,1)};\quad n\ge2.
    \end{aligned}
    \]
    \\
   Reduce the result by means of $D^3\psi $, where
$\psi \in \tilde C^3 $ is given by 
    \begin{align*}
    \psi(n,m,p)=& \ \frac{(n-1)(m-1)}{n+m-1}\alpha_{(n+m,p)}-\frac{(m-1)(p-1)}{m+p-1}\alpha_{(n,m+p)};\quad n,m\ge2,p\ge0,\\
   \psi(n,1,0)=& \ \alpha_{(n,1)};\quad n\ge2.
    \end{align*}

Then
    \begin{align*}
  (\Delta^2\varphi-D^3\psi)(n,m,p)=& \ -\frac{nm}{n+m-1}\Delta \alpha_{(n+m-1,p)}\\
    +& \ \frac{(n-1)(m-1)}{n+m-1}\alpha\alpha_{(n+m,p)}-\frac{n(n-1)}{n+m-1}\alpha_{(n+m-1,p)}\\
    +& \ \frac{mp}{m+p-1}\Delta \alpha_{(n,m+p-1)}+\frac{(n-1)mp}{m+p-1}\alpha_{(n,m+p-1)}\\
    -& \ \frac{(m-1)(p-1)}{m+p-1}\alpha\alpha_{(n,m+p)}+\frac{m(m-1)}{m+p-1}\alpha_{(n,m+p-1)}\\
    - & \ \frac{n(n-2)(m-1)}{n+m-2}\alpha_{(n+m-1,p)}-\frac{p(n-1)(m-1)}{n+m-1}\alpha_{(n+m,p-1)}\\
    \end{align*}
    \begin{align*}
  -& \ \frac{m(n-1)(m-2)}{n+m-2}\alpha_{(n+m-1,p)}+\frac{m(m-2)(p-1)}{m+p-2}\alpha_{(n,m+p-1)}\\
  +& \ \frac{p(m-1)(p-2)}{m+p-2}\alpha_{(n,m+p-1)};\quad n,m\ge2,p\ge0,\\ 
(\Delta^2\varphi-D^3\psi)(n,1,0)= & \ -(\Delta+n-2) \alpha_{(n,0)}+\alpha\alpha_{(n,1)};\quad n\ge2. 
  \end{align*}
   Hence, $\varphi +D^2\tilde \C^2$ is a 2-cocycle in $\C^2$
if and only if and for $p=0$
  \begin{align*}
  -\alpha\alpha_{(n,m)}=& \ -\frac{nm}{n+m-1}\Delta \alpha_{(n+m-1,0)}+\frac{(n-1)(m-1)}{n+m-1}\alpha\alpha_{(n+m,0)}-\frac{n(n-1)}{n+m-1}\alpha_{(n+m-1,0)}\\
  -& \ \frac{n(n-2)(m-1)}{n+m-2}\alpha_{(n+m-1,0)}-\frac{m(n-1)(m-2)}{n+m-2}\alpha_{(n+m-1,0)};\quad n,m\ge2,\\
   \alpha\alpha_{(n,1)}=& \ (\Delta+n-2) \alpha_{(n,0)};\quad n\ge2.
    \end{align*}

Therefore, cocycles in $\C^2$ are determined by $\alpha_{(n,0)}$
for $n\ge 2$.

     Choose $\varphi_1\in \tilde C^1, \psi_1\in \tilde C^2$, such that
     \[
\begin{gathered}
\varphi_1[n]=\beta_{n};\quad n\ge0,\quad \psi_1[1|0]=\beta_1,\quad
\psi_1[n|m]= \dfrac{(n-1)(m-1)}{n+m-1} \beta_{n+m};\quad n\ge2,m\ge0
    \end{gathered}
    \]
    where
    \[
    \beta_n=
    \begin{cases}
   \dfrac{-\alpha_{(n,0)}}{\alpha};& n\ge1.\\
   0; & \text{otherwise}.
    \end{cases}
    \]
Then $\Delta^{1}\varphi_1-D^{2}\psi_1$
represents a coboundary in $\C^{2}$, and
    \[
 (\Delta^{1}\varphi_1 - D^2\psi_1 )[n|0] = -\alpha\beta_n=\alpha_{(n,0)}=\varphi(n,0);\quad n\ge1.   
    \]
 Hence, every 2-cocycle is a coboundary.
 
 Similar way, evaluate $\Delta^{n}\varphi - D^{n+1}\psi$:
 \begin{align*}
 (\Delta^{n}\varphi - D^{n+1}\psi )[i_1|i_2|\ldots|i_{n+1}]&=\sum_{j=1}^n\frac{(-1)^ji_ji_{j+1}}{i_j+i_{j+1}-1}\Delta\alpha_{(i_1,\ldots,i_j+i_{j+1}-1,\ldots,i_{n+1})}\\
 & +\sum^n_{j=1}(-1)^{j}\frac{(i_j-1)(i_{j+1}-1)}{i_j+i_{j+1}-1}\alpha\alpha_{(i_1,\ldots,i_j+i_{j+1},\ldots,i_{n+1})}\\
  & +\sum^n_{j=1}(-1)^{j}\frac{i_j(i_j-1)}{i_j+i_{j+1}-1}\alpha_{(i_1,\ldots,i_j+i_{j+1}-1,\ldots,i_{n+1})}\\
  &+\sum^n_{j=2}\sum^{j-1}_{t=1}(-1)^{j}\frac{i_ji_{j+1}}{i_j+i_{j+1}-1}(i_t-1)\alpha_{(i_1,\ldots,i_j+i_{j+1}-1,\ldots,i_{n+1})}\\
  \end{align*}

  \begin{align*}
   &+\sum^{n+1}_{t=3}\sum^{t-2}_{j=1}(-1)^{j}i_t\frac{(i_j-1)(i_{j+1}-1)}{i_j+i_{j+1}-1}\alpha_{(i_1,\ldots ,i_j+i_{j+1},\ldots, i_t-1,\ldots ,i_{n+1})}\\
     &+\sum^n_{t=1}(-1)^{t}i_t\frac{(i_t-2)(i_{t+1}-1)}{i_t+i_{t+1}-2}\alpha_{(i_1,\ldots ,i_t+i_{t+1}-1,\ldots ,i_{n+1})}\\
     &+\sum^{n}_{t=1}(-1)^{t}i_{t+1}\frac{(i_t-1)(i_{t+1}-2)}{i_t+i_{t+1}-2}\alpha_{(i_1,\ldots ,i_t+i_{t+1}-1,\ldots ,i_{n+1})},
     \end{align*}
     
 \begin{align*}
 (\Delta^{n}\varphi - D^{n+1}\psi )[i_1|i_2|\ldots|i_{n-1}|1|0]&=\sum_{j=1}^{n-2}\frac{(-1)^ji_ji_{j+1}}{i_j+i_{j+1}-1}\Delta\alpha_{(i_1,\ldots,i_j+i_{j+1}-1,\ldots,i_{n-1},1,0)}\\
& +(-1)^{n-1}\Delta\alpha_{(i_1,i_2,\ldots,i_{n-1},0)}\\
 & +\sum^{n-2}_{j=1}(-1)^{j}\frac{(i_j-1)(i_{j+1}-1)}{i_j+i_{j+1}-1}\alpha\alpha_{(i_1,\ldots,i_j+i_{j+1},\ldots,i_{n-1},1,0)}\\
 & +(-1)^{n-1}a\alpha_{(i_1,i_2,\ldots,i_{n-1},1)}\\
  & +\sum^{n-2}_{j=1}(-1)^{j}\frac{i_j(i_j-1)}{i_j+i_{j+1}-1}\alpha_{(i_1,\ldots,i_j+i_{j+1}-1,\ldots,i_{n-1},1,0)}\\
  & +(-1)^{n-1}\alpha_{(i_1,i_2,\ldots,i_{n-1},0)}\\
  &+\sum^{n-2}_{j=1}\sum^{j-1}_{t=1}(-1)^{j}\frac{i_ji_{j+1}}{i_j+i_{j+1}-1}(i_t-1)\\
  &\times\alpha_{(i_1,\ldots,i_j+i_{j+1}-1,\ldots,i_{n-1},1,0)}\\
  &+\sum^{n-1}_{t=1}(-1)^{j}(i_t-1)\alpha_{(i_1,i_2,\ldots,i_{n-1},0)}\\
   &+\sum^{n-1}_{t=3}\sum^{t-2}_{j=1}(-1)^{j}i_t\frac{(i_j-1)(i_{j+1}-1)}{i_j+i_{j+1}-1}\\
   &\times\alpha_{(i_1,\ldots ,i_j+i_{j+1},\ldots, i_t-1,\ldots ,i_{n-1},1,0)}\\
&+\sum^{n-2}_{t=1}(-1)^{t}i_t\frac{(i_t-2)(i_{t+1}-1)}{i_t+i_{t+1}-2}\alpha_{(i_1,\ldots ,i_t+i_{t+1}-1,\ldots ,i_{n-1},1,0)}\\
  &+\sum^{n-2}_{t=1}(-1)^{t}i_{t+1}\frac{(i_t-1)(i_{t+1}-2)}{i_t+i_{t+1}-2}\alpha_{(i_1,\ldots ,i_t+i_{t+1}-1,\ldots ,i_{n-1},1,0)},
 \end{align*}
where $[i_1|i_2|\ldots|i_n|i_{n+1}]\in V^{n}$. We have that $\varphi - D^{n}\tilde \C^{n} $ is a $n$-cocycle in $\C^n$
if and only if 
 \begin{align*}
(-1)^n\alpha\alpha_{(i_1,i_2,\ldots,i_{n})}&=\sum_{j=1}^{n-1}\frac{(-1)^ji_ji_{j+1}}{i_j+i_{j+1}-1}\Delta\alpha_{(i_1,\ldots,i_j+i_{j+1}-1,\ldots,i_n,0)}\\
 & +\sum^{n-1}_{j=1}(-1)^{j}\frac{(i_j-1)(i_{j+1}-1)}{i_j+i_{j+1}-1}\alpha\alpha_{(i_1,\ldots,i_j+i_{j+1},\ldots,i_n,0)}\\
  & +\sum^n_{j=1}(-1)^{j}\frac{i_j(i_j-1)}{i_j+i_{j+1}-1}\alpha_{(i_1,\ldots,i_j+i_{j+1}-1,\ldots,i_n,0)}\\
  &+\sum^{n-1}_{j=2}\sum^{j-1}_{t=1}(-1)^{j}\frac{i_ji_{j+1}}{i_j+i_{j+1}-1}(i_t-1)\alpha_{(i_1,\ldots,i_j+i_{j+1}-1,\ldots,i_n,0)}\\
   &+\sum^{n}_{t=3}\sum^{t-2}_{j=1}(-1)^{j}i_t\frac{(i_j-1)(i_{j+1}-1)}{i_j+i_{j+1}-1}\alpha_{(i_1,\ldots ,i_j+i_{j+1},\ldots, i_t-1,\ldots ,i_n,0)}\\
     &+\sum^{n-1}_{t=1}(-1)^{t}i_t\frac{(i_t-2)(i_{t+1}-1)}{i_t+i_{t+1}-2}\alpha_{(i_1,\ldots ,i_t+i_{t+1}-1,\ldots ,i_{n+1})}\\
     &+\sum^{n-1}_{t=1}(-1)^{t}i_{t+1}\frac{(i_t-1)(i_{t+1}-2)}{i_t+i_{t+1}-2}\alpha_{(i_1,\ldots ,i_t+i_{t+1}-1,\ldots ,i_n,0)},
     \end{align*}
 \begin{align*}
 (-1)^n\alpha\alpha_{(i_1,i_2,\ldots,i_{n-1},1)}&=\sum_{j=1}^{n-2}\frac{(-1)^ji_ji_{j+1}}{i_j+i_{j+1}-1}\Delta\alpha_{(i_1,\ldots,i_j+i_{j+1}-1,\ldots,i_{n-1},1,0)}\\
& +(-1)^{n-1}\Delta\alpha_{(i_1,i_2,\ldots,i_{n-1},0)}\\
 & +\sum^{n-2}_{j=1}(-1)^{j}\frac{(i_j-1)(i_{j+1}-1)}{i_j+i_{j+1}-1}\alpha\alpha_{(i_1,\ldots,i_j+i_{j+1},\ldots,i_{n-1},1,0)}\\
  & +\sum^{n-2}_{j=1}(-1)^{j}\frac{i_j(i_j-1)}{i_j+i_{j+1}-1}\alpha_{(i_1,\ldots,i_j+i_{j+1}-1,\ldots,i_{n-1},1,0)}\\
  & +(-1)^{n-1}\alpha_{(i_1,i_2,\ldots,i_{n-1},0)}\\
  &+\sum^{n-2}_{j=1}\sum^{j-1}_{t=1}(-1)^{j}\frac{i_ji_{j+1}}{i_j+i_{j+1}-1}(i_t-1)\alpha_{(i_1,\ldots,i_j+i_{j+1}-1,\ldots,i_{n-1},1,0)}\\
  &+\sum^{n-1}_{t=1}(-1)^{j}(i_t-1)\alpha_{(i_1,i_2,\ldots,i_{n-1},0)}\\
  \end{align*}
  \begin{align*}
   &+\sum^{n-1}_{t=3}\sum^{t-2}_{j=1}(-1)^{j}i_t\frac{(i_j-1)(i_{j+1}-1)}{i_j+i_{j+1}-1}\alpha_{(i_1,\ldots ,i_j+i_{j+1},\ldots, i_t-1,\ldots ,i_{n-1},1,0)}\\
        &+\sum^{n-2}_{t=1}(-1)^{t}i_t\frac{(i_t-2)(i_{t+1}-1)}{i_t+i_{t+1}-2}\alpha_{(i_1,\ldots ,i_t+i_{t+1}-1,\ldots ,i_{n-1},1,0)}\\
     &+\sum^{n-2}_{t=1}(-1)^{t}i_{t+1}\frac{(i_t-1)(i_{t+1}-2)}{i_t+i_{t+1}-2}\alpha_{(i_1,\ldots ,i_t+i_{t+1}-1,\ldots ,i_{n-1},1,0)}.
 \end{align*}
 
 Therefore, cocycles in $\C^n$ are determined by $\alpha_{(i_1,i_2,\ldots , ,i_{n-1},0)}$ where  $i_1,\ldots,i_{n-2}\ge2$, $i_{n-1}\ge1$. 
 
     Choose $\varphi_1\in \tilde \C^{n-1}, \psi_1\in \tilde \C^n$, such that
     \begin{align*}
     \varphi_1(i_1,i_2,\ldots,i_{n-1})=& \ \beta_{(i_1,i_2,\ldots,i_{n-1})}\\
     \psi_1(i_1,i_2,\ldots,i_{n})=& \ \sum^{n-1}_{j=1}(-1)^{j}\frac{(i_j-1)(i_{j+1}-1)}{i_j+i_{j+1}-1}\beta_{(i_1,\ldots,i_j+i_{j+1},\ldots,i_n)}\\
     \psi_1(i_1,i_2,\ldots,i_{n-2},1,0){}=& \ \sum^{n-2}_{j=1}(-1)^{j}\frac{(i_j-1)(i_{j+1}-1)}{i_j+i_{j+1}-1}\beta_{(i_1,\ldots,i_j+i_{j+1},\ldots,i_{n-2},1,0)}\\
     +& \ (-1)^{n-1}\beta_{(i_1,i_2,\ldots,i_{n-1},1)},
     \end{align*}
where
    \[
    \beta_{(i_1,i_2,\ldots,i_{n-1})}=
    \begin{cases}
   \dfrac{(-1)^{n-1}\alpha_{(i_1,i_2,\ldots,i_{n-1},0)}}{\alpha};& i_1,\ldots,i_{n-2}\ge2, i_{n-1}\ge1\\
   0; & \text{otherwise}.
    \end{cases}
    \]
    
Then $\Delta^{n-1}\varphi_1-D^{n}\psi_1$
represents a coboundary in $C^{n}$, and for all $ i_1,\ldots,i_{n-2}\ge2 $ and $ i_{n-1}\ge1$, we get
    \begin{align*}
 (\Delta^{n-1}\varphi_1 - D^n\psi_1 )(i_1,i_2,\ldots,i_{n-1},0)& = (-1)^{n-1}\alpha\alpha_{(i_1,i_2,\ldots,i_{n-1})}=\alpha_{(i_1,i_2,\ldots,i_{n-1},0)}\\
 &=\varphi_{(i_1,i_2,\ldots,i_{n-1},0)}.   
    \end{align*}
    
 Hence, the element 
$\varphi -D^{n}\tilde \C^{n}$
is a coboundary in $\C^{n}$.
 \end{proof}
 
 \begin{theorem}\label{them:H^2}
 For the conformal module $M_{(0,\Delta)}$ where $\Delta\ne0$ over $U(3)$, we have 
 \[\dim_\Bbbk\Homol^2(U(3),M_{(0,\Delta)})=
    \begin{cases}
     1, &\Delta=1 \\
    0, & \Delta\ne0,1.
    \end{cases}
    \]
    \end{theorem}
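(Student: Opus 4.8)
The plan is to carry out in degree two, with module parameter $\alpha=0$, the scheme used for $\Homol^1$ in Theorem~\ref{them:H^1} and for $\alpha\ne0$ in Theorem~\ref{thm:4.3}. By Lemma~\ref{lem:Reduction} a class in $\C^2$ is a scalar family $\{\alpha_{(n,m)}\}_{n\ge2,\,m\ge0}$ together with $\alpha_{(1,0)}$, and on $M_{(0,\Delta)}$ the generators act by $v(0)=\partial$, $v(1)=\Delta$, and $v(k)=0$ for $k\ge2$. First I would apply the differential $\delta_3$ of Theorem~\ref{Anickdiff} to evaluate $\Delta^2\varphi$ on the Anick $2$-chains $[n|m|p]$ and $[n|1|0]$, then subtract the auxiliary $\psi\in\tilde\C^3$ already used in Theorem~\ref{thm:4.3} to remove every $\partial$-contribution. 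Setting $\alpha=0$ in those formulas turns $\Delta^2\varphi-D^3\psi$ into a purely scalar $3$-cochain, whose vanishing is the cocycle condition in the reduced complex.

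From the $[n|1|0]$ chains I read off condition (I): $(\Delta+n-2)\,\alpha_{(n,0)}=0$. Putting $p=0$ in the $[n|m|p]$ relation, the two terms proportional to $\alpha_{(n,m-1)}$ cancel and what remains is $C(n,m)\,\alpha_{(n+m-1,0)}=0$, where
\[
 C(n,m)=-\frac{nm\Delta+n(n-1)}{n+m-1}-\frac{n(n-2)(m-1)+m(n-1)(m-2)}{n+m-2}.
\]
Combining (I) with these $p=0$ relations, and killing the one possibly exceptional first index $n=2-\Delta$ by a suitable chain exactly as the $[n-1|2]$ computation does in Theorem~\ref{them:H^1}, I would conclude $\alpha_{(n,0)}=0$ for all $n\ge2$. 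Putting $p=1$ collapses the same relation to the key recursion
\[
 (\Delta+n+m-3)\,\alpha_{(n,m)}+C(n,m)\,\alpha_{(n+m-1,1)}=0,\qquad n,m\ge2,
\]
so that whenever $\Delta+n+m-3\ne0$ every second-index value $\alpha_{(n,m)}$ with $m\ge2$ is determined by the second-index-one values $\alpha_{(k,1)}$; a one-line check gives the compatible identity $C(k,1)=-(\Delta+k-2)$.

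Then I would compute the coboundaries. Reusing the reduced $1$-differential of Theorem~\ref{them:H^1} with $\alpha=0$, the coboundary of $\varphi_1$ with $\varphi_1[k]=\beta_k$ has value $C(n,m)\beta_{n+m-1}$ on $[n|m]$ and $(\Delta-1)\beta_0$ on $[1|0]$; in particular it shifts $\alpha_{(k,1)}$ by $-(\Delta+k-2)\beta_k$ and $\alpha_{(1,0)}$ by $(\Delta-1)\beta_0$, and nothing else produces an $[1|0]$-component. Hence modulo coboundaries every non-exceptional $\alpha_{(k,1)}$ (those with $\Delta+k-2\ne0$) can be set to $0$, the recursion then forces the attached $\alpha_{(n,m)}$, $m\ge2$, to vanish, and the only irremovable parameter is $\alpha_{(1,0)}$, which is a coboundary iff $\Delta\ne1$. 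This already yields $\dim_\Bbbk\Homol^2=1$ at $\Delta=1$ (where $2-\Delta=1$, so no exceptional index occurs) and reduces the case $\Delta\ne0,1$ to the single value $k_0=2-\Delta$, which is relevant only when it is an integer $\ge2$, i.e.\ $\Delta\in\{-1,-2,\dots\}$.

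The hard part will be disposing of this exceptional locus. At $k_0=2-\Delta$ the recursion degenerates: its $p=1$ instance reads $C(n,m)\,\alpha_{(k_0,1)}=0$ for $n+m-1=k_0$, which kills $\alpha_{(k_0,1)}$ as soon as $C(n,m)\ne0$ for some admissible $n+m=k_0+1$ (a short explicit evaluation, e.g.\ $C(2,2)\ne0$ at $\Delta=-1$), but leaves the $k_0-2$ locus variables $\alpha_{(n,m)}$ with $n+m=k_0+1,\ m\ge2$ undetermined. Since for $k_0\ge4$ the single coboundary direction $\beta_{k_0}$ cannot remove all of them, the remaining relations must come from the $p\ge2$ instances of the $[n|m|p]$ equation, in which these locus variables reappear; the genuine technical burden is to show that these higher relations, together with the one coboundary direction, annihilate the whole locus, so that it contributes nothing to cohomology. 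Granting this, the cocycle space equals the coboundary space plus $\langle\alpha_{(1,0)}\rangle$, and taking the quotient gives $\dim_\Bbbk\Homol^2(U(3),M_{(0,\Delta)})=1$ for $\Delta=1$ and $0$ for $\Delta\ne0,1$, as stated.
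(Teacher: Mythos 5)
Your setup, reduction, and bookkeeping match the paper's proof almost exactly: the same auxiliary $\psi$ removes the $\partial$-terms, the $[n|1|0]$ chains give $(\Delta+n-2)\alpha_{(n,0)}=0$, the $p=0$ chains give $C(n,m)\alpha_{(n+m-1,0)}=0$ (with your $C(n,m)$ agreeing with the coefficient in the paper's displayed formulas, including the cancellation of the two $\alpha_{(n,m-1)}$ terms), the $p=1$ chains give the recursion $(\Delta+n+m-3)\alpha_{(n,m)}+C(n,m)\alpha_{(n+m-1,1)}=0$, the identity $C(k,1)=-(\Delta+k-2)$ is correct, and the coboundary analysis of $\alpha_{(k,1)}$ and $\alpha_{(1,0)}$ is the paper's. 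The conclusion for $\Delta=1$ and for $\Delta\notin\mathbb Z_{\le -1}\cup\{0,1\}$ is therefore complete as you present it.

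However, the argument is not finished: for the exceptional locus $\Delta=3-n-m$ (i.e.\ $\Delta$ a negative integer, $k_0=2-\Delta\ge3$) you explicitly write ``Granting this,'' and that granted step is precisely where the paper does its remaining work. Concretely, the paper evaluates the cocycle condition on the chains $[n_1|m_1-1|2]$ with $n_1+m_1=k_0+1$, $m_1\ge3$, and obtains
\[
-\frac{2}{m_1}\,\alpha_{(n_1,m_1)}
= \Bigl(-\frac{n_1(m_1-1)}{n_1+m_1-2}\Delta-\frac{n_1(n_1-1)}{n_1+m_1-2}
-\frac{n_1(n_1-2)(m_1-2)}{n_1+m_1-3}-\frac{(m_1-1)(n_1-1)(m_1-3)}{n_1+m_1-3}\Bigr)\alpha_{(k_0-1,2)},
\]
so every locus variable $\alpha_{(n_1,m_1)}$ is a fixed multiple of the single value $\alpha_{(k_0-1,2)}$; that one value is then removed by the coboundary direction $\beta_{k_0}$, which acts on $[k_0-1|2]$ by $-\tfrac{2}{k_0}\beta_{k_0}$ and acts trivially on $[k_0|1]$ there (since $\Delta+k_0-2=0$), so it is genuinely free for this purpose. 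Your proposal correctly points at the $p\ge2$ relations as the source of the missing constraints, but without carrying out this computation (and checking the nonvanishing of the relevant coefficient, which the paper does via $n_1m_1(m_1+1)+m_1(m_1-1)(m_1-2)>0$ to kill $\alpha_{(k_0,1)}$), the vanishing of $\Homol^2$ for $\Delta\in\{-1,-2,\dots\}$ is asserted rather than proved. The gap is fillable and your outline is compatible with how the paper fills it, but as written the proof is incomplete exactly on the hardest case.
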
  
    
    \begin{proof}
  For $\alpha=0$, the differential $\Delta^{2}\varphi$ takes the following 
values on the Anick 2-chains:
   
   \[
    \begin{aligned}
    (\Delta^{2}\varphi)[v(n)v(m)v(p)]&{}=-\frac{nm}{n+m-1}\Delta \alpha_{(n+m-1,p)}\\
    &+\frac{(n-1)(m-1)}{n+m-1}\partial\alpha_{(n+m,p)}-\frac{n(n-1)}{n+m-1}\alpha_{(n+m-1,p)}\\
    &+\frac{mp}{m+p-1}\Delta \alpha_{(n,m+p-1)}+\frac{(n-1)mp}{m+p-1}\alpha_{(n,m+p-1)}\\
    &-\frac{(m-1)(p-1)}{m+p-1}\partial\alpha_{(n,m+p)}-\frac{n(m-1)(p-1)}{m+p-1}\alpha_{(n-1,m+p)}\\
    &+\frac{m(m-1)}{m+p-1}\alpha_{(n,m+p-1)};\quad n,m\ge2, p\ge0,\\
    (\Delta^{2}\varphi)[v(n)v(1)v(0)]&{}=-\Delta \alpha_{(n,0)}-(n-2)\alpha_{(n,0)}+\partial\alpha_{(n,1)}+n\alpha_{(n-1,1)};\quad n\ge2.
    \end{aligned}
    \]
   Reduce the result by means of $D^3\psi $, where
$\psi \in \tilde C^3 $ is given by 
    \begin{align*}
    \psi(n,m,p)=& \ \frac{(n-1)(m-1)}{n+m-1}\alpha_{(n+m,p)}-\frac{(m-1)(p-1)}{m+p-1}\alpha_{(n,m+p)};\quad n,m\ge2, p\ge0,\\
   \psi(n,1,0)=& \ \alpha_{(n,1)};\quad n\ge2.
    \end{align*}
 Namely, 
    \begin{align*}
  (\Delta^2\varphi-D^3\psi)(n,m,p)=& \ -\frac{nm}{n+m-1}\Delta \alpha_{(n+m-1,p)}-\frac{n(n-1)}{n+m-1}\alpha_{(n+m-1,p)}\\
 +& \ \frac{mp}{m+p-1}\Delta \alpha_{(n,m+p-1)}+\frac{(n-1)mp}{m+p-1}\alpha_{(n,m+p-1)}\\
 +& \ \frac{m(m-1)}{m+p-1}\alpha_{(n,m+p-1)}
 -\frac{n(n-2)(m-1)}{n+m-2}\alpha_{(n+m-1,p)}\\
 -& \ \frac{m(n-1)(m-2)}{n+m-2}\alpha_{(n+m-1,p)}+\frac{m(m-2)(p-1)}{m+p-2}\alpha_{(n,m+p-1)}\\
  -& \ \frac{p(n-1)(m-1)}{n+m-1}\alpha_{(n+m,p-1)}+\frac{p(m-1)(p-2)}{m+p-2}\alpha_{(n,m+p-1)};\\
  & \ n,m\ge2, p\ge0, (m,p)\ne(2,0),\\
(\Delta^2\varphi-D^3\psi)(n,2,0)=& \ -\frac{2n}{n+1}\Delta \alpha_{(n+1,0)}-\frac{n(n-1)}{n+1}\alpha_{(n+1,0)}-(n-2)\alpha_{(n+1,0)},\\
(\Delta^2\varphi-D^3\psi)(n,1,0)=& \ -(\Delta+n-2) \alpha_{(n,0)};\quad n\ge2.
    \end{align*}
    
  Suppose $\varphi - D^{3}\tilde \C^{2} $
is a cocycle in $\C^{2}$. Since
\[
(\Delta^2 \varphi -D^3 \psi)(n,1,0)=0,\quad (\Delta^2\varphi-D^3\psi)(n-1,2,0)=0,\quad (\Delta^2\varphi-D^3\psi)(n,m,1)=0,
\]
therefore
\[
\alpha_{(2,0)}=0,\quad \alpha_{(n,0)}=0;\quad n\ge3,
\]
    \begin{align}\nonumber
  -(\Delta+n+m-3)\alpha_{(n,m)}=& \ -\frac{nm}{n+m-1}\Delta \alpha_{(n+m-1,1)}\\\nonumber
  -& \ \frac{n(n-1)}{n+m-1}\alpha_{(n+m-1,1)}-\frac{n(n-2)(m-1)}{n+m-2}\alpha_{(n+m-1,1)}\\
  -& \ \frac{m(n-1)(m-2)}{n+m-2}\alpha_{(n+m-1,1)};\quad n,m\ge2.
 \label{2-cocycle}
   \end{align}

Choose $
\varphi_1\in \tilde C^1, \psi_1 \in \tilde C^2
$
such that 
\[
\varphi_1[n] =\beta_n,\quad \psi_1[1|0] =\beta_1, \quad \psi_1[n|m] = \dfrac{(n-1)(m-1)}{n+m-1} \beta_{n+m},
\]
and 
\begin{align*}
(\Delta^1\varphi_1-D^2\psi_1)[n|m]=& \ - \dfrac{nm}{n+m-1} \Delta \beta_{n+m-1}\\
 -& \ \dfrac{n(n-1)}{n+m-1} \beta_{n+m-1}-\dfrac{n(n-2)(m-1)}{n+m-2} \beta_{n+m-1}\\
 -& \ \dfrac{m(n-1)(m-2)}{n+m-2} \beta_{n+m-1};\quad n\ge2, m\ge0,\\
(\Delta^1\varphi_1 - D^2\psi_1 )[1|0] =& \ (\Delta-1)\beta_0,
\end{align*}
where
\[\beta_{k}=
    \begin{cases}
    \dfrac{\alpha_{(1,0)}}{\Delta-1},& k=0, \Delta\ne1,\\ \\
    -\dfrac{\alpha_{(2,1)}}{\Delta},& k=2, \Delta\ne0,\\ \\
     -\dfrac{\alpha_{(k,1)}}{\Delta+k-2}, & k\ge3,k\ne 2-\Delta, \\ \\
    -\dfrac{k}{2}\alpha_{(k-1,2)}, & k\ge3,k=2-\Delta.
    \end{cases}
    \]
 Then $\Delta^1\varphi_1 $ is a coboundary in $\tilde \C^2$, and
 \begin{align*}
     (\Delta^1\varphi_1 - D^2\psi_1 )[2|1] =& \ -\Delta \beta_2=\alpha_{(2,1)}=\varphi(2,1);\quad \Delta\ne0,\\
(\Delta^1\varphi_1 - D^2\psi_1 )[1|0] =& \ (\Delta-1)\beta_0=\alpha_{(1,0)}=\varphi(1,0);\quad \Delta\ne1,\\
(\Delta^1\varphi_1 - D^2\psi_1 )[k|1] =& \ (-\Delta -k+2)\beta_k=\alpha_{(k,1)}=\varphi(k,1);\quad k\ne 2-\Delta,\\
(\Delta^1\varphi_1-D^2\psi_1)[k-1|2]=& \ - \dfrac{2(k-1)}{k} \Delta \beta_{k} - \dfrac{(k-1)(k-2)}{k} \beta_{k}-(k-3) \beta_{k}\\
=& \ -\dfrac{2}{k}\beta_{k}=\alpha_{(k-1,2)}=\varphi(k-1,2);\quad k=2-\Delta.
\end{align*}

Finally, we obtain the description of cocycles in $\C^2$ for 
various $\Delta$:
\begin{itemize}
 \item There are $n_1, m_1$ such that $\Delta=3-n_1-m_1=2-k$, where  $n_1,m_1\ge2$. From \eqref{2-cocycle} we have
\[
0=\frac{n_1m_1(m_1+1)+m_1(m_1-1)(m_1-2)}{(n_1+m_1-1)(n_1+m_1-2)}\alpha_{(n_1+m_1-1,1)}.
\]
Therefore
\[
\alpha_{(k,1)}=0;\quad k=2-\Delta.
\]
If $\varphi -D^3\psi$ is a cocycle in $\C^2$ then  $(\Delta^2 \varphi -D^3\psi)(n_1,m_1-1,2)=0$; $m_1\ge3$, if and only if:
    \begin{align*}
  -\frac{2}{m_1}\alpha_{(n_1,m_1)}=& \ -\frac{n_1(m_1-1)}{n_1+m_1-2}\Delta \alpha_{(n_1+m_1-2,2)}-\frac{n_1(n_1-1)}{n_1+m_1-2}\alpha_{(n_1+m_1-2,2)}\\
 -& \ \frac{n_1(n_1-2)(m_1-2)}{n_1+m_1-3}\alpha_{(n_1+m_1-2,2)}\\
 -& \ \frac{(m_1-1)(n_1-1)(m_1-3)}{n_1+m_1-3}\alpha_{(n_1+m_1-2,2)}.
   \end{align*}
Hence, in this case, cocycles in $\C^2$ are either zero or determined by  $\alpha_{(k-1,2)}$  such that $\Delta+n_1+m_1-3=\Delta+k-2=0$.

\item   $\Delta=n+m-3\ne0$, in this case, cocycles in $\C^2$ are either zero or determined by  $\alpha_{(k,1)}$ (see \eqref{2-cocycle}) such that $\Delta=2-k\ne0$. 
    \end{itemize}
    
    In both cases, every 2-cocycle is a coboundary or zero except $\alpha_{(1,0)}$ when $\Delta=1$.
  \end{proof}
  
\begin{theorem}\label{them:H^n}
For $n\ge3$, the
cohomology groups of $U(3)$ with the values in $M_{(0,\Delta)}$ where $0\ne\Delta \in \Bbbk$  are trivial.
    \end{theorem}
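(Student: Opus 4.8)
The plan is to follow the scheme of Theorems~\ref{thm:4.3} and~\ref{them:H^2}, specialised to $M=M_{(0,\Delta)}$ with $\Delta\ne 0$. The key simplification is that the module action collapses: from \eqref{eq:Vir} with $\alpha=0$ one has $v(1)u=\Delta u$, $v(0)u=\partial u$ and $v(k)u=0$ for $k\ge 2$, so in the reduced complex (where the $D^\bullet$-reduction kills $\partial$-images) $v(0)$ acts as $0$, $v(1)$ as multiplication by $\Delta$, and $v(k)$ as $0$ for $k\ge 2$. Consequently, in the Anick differential of Theorem~\ref{Anickdiff} every leading term $v(i_1)[\dots]$ with $i_1\ge 2$ and every $v(0)[\dots]$ vanish, while each $v(1)[\dots]$ survives only as a factor $\Delta$. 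I would apply this to $\Delta^{n}\varphi$ on the two families of Anick $(n+1)$-chains of Corollary~\ref{cor:U(3)AnickChains} and then reduce by the same coboundary $D^{n+1}\psi$ as in Theorem~\ref{thm:4.3}; the result is two families of homogeneous $\Bbbk$-linear recurrences among the scalars $\alpha_{(\dots)}$ of Lemma~\ref{lem:Reduction}, in which $\Delta$ enters only through the surviving $v(1)$-factor, just as in \eqref{2-cocycle}.

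Next I would solve these recurrences triangularly. The family from chains terminating in $v(1)v(0)$ carries the term $(-1)^{n-1}\Delta\,\alpha_{(i_1,\dots,i_{n-1},0)}$ with $i_1,\dots,i_{n-1}\ge 2$; since $\Delta\ne 0$ this expresses every terminal-$0$ coefficient through coefficients terminating in $(1,0)$, while the bulk family expresses each $\alpha_{(i_1,\dots,i_n)}$ with all $i_k\ge 2$ through coefficients terminating in $1$. Thus a cocycle is pinned down by its terminal-$1$ and terminal-$(1,0)$ data, exactly as for $n=2$, with the finitely many index tuples where a linear-in-$\Delta$ denominator vanishes treated in a separate branch, as the $k=2-\Delta$ case was in Theorem~\ref{them:H^2}.

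Finally I would kill this boundary data by an explicit coboundary. Choosing $\varphi_1\in\tilde\C^{n-1}$ and $\psi_1\in\tilde\C^{n}$ of the shape used in Theorem~\ref{thm:4.3}, with coefficients $\beta_{(\dots)}$ obtained by dividing the prescribed cocycle values by the relevant $\Delta$-linear factors, the collapsed action of the first step makes it a direct check that $\Delta^{n-1}\varphi_1-D^{n}\psi_1$ reproduces the cocycle on every chain. The structural reason the answer is $0$ for all $\Delta\ne 0$ when $n\ge 3$, in contrast with the one-dimensional class at $\Delta=1$ in degree $2$, is that the degree-$2$ obstruction lived on the bare chain $[1|0]$, whose only coboundary preimage carried the factor $\Delta-1$. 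For $n\ge 3$ every terminal-$(1,0)$ chain is $[i_1|\dots|i_{n-2}|1|0]$ with a nonempty bulk head $i_1,\dots,i_{n-2}\ge 2$, so its coefficient is governed by the $\Delta$-divisible relation above rather than by a $\Delta-1$ factor, and no class survives.

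The main obstacle is the bookkeeping of the second and third steps: verifying that the reduced recurrences really solve triangularly for the bulk coefficients in terms of the boundary ones, and that a single family of $\beta$-coefficients reproduces the cocycle simultaneously on all chain types. The genuinely delicate point is the degenerate-denominator branch: where the $\Delta$-linear factor vanishes for particular index tuples, one must supply an auxiliary relation from an adjacent chain (as $(n_1,m_1-1,2)$ was used in Theorem~\ref{them:H^2}) to force the coefficient to vanish, after which triviality of $\Homol^n(U(3),M_{(0,\Delta)})$ for all $\Delta\ne 0$ follows.
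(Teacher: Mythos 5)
Your plan follows essentially the same route as the paper's proof: evaluate the Anick differential of Theorem~\ref{Anickdiff} on the two chain families of Corollary~\ref{cor:U(3)AnickChains}, subtract an explicit $D^{n+1}\psi$ to remove the $\partial$-terms, read off linear recurrences that pin a cocycle down by its terminal-$1$ and terminal-$(1,0)$ data, and then kill that data by coboundaries of the form $\Delta^{n-1}\varphi_1-D^{n}\psi_1$, with a separate branch for degenerate denominators. One inaccuracy is worth flagging: the coefficient multiplying $\alpha_{(i_1,\dots,i_{n-1},0)}$ in the terminal-$(1,0)$ recurrence is not $\Delta$ but $\Delta+i_1+\cdots+i_{n-1}-n$ (and the coboundary step divides by $\Delta+i_1+\cdots+i_{n-2}-n+1$), so ``$\Delta\ne 0$'' alone does not let you solve for the terminal-$0$ coefficients; these factors vanish exactly when $\Delta$ is a suitable negative integer, and it is this that forces the paper's cascade of auxiliary reductions through $\alpha_{(\dots,2,1,0)}$, $\alpha_{(\dots,2,0)}$, $\alpha_{(\dots,2,1)}$ and $\alpha_{(\dots,2)}$ (the higher analogues of the $k=2-\Delta$ case you cite). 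Since you explicitly reserve a branch for vanishing denominators the plan is sound, but your closing ``structural reason'' (a $\Delta$-divisible relation replacing the $\Delta-1$ factor) is not the actual mechanism, and the substance of the theorem lies precisely in the bookkeeping you defer.
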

\begin{proof}
 For $n=3$, in the same way as the previous theory, we may assume 
 \[\varphi ([v(n)v(m)v(p)]) = \alpha_{(n,m,p)} , \varphi ([v(n)v(1)v(0)]) = \alpha_{(n,1,0)}\in \Bbbk,
 \]
for $n,m\ge2$ and $p\ge0$.

 The differential $\Delta^3\varphi$ takes the following 
values on the Anick 3-chains:
    \begin{align*}
      (\Delta^3\varphi)[v(n)v(m)v(p)v(q)]&{}=-\frac{nm}{n+m-1}\Delta\alpha_{(n+m-1,p,q)}-\frac{p(p-1)}{p+q-1}\alpha_{(n,m,p+q-1)}\\
      &+\frac{(n-1)(m-1)}{n+m-1}\partial\alpha_{(n+m,p,q)}-\frac{n(n-1)}{n+m-1}\alpha_{(n+m-1,p,q)} \\
      &+\frac{mp}{m+p-1}\Delta\alpha_{(n,m+p-1,q)}+\frac{(n-1)mp}{m+p-1}\alpha_{(n,m+p-1,q)}\\
      &-\frac{(m-1)(p-1)}{m+p-1}\partial\alpha_{(n,m+p,q)}+\frac{(p-1)(q-1)}{p+q-1}\partial\alpha_{(n,m,p+q)}\\
      &-\frac{pq}{p+q-1}\Delta\alpha_{(n,m,p+q-1)}-\frac{(m-1)pq}{p+q-1}\alpha_{(n,m,p+q-1)}\\
      \end{align*}
      \begin{align*}
      &-\frac{(n-1)pq}{p+q-1}\alpha_{(n,m,p+q-1)}+\frac{m(m-1)}{m+p-1}\alpha_{(n,m+p-1,q)}\\
      &+\frac{m(p-1)(q-1)}{p+q-1}\alpha_{(n,m-1,p+q)};\quad n,m,p\ge2, q\ge0,
    \end{align*}
     \[
    \begin{aligned}
     (\Delta^3\varphi)[v(n)v(m)v(1)v(0)]&{}=-\frac{nm}{n+m-1}\Delta\alpha_{(n+m-1,1,0)}\\ 
      &+\frac{(n-1)(m-1)}{n+m-1}\partial\alpha_{(n+m,1,0)}-\frac{n(n-1)}{n+m-1}\alpha_{(n+m-1,1,0)} \\
      &+\Delta\alpha_{(n,m,0)}+(n+m-3)\alpha_{(n,m,0)}-m\alpha_{(n,m-1,1)}\\
      &-n\alpha_{(n-1,m,1)}-\partial\alpha_{(n,m,1)};\quad n,m\ge2.
    \end{aligned}
    \]
    Reduce the result by means of $D^4\psi $, where
$\psi \in \tilde C^4 $ is given by 
\[
    \begin{aligned}
      \psi[v(n)v(m)v(p)v(q)]=
      & \ \frac{(n-1)(m-1)}{n+m-1}\alpha_{(n+m,p,q)}\\
      -& \ \frac{(m-1)(p-1)}{m+p-1}\alpha_{(n,m+p,q)}\\
      +& \ \frac{(p-1)(q-1)}{p+q-1}\alpha_{(n,m,p+q)};\quad n,m,p\ge2,q\ge 0,
    \end{aligned}
    \]
    \[
    \begin{aligned}
      \psi[v(n)v(m)v(1)v(0)]{}=
      \frac{(n-1)(m-1)}{n+m-1}\alpha_{(n+m,1,0)}-\alpha_{(n,m,1)};\quad n,m\ge2.
    \end{aligned}
    \]
    
   Namely,
    \begin{align*}
      (\Delta^3\varphi-D^4\psi)[v(n)v(m)v(p)v(q)]&{}=-\frac{nm}{n+m-1}\Delta\alpha_{(n+m-1,p,q)}-\frac{n(n-1)}{n+m-1}\alpha_{(n+m-1,p,q)} \\
      &+\frac{mp}{m+p-1}\Delta\alpha_{(n,m+p-1,q)}+\frac{(n-1)mp}{m+p-1}\alpha_{(n,m+p-1,q)}\\
      &-\frac{pq}{p+q-1}\Delta\alpha_{(n,m,p+q-1)}-\frac{(m-1)pq}{p+q-1}\alpha_{(n,m,p+q-1)}\\
      &-\frac{(n-1)pq}{p+q-1}\alpha_{(n,m,p+q-1)}+\frac{m(m-1)}{m+p-1}\alpha_{(n,m+p-1,q)}\\
      &-\frac{p(p-1)}{p+q-1}\alpha_{(n,m,p+q-1)}-\frac{n(n-2)(m-1)}{n+m-2}\alpha_{(n+m-1,p,q)}\\
      &-\frac{m(n-1)(m-2)}{n+m-2}\alpha_{(n+m-1,p,q)}\\
      &+\frac{m(m-2)(p-1)}{m+p-2}\alpha_{(n,m+p-1,q)}\\
      \end{align*}
      \begin{align*}
      &-\frac{p(n-1)(m-1)}{n+m-1}\alpha_{(n+m,p-1,q)}+\frac{p(m-1)(p-2)}{m+p-2}\alpha_{(n,m+p-1,q)}\\
      &-\frac{p(p-2)(q-1)}{p+q-2}\alpha_{(n,m,p+q-1)}-\frac{q(n-1)(m-1)}{n+m-1}\alpha_{(n+m,p,q-1)}\\
      &+\frac{q(m-1)(p-1)}{m+p-1}\alpha_{(n,m+p,q-1)}-\frac{q(p-1)(q-2)}{p+q-2}\alpha_{(n,m,p+q-1)},
    \end{align*}
    \begin{align*}
     (\Delta^3\varphi-D^4\psi)[v(n)v(m)v(1)v(0)]&{}=-\frac{nm}{n+m-1}\Delta\alpha_{(n+m-1,1,0)}-\frac{n(n-1)}{n+m-1}\alpha_{(n+m-1,1,0)} \\
      &+\Delta\alpha_{(n,m,0)}+(n+m-3)\alpha_{(n,m,0)}\\
       &-\frac{n(n-2)(m-1)}{n+m-2}\alpha_{(n+m-1,1,0)}\\
       &-\frac{m(n-1)(m-2)}{n+m-2}\alpha_{(n+m-1,1,0)}.
    \end{align*}
    
    Hence, $\varphi -D^4\tilde \C^4$ is a 3-cocycle in $\C^4$
for various $\Delta$.

{\sc Case 1:} $\Delta+n+m-3\ne0$; $n,m\ge2.$
    \begin{align*}\label{eq:45.1}
     -(\Delta+n+m-3)\alpha_{(n,m,0)}&{}=-\frac{nm}{n+m-1}\Delta\alpha_{(n+m-1,1,0)}-\frac{n(n-1)}{n+m-1}\alpha_{(n+m-1,1,0)} \\
       &-\frac{n(n-2)(m-1)}{n+m-2}\alpha_{(n+m-1,1,0)}-\frac{m(n-1)(m-2)}{n+m-2}\alpha_{(n+m-1,1,0)}.
    \end{align*}
    
    {\sc Case 2:} $\Delta+n+m+p-4\ne0$; $n,m,p\ge2.$
     \begin{align*}
      (\Delta+n+m+p-4)\alpha_{(n,m,p)}&=-\frac{nm}{n+m-1}\Delta\alpha_{(n+m-1,p,1)}-\frac{n(n-1)}{n+m-1}\alpha_{(n+m-1,p,1)}\\
      &+\frac{mp}{m+p-1}\Delta\alpha_{(n,m+p-1,1)}+\frac{(n-1)mp}{m+p-1}\alpha_{(n,m+p-1,1)}\\
      &+\frac{m(m-1)}{m+p-1}\alpha_{(n,m+p-1,1)}-\frac{n(n-2)(m-1)}{n+m-2}\alpha_{(n+m-1,p,1)}\\
      &-\frac{m(n-1)(m-2)}{n+m-2}\alpha_{(n+m-1,p,1)}\\
      &+\frac{m(m-2)(p-1)}{m+p-2}\alpha_{(n,m+p-1,1)}\\
      &-\frac{p(n-1)(m-1)}{n+m-1}\alpha_{(n+m,p-1,1)}\\
      &+\frac{p(m-1)(p-2)}{m+p-2}\alpha_{(n,m+p-1,1)}\\
      &-\frac{(n-1)(m-1)}{n+m-1}\alpha_{(n+m,p,0)}+\frac{(m-1)(p-1)}{m+p-1}\alpha_{(n,m+p,0)}.
    \end{align*}
    
    {\sc Case 3:} $\Delta+n_1+m_1-3=0$; $n_1,m_1\ge2.$
      \begin{align*}
     \alpha_{(n_1+m_1-1,1,0)}=& \ 0,\\
      \frac{-2m_1+4}{m_1}\alpha_{(n_1,m_1,0)}=& \ -\frac{n_1(m_1-1)}{n_1+m_1-2}\Delta\alpha_{(n_1+m_1-2,2,0)}-\frac{n_1(n_1-1)}{n_1+m_1-2}\alpha_{(n_1+m_1-2,2,0)} \\
      -& \ \frac{n_1(n_1-2)(m_1-2)}{n_1+m_1-3}\alpha_{(n_1+m_1-2,2,0)}\\
      -& \ \frac{(m_1-1)(n_1-1)(m_1-3)}{n_1+m_1-3}\alpha_{(n_1+m_1-2,2,0)}.
    \end{align*}
    
    {\sc Case 4:} $\Delta+n_2+m_2+p_2-4=0$; $n_2,m_2,p_2\ge2.$
    \begin{align*}
    \dfrac{2p_2-4}{p_2}\alpha_{(n_2,m_2,p_2)}&=-\frac{n_2m_2}{n_2+m_2-1}\Delta\alpha_{(n_2+m_2-1,p_2-1,2)}-\frac{n_2(n_2-1)}{n_2+m_2-1}\alpha_{(n_2+m_2-1,p_2-1,2)} \\
      &+\frac{m_2(p_2-1)}{m_2+p_2-2}\Delta\alpha_{(n_2,m_2+p_2-2,2)}+\frac{(n_2-1)m_2(p_2-1)}{m_2+p_2-2}\alpha_{(n_2,m_2+p_2-2,2)}\\
      &+\frac{m_2(m_2-1)}{m_2+p_2-2}\alpha_{(n_2,m_2+p_2-2,2)}-\frac{n_2(n_2-2)(m_2-1)}{n_2+m_2-2}\alpha_{(n_2+m_2-1,p_2-1,2)}\\
      &-\frac{m_2(n_2-1)(m_2-2)}{n_2+m_2-2}\alpha_{(n_2+m_2-1,p_2-1,2)}\\
      &+\frac{m_2(m_2-2)(p_2-2)}{m_2+p_2-3}\alpha_{(n_2,m_2+p_2-2,2)}\\
      &-\frac{(p_2-1)(n_2-1)(m_2-1)}{n_2+m_2-1}\alpha_{(n_2+m_2,p_2-2,2)}\\
      &+\frac{(p_2-1)(m_2-1)(p_2-3)}{m_2+p_2-3}\alpha_{(n_2,m_2+p_2-2,2)}\\
      &-\frac{2(n_2-1)(m_2-1)}{n_2+m_2-1}\alpha_{(n_2+m_2,p_2-1,1)}+\frac{2(m_2-1)(p_2-2)}{m_2+p_2-2}\alpha_{(n_2,m_2+p_2-1,1)}.
    \end{align*}
    
 The Case 4 can be obtained from Case 3 when $p_2=2$ and $m_1\ge3$ where 
 \[
 \Delta+n_1+m_1-3=\Delta+n_1+m_1-1+2-4=\Delta+n_2+m_2+p_2-4=0.
 \]
 
 Therefore
 \begin{align*}
  \alpha_{(n_2,m_2,2)}=& \ \alpha_{(n_1,m_1-1,2)},\\
      \frac{2}{m_2}\alpha_{(n_2,m_2,1)}=&  -\frac{n_2(m_2-1)}{n_2+m_2-2}\Delta\alpha_{(n_2+m_2-2,2,1)}-\frac{n_2(n_2-1)}{n_2+m_2-2}\alpha_{(n_2+m_2-2,2,1)}\\
      &-\frac{n_2(n_2-2)(m_2-2)}{n_2+m_2-3}\alpha_{(n_2+m_2-2,2,1)}\\
      &-\frac{(m_2-1)(n_2-1)(m_2-3)}{n_2+m_2-3}\alpha_{(n_2+m_2-2,2,1)}\\
      &-\frac{(n_2-1)(m_2-2)}{n_2+m_2-2}\alpha_{(n_2+m_2-1,2,0)}+\frac{m_2-1}{m_2+1}\alpha_{(n_2,m_2+1,0)}.
    \end{align*}

 Choose $\varphi_1\in \tilde C^2$ and $\psi_1\in \tilde C^3$ such that 
    \[
    \varphi_1(n,m)=\beta_{(n,m)}\quad,\quad \psi_1(n,1,0)=\beta_{(n,1)};\quad n\ge2, m\ge0.
    \]
    \[
     \psi_1(n,m,p)=\frac{(n-1)(m-1)}{n+m-1}\beta_{(n+m,p)}-\frac{(m-1)(p-1)}{m+p-1}\beta_{(n,m+p)};\quad n,m\ge2, p\ge0.
    \]
    Namely,
    \begin{align*}
  (\Delta^2\varphi_1-D^3\psi_1)(n,m,p)=& -\frac{nm}{n+m-1}\Delta \beta_{(n+m-1,p)}-\frac{n(n-1)}{n+m-1}\beta_{(n+m-1,p)}\\
    &+\frac{mp}{m+p-1}\Delta \beta_{(n,m+p-1)}+\frac{(n-1)mp}{m+p-1}\beta_{(n,m+p-1)}\\
    &+\frac{m(m-1)}{m+p-1}\beta_{(n,m+p-1)}
 -\frac{n(n-2)(m-1)}{n+m-2}\beta_{(n+m-1,p)}\\
 &-\frac{m(n-1)(m-2)}{n+m-2}\beta_{(n+m-1,p)}+\frac{m(m-2)(p-1)}{m+p-2}\beta_{(n,m+p-1)}\\
  &-\frac{p(n-1)(m-1)}{n+m-1}\beta_{(n+m,p-1)}+\frac{p(m-1)(p-2)}{m+p-2}\beta_{(n,m+p-1)}, 
   \end{align*}
    
    \begin{align*}
  (\Delta^2\varphi_1-D^3\psi_1)(n,m,1)=&-\frac{nm}{n+m-1}\Delta \beta_{(n+m-1,1)}-\frac{n(n-1)}{n+m-1}\beta_{(n+m-1,1)}\\
&+(\Delta+n+m-3) \beta_{(n,m)}-\frac{n(n-2)(m-1)}{n+m-2}\beta_{(n+m-1,1)}\\
  &-\frac{m(n-1)(m-2)}{n+m-2}\beta_{(n+m-1,1)}-\frac{(n-1)(m-1)}{n+m-1}\beta_{(n+m,0)},
   \end{align*}

    \begin{align*}
  (\Delta^2\varphi_1-D^3\psi_1)(n,m,0)=&-\frac{nm}{n+m-1}\Delta\beta_{(n+m-1,0)}-\frac{n(n-1)}{n+m-1}\beta_{(n+m-1,0)} \\
       &-\frac{n(n-2)(m-1)}{n+m-2}\beta_{(n+m-1,0)}-\frac{m(n-1)(m-2)}{n+m-2}\beta_{(n+m-1,0)},
   \end{align*}
    
    \[
    (\Delta^2\varphi_1-D^3\psi_1)(n,1,0)=-(\Delta+n-2)\beta_{(n,0)},
    \] 
     where
     \[\beta_{(k,0)}=
    \begin{cases}
    -\frac{\alpha_{(k,1,0)}}{\Delta+k-2}; & k\ne 2-\Delta \\
   0; & \text{otherwise}.
    \end{cases}
    \]

     Then $\Delta^2\varphi_1 $ is a coboundary in $\tilde \C^3$, and
      \[
    (\Delta^3\varphi_1-D^4\psi_1)(k,1,0)=\alpha_{(k,1,0)}=\varphi_{(k,1,0)};\quad k\ne 2-\Delta.
    \]
    
    Let us repeat the construction
with new values of $\beta$'s to get $\varphi^{(i)}_1$ and $\psi^{(i)}_1$ for $i=1,2,3,$ as following:

    \begin{itemize}
\item For 
\[
\beta_{(k,0)}=
    \begin{cases}
    \frac{k}{2}\alpha_{(k,2,0)}; & k= 2-\Delta,\\
    0; & \text{otherwise},
    \end{cases}
    \]
    we have
    \begin{align*}
    (\Delta^3\varphi^{(1)}_1-D^4\psi^{(1)}_1)(k,2,0)=& \ \alpha_{(k,2,0)}=\varphi_{(k,2,0)};\quad k= 2-\Delta,\\
    (\Delta^3\varphi^{(1)}_1-D^4\psi^{(1)}_1)(k,1,0)=& \ 0;\quad k\ne 2-\Delta.
    \end{align*}
 \item  For
    \[\beta_{(k,1)}=
    \begin{cases}
    \frac{k}{2}\alpha_{(k,2,1)}; & k=2-\Delta,\\
    0; & \text{otherwise},
    \end{cases}
    \]
we have
      \begin{align*}
    (\Delta^2\varphi^{(2)}_1-D^3\psi^{(2)}_1)(k,2,1)=& \ \alpha_{(k,2,1)}=\varphi_{(k,2,1)},\\
    (\Delta^3\varphi^{(2)}_1-D^4\psi^{(2)}_1)(k,1,0)=& \ 0,\\
    (\Delta^3\varphi^{(2)}_1-D^4\psi^{(2)}_1)(k,2,0)=& \ 0.
    \end{align*}
\item     Also if we put
    \[\beta_{(n,m)}=
    \begin{cases}
    \frac{\alpha_{(n,m,1)}}{\Delta+n+m-3}; & \Delta+n+m-3\ne0, n,m\ge2 \\
    -\frac{m}{2}\alpha_{(n,m-1,2)}; & \Delta+n+m-3=0, n,m\ge2.\\
    0; & \text{otherwise},
    \end{cases}
    \]
    we get  
$\varphi^{(3)}_1$ and $\psi^{(3)}_1$ such that  
      \begin{align*}
    (\Delta^2\varphi^{(3)}_1-D^3\psi^{(3)}_1)(n,m,1)=& \ \varphi_{(n,m,1)},\\
    (\Delta^2\varphi^{(3)}_1-D^3\psi^{(3)}_1)(n,m-1,2)= & \ \varphi_{(n,m-1,2)},\\
    (\Delta^2\varphi^{(3)}_1-D^3\psi^{(3)}_1)(n,m,p)=& \ 0;\quad \text{otherwise}.
    \end{align*}
    Hence, every 3-cocycle is a coboundary.
    \end{itemize}
    
 The differential $\Delta^4\varphi$ takes the following 
values on the Anick 4-chains:
\[
    \begin{aligned}
      (\Delta^4\varphi)[v(n)v(m)v(p)v(q)v(r)]&{}=-\frac{nm}{n+m-1}\Delta\alpha_{(n+m-1,p,q,r)}+\frac{mp}{m+p-1}\Delta\alpha_{(n,m+p-1,q,r)}\\
      &-\frac{pq}{p+q-1}\Delta\alpha_{(n,m,p+q-1,r)}+\frac{qr}{q+r-1}\Delta\alpha_{(n,m,p,q+r-1)}\\
        & +\frac{(n-1)(m-1)}{n+m-1}\partial\alpha_{(n+m,p,q,r)}-\frac{(m-1)(p-1)}{m+p-1}\partial\alpha_{(n,m+p,q,r)}\\
         &+\frac{(p-1)(q-1)}{p+q-1}\partial\alpha_{(n,m,p+q,r)}-\frac{(q-1)(r-1)}{q+r-1}\partial\alpha_{(n,m,p,q+r)}\\
      &-\frac{n(n-1)}{n+m-1}\alpha_{(n+m-1,p,q,r)} +\frac{m(m-1)}{m+p-1}\alpha_{(n,m+p-1,q,r)}\\
      \end{aligned}
      \]
      \[
      \begin{aligned}
      &-\frac{p(p-1)}{p+q-1}\alpha_{(n,m,p+q-1,r)}+\frac{q(q-1)}{q+r-1}\alpha_{(n,m,p,q+r-1)}\\
      &+\frac{(n-1)mp}{m+p-1}\alpha_{(n,m+p-1,q,r)}-\frac{(n-1)pq}{p+q-1}\alpha_{(n,m,p+q-1,r)}\\
      &-\frac{(m-1)pq}{p+q-1}\alpha_{(n,m,p+q-1,r)}+\frac{(n-1)qr}{q+r-1}\alpha_{(n,m,p,q+r-1)}\\
      &+\frac{(m-1)qr}{q+r-1}\alpha_{(n,m,p,q+r-1)}+\frac{(p-1)qr}{q+r-1}\alpha_{(n,m,p,q+r-1)}\\
     & -\frac{n(m-1)(p-1)}{m+p-1}\alpha_{(n-1,m+p,q,r)}+\frac{n(p-1)(q-1)}{p+q-1}\alpha_{(n-1,m,p+q,r)}\\
      &+\frac{m(p-1)(q-1)}{p+q-1}\alpha_{(n,m-1,p+q,r)}-\frac{n(q-1)(r-1)}{q+r-1}\alpha_{(n-1,m,p,q+r)}\\
      &-\frac{m(q-1)(r-1)}{q+r-1}\alpha_{(n,m-1,p,q+r)}-\frac{p(q-1)(r-1)}{q+r-1}\alpha_{(n,m,p-1,q+r)},
    \end{aligned}
    \]
     \[
    \begin{aligned}
     (\Delta^4\varphi)[v(n)v(m)v(p)v(1)v(0)]&{}=-\frac{nm}{n+m-1}\Delta\alpha_{(n+m-1,p,1,0)}+\frac{mp}{m+p-1}\Delta\alpha_{(n,m+p-1,1,0)}\\ 
      &+\frac{(n-1)(m-1)}{n+m-1}\partial\alpha_{(n+m,p,1,0)}-\frac{(m-1)(p-1)}{m+p-1}\partial\alpha_{(n,m+p,1,0)}\\
      &+\partial\alpha_{(n,m,p,1)}-\frac{n(n-1)}{n+m-1}\alpha_{(n+m-1,p,1,0)}\\
      &+\frac{m(m-1)}{m+p-1}\alpha_{(n,m+p-1,1,0)}+\frac{n(m-1)(p-1)}{m+p-1}\alpha_{(n-1,m+p,1,0)}\\
      &+\frac{(n-1)mp}{m+p-1}\alpha_{(n,m+p,1,0)}+\Delta\alpha_{(n,m,0)}+(n+m-3)\alpha_{(n,m,0)}\\
      &-n\alpha_{(n-1,m,p,1)}+m\alpha_{(n,m-1,p,1)}-p\alpha_{(n,m,p-1,1)}.
    \end{aligned}
    \]
    
   Reduce the result by means of $D^5\psi $, where
$\psi \in \tilde C^5 $ is given by 
\[
    \begin{aligned}
      \psi[v(n)v(m)v(p)v(q)v(r)]&{}= +\frac{(n-1)(m-1)}{n+m-1}\alpha_{(n+m,p,q,r)}-\frac{(m-1)(p-1)}{m+p-1}\alpha_{(n,m+p,q,r)}\\
         &+\frac{(p-1)(q-1)}{p+q-1}\alpha_{(n,m,p+q,r)}-\frac{(q-1)(r-1)}{q+r-1}\alpha_{(n,m,p,q+r)}.
    \end{aligned}
    \]
    \[
    \begin{aligned}
      \psi[v(n)v(m)v(p)v(1)v(0)]&{}= +\frac{(n-1)(m-1)}{n+m-1}\alpha_{(n+m,p,1,0)}-\frac{(m-1)(p-1)}{m+p-1}\alpha_{(n,m+p,1,0)}\\
         &+\alpha_{(n,m,p,1)}.
    \end{aligned}
    \]
    
    Namely,
        \begin{align*}
      (\Delta^4\varphi-D^5\psi)[v(n)v(m)v(p)v(q)v(r)]=&-\frac{nm}{n+m-1}\Delta\alpha_{(n+m-1,p,q,r)}\\
      &+\frac{mp}{m+p-1}\Delta\alpha_{(n,m+p-1,q,r)}\\
      \end{align*}
      \begin{align*}
     &-\frac{pq}{p+q-1}\Delta\alpha_{(n,m,p+q-1,r)}+\frac{qr}{q+r-1}\Delta\alpha_{(n,m,p,q+r-1)}-\frac{n(n-1)}{n+m-1}\alpha_{(n+m-1,p,q,r)}\\
     &+\frac{m(m-1)}{m+p-1}\alpha_{(n,m+p-1,q,r)}-\frac{p(p-1)}{p+q-1}\alpha_{(n,m,p+q-1,r)}+\frac{q(q-1)}{q+r-1}\alpha_{(n,m,p,q+r-1)}\\ 
      &+\frac{(n-1)mp}{m+p-1}\alpha_{(n,m+p-1,q,r)}-\frac{(n-1)pq}{p+q-1}\alpha_{(n,m,p+q-1,r)}-\frac{(m-1)pq}{p+q-1}\alpha_{(n,m,p+q-1,q,r)}\\
      &+\frac{(n-1)qr}{q+r-1}\alpha_{(n,m,p,q+r-1)}+\frac{(m-1)qr}{q+r-1}\alpha_{(n,m,p,q+r-1)}+\frac{(p-1)qr}{q+r-1}\alpha_{(n,m,p,q+r-1)}\\
      &-\frac{n(n-2)(m-1)}{n+m-2}\alpha_{(n+m-1,p,q,r)}-\frac{m(n-1)(m-2)}{n+m-2}\alpha_{(n+m-1,p,q,r)}\\
      &+\frac{m(m-2)(p-1)}{m+p-2}\alpha_{(n,m+p-1,q,r)}-\frac{p(n-1)(m-1)}{n+m-1}\alpha_{(n+m,p-1,q,r)} \\
      &+\frac{p(m-1)(p-2)}{m+p-2}\alpha_{(n,m+p-1,q,r)}-\frac{p(p-2)(q-1)}{p+q-2}\alpha_{(n,m,p+q-1,r)} \\
       &-\frac{q(n-1)(m-1)}{n+m-1}\alpha_{(n+m,p,q-1,r)}+\frac{q(m-1)(p-1)}{m+p-1}\alpha_{(n,m+p,q-1,r)} \\
      &-\frac{q(p-1)(q-2)}{p+q-2}\alpha_{(n,m,p+q-1,r)}+\frac{q(q-2)(r-1)}{q+r-2}\alpha_{(n,m,p,q+r-1)}\\
      &-\frac{r(n-1)(m-1)}{n+m-1}\alpha_{(n+m,p,q,r-1)}+\frac{r(m-1)(p-1)}{m+p-1}\alpha_{(n,m+p,q,r-1)} \\
      &-\frac{r(p-1)(q-1)}{p+q-1}\alpha_{(n,m,p+q,r-1)}+\frac{r(q-1)(r-2)}{q+r-2}\alpha_{(n,m,p,q+r-1)},
    \end{align*}
    \begin{align*}
     (\Delta^4\varphi-D^5\psi)[v(n)v(m)v(p)v(1)v(0)]=& -\frac{nm}{n+m-1}\Delta\alpha_{(n+m-1,p,1,0)}\\
     &+\frac{mp}{m+p-1}\Delta\alpha_{(n,m+p-1,1,0)}\\ 
     &-\frac{n(n-1)}{n+m-1}\alpha_{(n+m-1,p,1,0)}\\
     &+\frac{m(m-1)}{m+p-1}\alpha_{(n,m+p-1,1,0)} \\
      &-(\Delta+n+m+p-4)\alpha_{(n,m,p,0)}\\
      &-\frac{n(n-2)(m-1)}{n+m-2}\alpha_{(n+m-1,p,1,0)}\\
      &+\frac{n(m-1)(p-1)}{m+p-1}\alpha_{(n-1,m+p,1,0)}\\
      &-\frac{m(n-1)(m-2)}{n+m-2}\alpha_{(n+m-1,p,1,0)}\\
      \end{align*}
      \begin{align*}
      &+\frac{n(m-2)(p-1)}{m+p-2}\alpha_{(n,m+p-1,1,0)}-\frac{p(n-1)(m-1)}{n+m-1}\alpha_{(n+m,p-1,1,0)}\\
      &+\frac{p(m-1)(p-2)}{m+p-2}\alpha_{(n,m+p-1,1,0)}.
    \end{align*}
    
     Hence, $\varphi -D^5\tilde \C^5$ is a 4-cocycle in $\C^4$.
     
     {\sc Case 1:} $\Delta+n+m+p+q-5\ne0$; $n,m,p,q\ge2.$
    \[
    \begin{aligned}
      -(\Delta+n+m+p+q-5)\alpha_{(n,m,p,q)}&=-\frac{nm}{n+m-1}\Delta\alpha_{(n+m-1,p,q,1)}\\
      &+\frac{mp}{m+p-1}\Delta\alpha_{(n,m+p-1,q,1)}-\frac{pq}{p+q-1}\Delta\alpha_{(n,m,p+q-1,1)}\\
      &-\frac{n(n-1)}{n+m-1}\alpha_{(n+m-1,p,q,1)}+\frac{m(m-1)}{m+p-1}\alpha_{(n,m+p-1,q,1)}\\
      &-\frac{p(p-1)}{p+q-1}\alpha_{(n,m,p+q-1,1)}+\frac{(n-1)mp}{m+p-1}\alpha_{(n,m+p-1,q,1)}\\
      &-\frac{(n-1)pq}{p+q-1}\alpha_{(n,m,p+q-1,q,1)}-\frac{(m-1)pq}{p+q-1}\alpha_{(n,m,p+q-1,q,1)}\\
      &-\frac{n(n-2)(m-1)}{n+m-2}\alpha_{(n+m-1,p,q,1)}\\
      &-\frac{m(n-1)(m-2)}{n+m-2}\alpha_{(n+m-1,p,q,1)}\\
      &+\frac{m(m-2)(p-1)}{m+p-2}\alpha_{(n,m+p-1,q,1)}\\
      &-\frac{p(n-1)(m-1)}{n+m-1}\alpha_{(n+m,p-1,q,1)}\\
      &+\frac{p(m-1)(p-2)}{m+p-2}\alpha_{(n,m+p-1,q,1)}\\
      &-\frac{p(p-2)(q-1)}{p+q-2}\alpha_{(n,m,p+q-1,1)}\\
      &-\frac{q(n-1)(m-1)}{n+m-1}\alpha_{(n+m,p,q-1,1)}\\
      &+\frac{q(m-1)(p-1)}{m+p-1}\alpha_{(n,m+p,q-1,1)}\\
      &-\frac{q(p-1)(q-2)}{p+q-2}\alpha_{(n,m,p+q-1,1)}\\
      &-\frac{(n-1)(m-1)}{n+m-1}\alpha_{(n+m,p,q,0)}\\
      &+\frac{(m-1)(p-1)}{m+p-1}\alpha_{(n,m+p,q,0)}\\
      &-\frac{(p-1)(q-1)}{p+q-1}\alpha_{(n,m,p+q,0)}.
    \end{aligned}
    \]
    
     {\sc Case 2:} $\Delta+n+m+p-4\ne0$; $n,m,p\ge2.$
       \[
    \begin{aligned}
     (\Delta+n+m+p-4)\alpha_{(n,m,p,0)}&{}=-\frac{nm}{n+m-1}\Delta\alpha_{(n+m-1,p,1,0)}+\frac{mp}{m+p-1}\Delta\alpha_{(n,m+p-1,1,0)}\\ 
      &-\frac{n(n-1)}{n+m-1}\alpha_{(n+m-1,p,1,0)}+\frac{m(m-1)}{m+p-1}\alpha_{(n,m+p-1,1,0)} \\
      &-\frac{n(n-2)(m-1)}{n+m-2}\alpha_{(n+m-1,p,1,0)}+\frac{(n-1)mp}{m+p-1}\alpha_{(n,m+p-1,1,0)}\\
      &-\frac{m(n-1)(m-2)}{n+m-2}\alpha_{(n+m-1,p,1,0)}\\
      &+\frac{m(m-2)(p-1)}{m+p-2}\alpha_{(n,m+p-1,1,0)}\\
      &-\frac{p(n-1)(m-1)}{n+m-1}\alpha_{(n+m,p-1,1,0)}\\
      &+\frac{p(m-1)(p-2)}{m+p-2}\alpha_{(n,m+p-1,1,0)}.
    \end{aligned}
    \]
         
      {\sc Case 3:} $\Delta+n_2+m_2+p_2-4=0$;  $n_2,m_2,p_2\ge2.$
      \[
    \begin{aligned}
     \frac{2p_2-4}{p_2}\alpha_{(n_2,m_2,p_2,0)}&{}=-\frac{n_2m_2}{n_2+m_2-1}\Delta\alpha_{(n_2+m_2-1,p_2-1,2,0)}+\frac{m_2(p_2-1)}{m_2+p_2-2}\Delta\alpha_{(n_2,m_2+p_2-2,2,0)}\\
      &-\frac{n_2(n_2-1)}{n_2+m_2-1}\alpha_{(n_2+m_2-1,p_2-1,2,0)}+\frac{m_2(m_2-1)}{m_2+p_2-2}\alpha_{(n_2,m_2+p_2-2,2,0)} \\
      &+\frac{m_2(n_2-1)(p_2-1)}{m_2+p_2-2}\alpha_{(n_2,m_2+p_2-2,2,0)}\\
      &-\frac{n_2(n_2-2)(m_2-1)}{n_2+m_2-2}\alpha_{(n_2+m_2-1,p_2-1,2,0)}\\
      &-\frac{m_2(n_2-1)(m_2-2)}{n_2+m_2-2}\alpha_{(n_2+m_2-1,p_2-1,2,0)}\\
      &+\frac{m_2(m_2-2)(p_2-2)}{m_2+p_2-3}\alpha_{(n_2,m_2+p_2-2,2,0)}\\
      &-\frac{(p_2-1)(n_2-1)(m_2-1)}{n_2+m_2-1}\alpha_{(n_2+m_2,p_2-3,2,0)}\\
      &+\frac{(p_2-1)(m_2-1)(p_2-3)}{m_2+p_2-3}\alpha_{(n_2,m_2+p_2-2,2,0)}\\
      &-\frac{2(n_2-1)(m_2-1)}{n_2+m_2-1}\alpha_{(n_2+m_2,p_2-1,1,0)}\\
      &+\frac{2(m_2-1)(p_2-2)}{m_2+p_2-3}\alpha_{(n_2,m_2+p_2-1,1,0)}.
    \end{aligned}
    \]
    
 We can obtain from Case 4 a new special case when $p_2=2$. Let $n_1=n_2$ and let $m_1=m_2+1\ge3$ then 
 \[
 \Delta+n_1+m_1-3=\Delta+n_1+m_1-1+2-4=\Delta+n_2+m_2+p_2-4=0.
 \]
 
 Therefore
\[
    \begin{aligned}
      \alpha_{(n_2,m_2,2,0)}&=\alpha_{(n_1,m_1-1,2,0)}\\
     \frac{2}{m_2}\alpha_{(n_2,m_2,1,0)}&=-\frac{n_2(m_2-1)}{n_2+m_2-2}\Delta\alpha_{(n_2+m_2-2,2,1,0)}
-\frac{n_2(n_2-1)}{n_2+m_2-2}\alpha_{(n_2+m_2-2,2,1,0)} \\
      &-\frac{n_2(n_2-2)(m_2-2)}{n_2+m_2-3}\alpha_{(n_2+m_2-2,2,1,0)}\\
      &-\frac{(m_2-1)(n_2-2)(m_2-3)}{n_2+m_2-3}\alpha_{(n_2+m_2-2,2,1,0)},
      \end{aligned}
\]
   \[
    \begin{aligned}
     \frac{-2}{p_2}\alpha_{(n_2,m_2,p_2,1)}&=-\frac{n_2m_2}{n_2+m_2-1}\Delta\alpha_{(n_2+m_2-1,p_2-1,2,1)}+\frac{m_2(p_2-1)}{m_2+p_2-2}\Delta\alpha_{(n_2,m_2+p_2-2,2,1)}\\
     & -\frac{n_2(n_2-1)}{n_2+m_2-1}\alpha_{(n_2+m_2-1,p_2-1,2,1)}+\frac{m_2(m_2-1)}{m_2+p_2-2}\alpha_{(n_2,m_2+p_2-2,2,1)}\\
    &+\frac{(n_2-1)m_2(p_2-1)}{m_2+p_2-2}\alpha_{(n_2,m_2+p_2-2,2,1)}\\
    &-\frac{n_2(n_2-2)(m_2-1)}{n_2+m_2-2}\alpha_{(n_2+m_2-1,p_2-1,2,1)}\\
      &-\frac{m_2(n_2-1)(m_2-2)}{n_2+m_2-2}\alpha_{(n_2+m_2-1,p_2-1,2,1)}\\
      &+\frac{m_2(m_2-2)(p_2-2)}{m_2+p_2-3}\alpha_{(n_2,m_2+p_2-2,2,1)}\\
      &-\frac{(p_2-1)(n_2-1)(m_2-1)}{n_2+m_2-1}\alpha_{(n_2+m_2,p_2-2,2,1)}\\
      &+\frac{(p_2-1)(m_2-1)(p_2-3)}{m_2+p_2-3}\alpha_{(n_2,m_2+p_2-2,2,1)}\\
      &-\frac{(n_2-1)(m_2-1)}{n_2+m_2-1}\alpha_{(n_2+m_2,p_2-1,2,0)}\\
      &+\frac{(m_2-1)(p_2-2)}{m_2+p_2-2}\alpha_{(n_2,m_2+p_2-1,2,0)} \\
      &-\frac{(p_2-2)}{p_2}\alpha_{(n_2,m_2,p_2+1,0)}.
    \end{aligned}
    \]

{\sc Case 4:} $\Delta+n_3+m_3+p_3+q_3-5=0$;  $n_3,m_3,p_3\ge2,q_3\ge3.$
\[
    \begin{aligned}
     \frac{-2q_3+4}{q_3}\alpha_{(n_3,m_3,p_3,q_3)}&{}=-\frac{n_3m_3}{n_3+m_3-1}\Delta\alpha_{(n_3+m_3-1,p_3,q_3-1,2)}\\
     &+\frac{m_3p_3}{m_3+p_3-1}\Delta\alpha_{(n_3,m_3+p_3-1,q_3-1,2)}-\frac{p_3(q_3-1)}{p_3+q_3-2}\Delta\alpha_{(n_3,m_3,p_3+q_3-2,2)}\\
     &-\frac{n_3(n_3-1)}{n_3+m_3-1}\alpha_{(n_3+m_3-1,p_3,q_3-1,2)}+\frac{m_3(m_3-1)}{m_3+p_3-1}\alpha_{(n_3,m_3+p_3-1,q_3-1,2)} \\
     \end{aligned}
     \]
     \[
     \begin{aligned}
     &-\frac{p_3(p_3-1)}{p_3+q_3-2}\alpha_{(n_3,m_3,p_3+q_3-2,2)}+\frac{(n_3-1)m_3p_3}{m_3+p_3-1}\alpha_{(n_3,m_3+p_3-1,q_3-1,2)}\\
     &-\frac{(n_3-1)p_3(q_3-1)}{p_3+q_3-2}\alpha_{(n_3,m_3,p_3+q_3-2,2)}-\frac{(m_3-1)p_3(q_3-1)}{p_3+q_3-2}\alpha_{(n_3,m_3,p_3+q_3-2,2)}\\
     &-\frac{n_3(n_3-2)(m_3-1)}{n_3+m_3-2}\alpha_{(n_3+m_3-1,p_3,q_3-1,2)}-\frac{m_3(n_3-1)(m_3-2)}{n_3+m_3-2}\alpha_{(n_3+m_3-1,p_3,q_3-1,2)}\\
     &+\frac{m_3(m_3-2)(p_3-1)}{m_3+p_3-2}\alpha_{(n_3,m_3+p_3-1,q_3-1,2)}-\frac{p_3(n_3-1)(m_3-1)}{n_3+m_3-1}\alpha_{(n_3+m_3,p_3-1,q_3-1,2)}\\ 
     &+\frac{p_3(m_3-1)(p_3-2)}{m_3+p_3-2}\alpha_{(n_3,m_3+p_3-1,q_3-1,2)}-\frac{p_3(p_3-2)(q_3-2)}{p_3+q_3-3}\alpha_{(n_3,m_3,p_3+q_3-2,2)}\\
     &-\frac{(q_3-1)(n_3-1)(m_3-1)}{n_3+m_3-1}\alpha_{(n_3+m_3,p_3,q_3-2,2)}+\frac{(q_3-1)(m_3-1)(p_3-1)}{m_3+p_3-1}\alpha_{(n_3,m_3+p_3,q_3-2,2)}\\
     &-\frac{(q_3-1)(p_3-1)(q_3-3)}{m_3+p_3-3}\alpha_{(n_3,m_3,p_3+q_3-2,2)}-\frac{2(n_3-1)(m_3-1)}{n_3+m_3-1}\alpha_{(n_3+m_3,p_3,q_3-1,1)}\\
     &+\frac{2(m_3-1)(p_3-1)}{m_3+p_3-1}\alpha_{(n_3,m_3+p_3,q_3-1,1)}-\frac{2(p_3-1)(q_3-2)}{p_3+q_3-2}\alpha_{(n_3,m_3,p_3+q_3-1,1)}.
    \end{aligned}
    \]          
    
 Choose $\varphi_1\in \tilde \C^3$ and $\psi_1\in \tilde \C^4$ such that 
     \begin{align*}
     \varphi_1[v(n)v(m)v(p)]&=\beta_{(n,m,p)};\quad n,m\ge2, p\ge0,\\
     \varphi_1[v(n)v(1)v(0)]&=\beta_{(n,1,0)};\quad n\ge2,\\
      \psi_1[v(n)v(m)v(p)v(q)]&=
      \frac{(n-1)(m-1)}{n+m-1}\beta_{(n+m,p,q)}\\
      &-\frac{(m-1)(p-1)}{m+p-1}\beta_{(n,m+p,q)}\\
      &+\frac{(p-1)(q-1)}{p+q-1}\beta_{(n,m,p+q)};\quad n,m,p\ge2,q\ge 0,\\
      \psi_1[v(n)v(m)v(1)v(0)]&=
      \frac{(n-1)(m-1)}{n+m-1}\beta_{(n+m,1,0)}-\beta_{(n,m,1)};\quad n,m\ge2.
    \end{align*}

Namely,
    \begin{align*}
      (\Delta^3\varphi_1-D^4\psi_1)[v(n)v(m)v(p)v(q)]&=-\frac{nm}{n+m-1}\Delta\beta_{(n+m-1,p,q)}-\frac{n(n-1)}{n+m-1}\beta_{(n+m-1,p,q)} \\
      &+\frac{mp}{m+p-1}\Delta\beta_{(n,m+p-1,q)}+\frac{(n-1)mp}{m+p-1}\beta_{(n,m+p-1,q)}\\
      &-\frac{pq}{p+q-1}\Delta\beta_{(n,m,p+q-1)}-\frac{(m-1)pq}{p+q-1}\beta_{(n,m,p+q-1)}\\
      &-\frac{(n-1)pq}{p+q-1}\beta_{(n,m,p+q-1)}+\frac{m(m-1)}{m+p-1}\beta_{(n,m+p-1,q)}\\
      \end{align*}

      \begin{align*}
      &-\frac{p(p-1)}{p+q-1}\beta_{(n,m,p+q-1)}-\frac{n(n-2)(m-1)}{n+m-2}\beta_{(n+m-1,p,q)}\\
      &-\frac{m(n-1)(m-2)}{n+m-2}\beta_{(n+m-1,p,q)}+\frac{m(m-2)(p-1)}{m+p-2}\beta_{(n,m+p-1,q)}\\
      &-\frac{p(n-1)(m-1)}{n+m-1}\beta_{(n+m,p-1,q)}+\frac{p(m-1)(p-2)}{m+p-2}\beta_{(n,m+p-1,q)}\\
      &-\frac{p(p-2)(q-1)}{p+q-2}\beta_{(n,m,p+q-1)}-\frac{q(n-1)(m-1)}{n+m-1}\beta_{(n+m,p,q-1)}\\
      &+\frac{q(m-1)(p-1)}{m+p-1}\beta_{(n,m+p,q-1)}-\frac{q(p-1)(q-2)}{p+q-2}\beta_{(n,m,p+q-1)},
    \end{align*}
    \begin{align*}
     (\Delta^3\varphi_1-D^4\psi_1)[v(n)v(m)v(1)v(0)]&=-\frac{nm}{n+m-1}\Delta\beta_{(n+m-1,1,0)}-\frac{n(n-1)}{n+m-1}\beta_{(n+m-1,1,0)} \\
       &-\frac{n(n-2)(m-1)}{n+m-2}\beta_{(n+m-1,1,0)}\\
       &-\frac{m(n-1)(m-2)}{n+m-2}\beta_{(n+m-1,1,0)}\\
        &+(\Delta+n+m-3)\beta_{(n,m,0)},
    \end{align*}
    \begin{align*}
      (\Delta^3\varphi_1-D^4\psi_1)[v(n)v(m)v(p)v(0)]&=-\frac{nm}{n+m-1}\Delta\beta_{(n+m-1,p,0)}-\frac{n(n-1)}{n+m-1}\beta_{(n+m-1,p,0)}\\
      &+\frac{mp}{m+p-1}\Delta\beta_{(n,m+p-1,0)}+\frac{(n-1)mp}{m+p-1}\beta_{(n,m+p-1,0)}\\
      &+\frac{m(m-1)}{m+p-1}\beta_{(n,m+p-1,0)}-\frac{n(n-2)(m-1)}{n+m-2}\beta_{(n+m-1,p,0)}\\
      &-\frac{m(n-1)(m-2)}{n+m-2}\beta_{(n+m-1,p,0)}\\
      &+\frac{m(m-2)(p-1)}{m+p-2}\beta_{(n,m+p-1,0)}\\
      &-\frac{p(n-1)(m-1)}{n+m-1}\beta_{(n+m,p-1,0)}\\
      &+\frac{p(m-1)(p-2)}{m+p-2}\beta_{(n,m+p-1,0)},
    \end{align*}
    
    where

    \begin{itemize}
        \item for \[\beta_{(n,m,0)}=
    \begin{cases}
    \frac{\alpha_{(n,m,1,0)}}{\Delta+n+m-3};& \Delta+n+m-3\ne0,n,m\ge2,\\
    0; & \text{otherwise},
    \end{cases}
    \]
 then $\Delta^3\varphi_1 $ is a coboundary in $\tilde \C^4$, and
   \[
   (\Delta^3\varphi_1-D^4\psi_1)[v(n)v(m)v(1)v(0)]=\varphi(n,m,1,0);\quad \Delta+n+m-3\ne0.
   \]
   \end{itemize}
   
   Let us repeat the construction
with new values of $\beta$'s to get  
$\varphi^{(i)}_1$ and $\psi^{(i)}_1$ for $i=1,2,3,4$ as following:
\begin{itemize}
   \item  For
    \[\beta_{(n,1,0)}=
    \begin{cases}
     \frac{n}{2}\alpha_{(n-1,2,1,0)};& \Delta+n-1=0,n\ge3,\\
    0; & \text{otherwise},
    \end{cases}
    \]
    then
    \begin{align*}
   (\Delta^3\varphi^{(1)}_1-D^4\psi^{(1)}_1)[v(n-1)v(2)v(1)v(0)]&=\varphi(n-1,2,1,0);\quad \Delta+n-1=0.\\
   (\Delta^3\varphi^{(1)}_1-D^4\psi^{(1)}_1)[v(n)v(m)v(p)v(q)]&=0;\quad \text{otherwise}.
   \end{align*}
   \item Also, if we put
   \[\beta_{(n,m,0)}=
    \begin{cases}
      \frac{m}{2m-4}\alpha_{(n,m-1,2,0)}; & \Delta+n+m-3=0, n\ge2,m\ge3,\\
    0; & \text{otherwise},
    \end{cases}
    \]
    we get  new $\varphi^{(2)}_1$ and $\psi^{(2)}_1$ such that 
    \begin{align*}
   (\Delta^3\varphi^{(2)}_1-D^4\psi^{(2)}_1)[v(n)v(m-1)v(2)v(0)]&=\varphi(n,m-1,2,0);\quad \Delta+n+m-3=0,\\
   (\Delta^3\varphi^{(2)}_1-D^4\psi^{(2)}_1)[v(n)v(m)v(p)v(q)]&=0;\quad \text{otherwise}.
   \end{align*}
  
    \item For  
    \[
    \beta_{(n,m,1)}=
    \begin{cases}
     -\frac{m}{2}\alpha_{(n,m-1,2,1)}; & \Delta+n+m-2=0,  n\ge2,m\ge3,\\
    0; & \text{otherwise},
    \end{cases}
    \]
    we have that
    \begin{align*}
   (\Delta^3\varphi^{(3)}_1-D^4\psi^{(3)}_1)[v(n)v(m-1)v(2)v(1)]&=\varphi(n,m-1,2,1);\quad \Delta+n+m-2=0,\\
   (\Delta^3\varphi^{(3)}_1-D^4\psi^{(3)}_1)[v(n)v(m)v(p)v(q)]&=0;\quad \text{otherwise}.
   \end{align*}
   \item For
   \[
    \beta_{(n,m,p)}=
    \begin{cases}
    -\frac{\alpha_{(n,m,p,0)}}{\Delta+n+m+p-4};& \Delta+n+m+p-4\ne0,  n,m,p\ge2,\\
    \frac{p}{-2p+4}\alpha_{(n,m,p-1,2)}; & \Delta+n+m+p-4=0,  n,m\ge2, p\ge3,\\
    0; & \text{otherwise},
    \end{cases}
    \]
we have that 
   \begin{align*}
   (\Delta^3\varphi^{(4)}_1-D^4\psi^{(4)}_1)[v(n)v(m)v(p)v(1)]=& \ \varphi(n,m,p,1);\quad \Delta+n+m+p-4\ne0,\\
   (\Delta^3\varphi^{(4)}_1-D^4\psi^{(4)}_1)[v(n)v(m)v(p-1)v(2)]=& \ \varphi(n,m,p-1,2);\\
   & \ \text{where} \ \Delta+n+m+p-4=0,\\
   (\Delta^3\varphi^{(4)}_1-D^4\psi^{(4)}_1)[v(n)v(m)v(p)v(q)]=& \ 0;\quad \text{otherwise}.
   \end{align*}
   \end{itemize}
   
Hence, every $4$-cocycle is a coboundary.

In a similar way,
    \[
 \begin{aligned}
 (\Delta^{n}\varphi - D^{n+1}\psi )[i_1|i_2|\ldots|i_{n+1}]&=\sum_{j=1}^n\frac{(-1)^ji_ji_{j+1}}{i_j+i_{j+1}-1}\Delta\alpha_{(i_1,\ldots,i_j+i_{j+1}-1,\ldots,i_{n+1})}\\
  & +\sum^n_{j=1}(-1)^{j}\frac{i_j(i_j-1)}{i_j+i_{j+1}-1}\alpha_{(i_1,\ldots,i_j+i_{j+1}-1,\ldots,i_{n+1})}\\
  &+\sum^n_{j=2}\sum^{j-1}_{t=1}(-1)^{j}\frac{i_ji_{j+1}}{i_j+i_{j+1}-1}(i_t-1)\alpha_{(i_1,\ldots,i_j+i_{j+1}-1,\ldots,i_{n+1})}\\
   &+\sum^{n+1}_{t=3}\sum^{t-2}_{j=1}(-1)^{j}i_t\frac{(i_j-1)(i_{j+1}-1)}{i_j+i_{j+1}-1}\alpha_{(i_1,\ldots ,i_j+i_{j+1},\ldots, i_t-1,\ldots ,i_{n+1})}\\
     &+\sum^n_{t=1}(-1)^{t}i_t\frac{(i_t-2)(i_{t+1}-1)}{i_t+i_{t+1}-2}\alpha_{(i_1,\ldots ,i_t+i_{t+1}-1,\ldots ,i_{n+1})}\\
     &+\sum^{n}_{t=1}(-1)^{t}i_{t+1}\frac{(i_t-1)(i_{t+1}-2)}{i_t+i_{t+1}-2}\alpha_{(i_1,\ldots ,i_t+i_{t+1}-1,\ldots ,i_{n+1})}\\
 \end{aligned}
 \]
  \[
 \begin{aligned}
 (\Delta^{n}\varphi - D^{n+1}\psi )[i_1|i_2|\ldots|i_{n-1}|1|0]&=\sum_{j=1}^{n-2}\frac{(-1)^ji_ji_{j+1}}{i_j+i_{j+1}-1}\Delta\alpha_{(i_1,\ldots,i_j+i_{j+1}-1,\ldots,i_{n-1},1,0)}\\
& +(-1)^{n-1}\Delta\alpha_{(i_1,i_2,\ldots,i_{n-1},0)}\\
  & +\sum^{n-2}_{j=1}(-1)^{j}\frac{i_j(i_j-1)}{i_j+i_{j+1}-1}\alpha_{(i_1,\ldots,i_j+i_{j+1}-1,\ldots,i_{n-1},1,0)}\\
  & +(-1)^{n-1}\alpha_{(i_1,i_2,\ldots,i_{n-1},0)}\\
  &+\sum^{n-2}_{j=1}\sum^{j-1}_{t=1}(-1)^{j}\frac{i_ji_{j+1}}{i_j+i_{j+1}-1}(i_t-1)\\
  &\times\alpha_{(i_1,\ldots,i_j+i_{j+1}-1,\ldots,i_{n-1},1,0)}\\
  &+\sum^{n-1}_{t=1}(-1)^{n-1}(i_t-1)\alpha_{(i_1,i_2,\ldots,i_{n-1},0)}\\
  &+\sum^{n-1}_{t=3}\sum^{t-2}_{j=1}(-1)^{j}i_t\frac{(i_j-1)(i_{j+1}-1)}{i_j+i_{j+1}-1}\\
   &\times\alpha_{(i_1,\ldots ,i_j+i_{j+1},\ldots, i_t-1,\ldots ,i_{n-1},1,0)}\\
   &+\sum^{n-2}_{t=1}(-1)^{t}i_t\frac{(i_t-2)(i_{t+1}-1)}{i_t+i_{t+1}-2}\alpha_{(i_1,\ldots ,i_t+i_{t+1}-1,\ldots ,i_{n-1},1,0)}\\
   \end{aligned}
   \]
   \[
   \begin{aligned}
   &+\sum^{n-2}_{t=1}(-1)^{t}i_{t+1}\frac{(i_t-1)(i_{t+1}-2)}{i_t+i_{t+1}-2}\alpha_{(i_1,\ldots ,i_t+i_{t+1}-1,\ldots ,i_{n-1},1,0)}.
 \end{aligned}
 \]
 
Hence, $\varphi -D^{n+1}\tilde \C^{n+1}$ is a $n$-cocycle in $\C^{n}$ for various $\Delta$:

{\sc Case 1:} $\Delta+i_1+\ldots+i_n-n-1 \ne0$;  $i_1,\ldots,i_n\ge2.$
 \[
 \begin{aligned}
(-1)^{n+1} (\Delta+i_1+\ldots+i_n-n-1 )\alpha_{(i_1,i_2,\ldots,i_n)}&=\sum_{j=1}^{n-1}\frac{(-1)^ji_ji_{j+1}}{i_j+i_{j+1}-1}\Delta\alpha_{(i_1,\ldots,i_j+i_{j+1}-1,\ldots,i_{n},1)}\\
  & +\sum^{n-1}_{j=1}(-1)^{j}\frac{i_j(i_j-1)}{i_j+i_{j+1}-1}\alpha_{(i_1,\ldots,i_j+i_{j+1}-1,\ldots,i_n,1)}\\
  &+\sum^n_{j=2}\sum^{j-1}_{t=1}(-1)^{j}\frac{i_ji_{j+1}}{i_j+i_{j+1}-1}(i_t-1)\\
&\times\alpha_{(i_1,\ldots,i_j+i_{j+1}-1,\ldots,i_n,1)}\\
   &+\sum^{n-1}_{t=3}\sum^{t-2}_{j=1}(-1)^{j}i_t\frac{(i_j-1)(i_{j+1}-1)}{i_j+i_{j+1}-1}\\
   &\times\alpha_{(i_1,\ldots ,i_j+i_{j+1},\ldots, i_t-1,\ldots ,i_n,1)}\\
     &+\sum^n_{t=1}(-1)^{t}i_t\frac{(i_t-2)(i_{t+1}-1)}{i_t+i_{t+1}-2}\\
     &\times\alpha_{(i_1,\ldots ,i_t+i_{t+1}-1,\ldots ,i_n,1)}\\
     &+\sum^{n}_{t=1}(-1)^{t}i_{t+1}\frac{(i_t-1)(i_{t+1}-2)}{i_t+i_{t+1}-2}\\
     &\times\alpha_{(i_1,\ldots ,i_t+i_{t+1}-1,\ldots ,i_n,1)}.\\
 \end{aligned}
 \]
 
 {\sc Case 2:} $\Delta+i_1+\ldots+i_{n-1}-n \ne0$;  $i_1,\ldots,i_{n-1}\ge2.$
  \[
 \begin{aligned}
(-1)^n (\Delta+i_1+\ldots+i_{n-1}-n )\alpha_{(i_1,i_2,\ldots,i_{n-1},0)}&=\sum_{j=1}^{n-2}\frac{(-1)^ji_ji_{j+1}}{i_j+i_{j+1}-1}\Delta\alpha_{(i_1,\ldots,i_j+i_{j+1}-1,\ldots,i_{n-1},1,0)}\\
  & +\sum^{n-2}_{j=1}(-1)^{j}\frac{i_j(i_j-1)}{i_j+i_{j+1}-1}\\
  &\times\alpha_{(i_1,\ldots,i_j+i_{j+1}-1,\ldots,i_{n-1},1,0)}\\
  &+\sum^{n-2}_{j=1}\sum^{j-1}_{t=1}(-1)^{j}\frac{i_ji_{j+1}}{i_j+i_{j+1}-1}(i_t-1)\\
  &\times\alpha_{(i_1,\ldots,i_j+i_{j+1}-1,\ldots,i_{n-1},1,0)}\\
  \end{aligned}
  \]
  \[
  \begin{aligned}
   &+\sum^{n-1}_{t=3}\sum^{t-2}_{j=1}(-1)^{j}i_t\frac{(i_j-1)(i_{j+1}-1)}{i_j+i_{j+1}-1}\alpha_{(i_1,\ldots ,i_j+i_{j+1},\ldots, i_t-1,\ldots ,i_{n-1},1,0)}\\
        &+\sum^{n-2}_{t=1}(-1)^{t}i_t\frac{(i_t-2)(i_{t+1}-1)}{i_t+i_{t+1}-2}\alpha_{(i_1,\ldots ,i_t+i_{t+1}-1,\ldots ,i_{n-1},1,0)}\\
     &+\sum^{n-2}_{t=1}(-1)^{t}i_{t+1}\frac{(i_t-1)(i_{t+1}-2)}{i_t+i_{t+1}-2}\alpha_{(i_1,\ldots ,i_t+i_{t+1}-1,\ldots ,i_{n-1},1,0)}
 \end{aligned}
 \]
 
 {\sc Case 3:} $\Delta+i_1+\ldots+i_{n-1}-n =0$;  $i_1,\ldots,i_{n-1}\ge2.$
  \[
 \begin{aligned}
 \frac{2(-1)^n}{i_{n-2}}\alpha_{(i_1,i_2,\ldots,i_{n-2},1,0)}&=\sum_{j=1}^{n-4}\frac{(-1)^ji_ji_{j+1}}{i_j+i_{j+1}-1}\Delta\alpha_{(i_1,\ldots,i_j+i_{j+1}-1,\ldots,i_{n-2}-1,2,1,0)}\\
&+\frac{(-1)^{n-3}i_{n-3}(i_{n-2}-1)}{i_{n-3}+i_{n-2}-2}\Delta\alpha_{(i_1,\ldots,i_{n-4},i_{n-3}+i_{n-2}-2,2,1,0)}\\
  & +\sum^{n-4}_{j=1}(-1)^{j}\frac{i_j(i_j-1)}{i_j+i_{j+1}-1}\alpha_{(i_1,\ldots,i_j+i_{j+1}-1,\ldots,i_{n-2}-1,2,1,0)}\\
  &+(-1)^{n-3}\frac{i_{n-3}(i_{n-3}-1)}{i_{n-3}+i_{n-2}-2}\alpha_{(i_1,\ldots,i_{n-3}+i_{n-2}-2,2,1,0)}\\
  &+\sum^{n-5}_{t=1}\sum^{t+1}_{j=2}(-1)^{j}\frac{i_ji_{j+1}}{i_j+i_{j+1}-1}(i_t-1)\alpha_{(i_1,\ldots,i_j+i_{j+1}-1,\ldots,i_{n-2}-1,2,1,0)}\\
  &+\sum^{n-4}_{t=1}(-1)^{n-3}\frac{i_{n-3}(i_{n-2}-1)}{i_{n-3}+i_{n-2}-2}(i_t-1)\alpha_{(i_1,\ldots,i_{n-3}+i_{n-2}-2,2,1,0)}\\
   &+\sum^{n-4}_{t=3}\sum^{t-2}_{j=1}(-1)^{j}i_t\frac{(i_j-1)(i_{j+1}-1)}{i_j+i_{j+1}-1}\alpha_{(i_1,\ldots ,i_j+i_{j+1},\ldots, i_t-1,\ldots ,i_{n-2},2,1,0)}\\
   &+(-1)^{n-3}\frac{2(i_{n-3}-1)(i_{n-2}-2)}{i_{n-3}+i_{n-2}-2}\alpha_{(i_1,\ldots ,i_j+i_{j+1},\ldots, i_t-1,\ldots ,i_{n-2}-1,2,1,0)}\\
        &+\sum^{n-4}_{t=1}(-1)^{t}i_t\frac{(i_t-2)(i_{t+1}-1)}{i_t+i_{t+1}-2}\alpha_{(i_1,\ldots ,i_t+i_{t+1}-1,\ldots ,i_{n-2}-1,2,1,0)}\\
        &+(-1)^{n-3}i_{n-3}\frac{(i_{n-3}-2)(i_{n-2}-2)}{i_{n-3}+i_{n-2}-3}\alpha_{(i_1,\ldots ,i_{n-3}+i_{n-2}-2,2,1,0)}\\
     &+\sum^{n-4}_{t=1}(-1)^{t}i_{t+1}\frac{(i_t-1)(i_{t+1}-2)}{i_t+i_{t+1}-2}\alpha_{(i_1,\ldots ,i_t+i_{t+1}-1,\ldots ,i_{n-2}-1,2,1,0)}\\
     &+(-1)^{n-3}\frac{(i_{n-2}-1)(i_{n-3}-1)(i_{n-2}-3)}{i_{n-3}+i_{n-2}-3}\alpha_{(i_1,\ldots  ,i_{n-3}+i_{n-2}-2,2,1,0)},
 \end{aligned}
 \]
  \[
 \begin{aligned}
 (-1)^n\frac{2i_{n-1}-4}{i_{n-1}}\alpha_{(i_1,i_2,\ldots,i_{n-1},0)}&=\sum_{j=1}^{n-3}\frac{(-1)^ji_ji_{j+1}}{i_j+i_{j+1}-1}\Delta\alpha_{(i_1,\ldots,i_j+i_{j+1}-1,\ldots,i_{n-1}-1,2,0)}\\
 &+\frac{(-1)^{n-2}i_{n-2}(i_{n-1}-1)}{i_{n-2}+i_{n-1}-2}\Delta\alpha_{(i_1,\ldots,i_{n-3},i_{n-2}+i_{n-1}-2,2,0)}\\
  & +\sum^{n-3}_{j=1}(-1)^{j}\frac{i_j(i_j-1)}{i_j+i_{j+1}-1}\alpha_{(i_1,\ldots,i_j+i_{j+1}-1,\ldots,i_{n-1}-1,2,0)}\\
  &+(-1)^{n-2}\frac{i_{n-2}(i_{n-2}-1)}{i_{n-2}+i_{n-1}-2}\alpha_{(i_1,\ldots,i_{n-3},i_{n-2}+i_{n-1}-2,2,0)}\\
  &+\sum^{n-4}_{t=1}\sum^{t+1}_{j=2}(-1)^{j}\frac{i_ji_{j+1}}{i_j+i_{j+1}-1}(i_t-1)\alpha_{(i_1,\ldots,i_j+i_{j+1}-1,\ldots,i_{n-1}-1,2,0)}\\
  &+\sum^{n-3}_{t=1}(-1)^{n-2}\frac{i_{n-2}(i_{n-1}-1)}{i_{n-2}+i_{n-1}-2}(i_t-1)\alpha_{(i_1,\ldots,i_{n-2}+i_{n-1}-2,2,0)}\\
    &+\sum^{n-3}_{j=1}(-1)^{j}i_j\frac{(i_j-2)(i_{j+1}-1)}{i_j+i_{j+1}-2}\alpha_{(i_1,\ldots ,i_j+i_{j+1}-1,\ldots ,i_{n-1}-1,2,0)}\\
    &+(-1)^{n-2}i_{n-2}\frac{(i_{n-2}-2)(i_{n-1}-2)}{i_{n-2}+i_{n-1}-3}\alpha_{(i_1,\ldots ,i_{n-2}+i_{n-1}-2,2,0)}\\
&+\sum^{n-3}_{j=1}(-1)^{j}i_{j+1}\frac{(i_j-1)(i_{j+1}-2)}{i_j+i_{j+1}-2}\alpha_{(i_1,\ldots ,i_t+i_{t+1}-1,\ldots ,i_{n-1}-1,2,0)}\\
&+(-1)^{n-2}\frac{(i_{n-1}-1)(i_{(n-2)}-1)(i_{n-1}-3)}{i_{n-2}+i_{n-1}-3}\\
&\times\alpha_{(i_1,\ldots ,i_{n-2}+i_{n-1}-2,2,0)}\\
   &+\sum^{n-4}_{j=1}\sum^{n-2}_{t=j+2}(-1)^{j}i_t\frac{(i_j-1)(i_{j+1}-1)}{i_j+i_{j+1}-1}\\
   &\times\alpha_{(i_1,\ldots ,i_j+i_{j+1},\ldots, i_t-1,\ldots ,i_{n-1}-1,2,0)}\\
   &+\sum^{n-3}_{j=1}(-1)^{j}\frac{(i_{n-1}-1)(i_j-1)(i_{j+1}-1)}{i_j+i_{j+1}-1}\\
   &\times\alpha_{(i_1,\ldots ,i_j+i_{j+1},\ldots ,i_{n-1}-2,2,0)}\\
   &+\sum^{n-3}_{j=1}(-1)^{j}\frac{2(i_j-1)(i_{j+1}-1)}{i_j+i_{j+1}-1}\alpha_{(i_1,\ldots ,i_j+i_{j+1},\ldots, i_t-1,\ldots ,i_{n-1}-1,1,0)}\\
   &+(-1)^{n-2}\frac{2(i_{n-2}-1)(i_{n-1}-2)}{i_{n-2}+i_{n-1}-3}\alpha_{(i_1,\ldots ,i_{n-2}+i_{n-1}-1,1,0)}\\
    \end{aligned}
 \]
  \[
 \begin{aligned}
- \frac{2(-1)^n}{i_{n-1}}\alpha_{(i_1,i_2,\ldots,i_{n-1},1)}&=\sum_{j=1}^{n-3}\frac{(-1)^ji_ji_{j+1}}{i_j+i_{j+1}-1}\Delta\alpha_{(i_1,\ldots,i_j+i_{j+1}-1,\ldots,i_{n-1}-1,2,1)}\\
 &+\frac{(-1)^{n-2}i_{n-2}(i_{n-1}-1)}{i_{n-2}+i_{n-1}-2}\Delta\alpha_{(i_1,\ldots,i_{n-3},i_{n-2}+i_{n-1}-2,2,1)}\\
  & +\sum^{n-3}_{j=1}(-1)^{j}\frac{i_j(i_j-1)}{i_j+i_{j+1}-1}\alpha_{(i_1,\ldots,i_j+i_{j+1}-1,\ldots,i_{n-1}-1,2,1)}\\
  &+(-1)^{n-2}\frac{i_{n-2}(i_{n-2}-1)}{i_{n-2}+i_{n-1}-2}\alpha_{(i_1,\ldots,i_{n-3},i_{n-2}+i_{n-1}-2,2,1)}\\
  &+\sum^{n-4}_{t=1}\sum^{t+1}_{j=2}(-1)^{j}\frac{i_ji_{j+1}}{i_j+i_{j+1}-1}(i_t-1)\alpha_{(i_1,\ldots,i_j+i_{j+1}-1,\ldots,i_{n-1}-1,2,1)}\\
  &+\sum^{n-3}_{t=1}(-1)^{n-2}\frac{i_{n-2}(i_{n-1}-1)}{i_{n-2}+i_{n-1}-2}(i_t-1)\alpha_{(i_1,\ldots,i_{n-2}+i_{n-1}-2,2,1)}\\
    &+\sum^{n-3}_{j=1}(-1)^{j}i_j\frac{(i_j-2)(i_{j+1}-1)}{i_j+i_{j+1}-2}\alpha_{(i_1,\ldots ,i_j+i_{j+1}-1,\ldots ,i_{n-1}-1,2,1)}\\
    &+(-1)^{n-2}i_{n-2}\frac{(i_{n-2}-2)(i_{n-1}-2)}{i_{n-2}+i_{n-1}-3}\alpha_{(i_1,\ldots ,i_{n-2}+i_{n-1}-2,2,1)}\\
&+\sum^{n-3}_{j=1}(-1)^{j}i_{j+1}\frac{(i_j-1)(i_{j+1}-2)}{i_j+i_{j+1}-2}\alpha_{(i_1,\ldots ,i_t+i_{t+1}-1,\ldots ,i_{n-1}-1,2,1)}\\
&+(-1)^{n-2}\frac{(i_{n-1}-1)(i_{(n-2)}-1)(i_{n-1}-3)}{i_{n-2}+i_{n-1}-3}\alpha_{(i_1,\ldots ,i_{n-2}+i_{n-1}-2,2,1)}\\
   &+\sum^{n-4}_{j=1}\sum^{n-2}_{t=j+2}(-1)^{j}i_t\frac{(i_j-1)(i_{j+1}-1)}{i_j+i_{j+1}-1}\alpha_{(i_1,\ldots ,i_j+i_{j+1},\ldots, i_t-1,\ldots ,i_{n-1}-1,2,1)}\\
   &+\sum^{n-3}_{j=1}(-1)^{j}\frac{(i_{n-1}-1)(i_j-1)(i_{j+1}-1)}{i_j+i_{j+1}-1}\alpha_{(i_1,\ldots ,i_j+i_{j+1},\ldots ,i_{n-1}-2,2,1)}\\
   &+\sum^{n-3}_{j=1}(-1)^{j}\frac{(i_j-1)(i_{j+1}-1)}{i_j+i_{j+1}-1}\alpha_{(i_1,\ldots ,i_j+i_{j+1},\ldots, i_t-1,\ldots ,i_{n-1}-1,2,0)}\\
   &+(-1)^{n-2}\frac{(i_{n-2}-1)(i_{n-1}-2)}{i_{n-2}+i_{n-1}-3}\alpha_{(i_1,\ldots ,i_{n-2}+i_{n-1}-1,2,0)}\\
   &+(-1)^{n-1}\frac{i_{n-1}-2}{i_{n-1}}\alpha_{(i_1,\ldots ,i_{n-1}+1,0)}.
 \end{aligned}
 \]
 \[
 \begin{aligned}
(-1)^{n+1}\frac{2i_n-4}{i_{n}}\alpha_{(i_1,i_2,\ldots,i_{n})}&=\sum_{j=1}^{n-2}\frac{(-1)^ji_ji_{j+1}}{i_j+i_{j+1}-1}\Delta\alpha_{(i_1,\ldots,i_j+i_{j+1}-1,\ldots,i_{n}-1,2)}\\
&+\frac{(-1)^{n-1}i_{n-1}(i_{n}-1)}{i_{n-1}+i_{n}-2}\Delta\alpha_{(i_1,\ldots,i_{n-1}+i_{n}-2,2)}\\
  & +\sum^{n-2}_{j=1}(-1)^{j}\frac{i_j(i_j-1)}{i_j+i_{j+1}-1}\alpha_{(i_1,\ldots,i_j+i_{j+1}-1,\ldots,i_{n}-1,2)}\\
   &+(-1)^{n-1}\frac{i_{n-1}(i_{n-1}-1)}{i_{n-1}+i_{n}-2}\alpha_{(i_1,\ldots,i_{n-1}+i_{n}-2,2)}\\
  &+\sum^{n-3}_{t=1}\sum^{t+1}_{j=2}(-1)^{j}\frac{i_ji_{j+1}}{i_j+i_{j+1}-1}(i_t-1)\alpha_{(i_1,\ldots,i_j+i_{j+1}-1,\ldots,i_{n}-1,2)}\\
   &+\sum^{n-2}_{t=1}(-1)^{n-1}\frac{i_{n-1}(i_{n}-1)}{i_{n-1}+i_{n}-2}(i_t-1)\alpha_{(i_1,\ldots,i_{n-1}+i_{n}-2,2)}\\
    &+\sum^{n-2}_{j=1}(-1)^{j}i_j\frac{(i_j-2)(i_{j+1}-1)}{i_j+i_{j+1}-2}\alpha_{(i_1,\ldots ,i_t+i_{t+1}-1,\ldots ,i_{n}-1,2)}\\
    &+(-1)^{n-1}\frac{i_n(i_{n-1}-2)(i_{n}-2)}{i_{n-1}+i_{n}-3}\alpha_{(i_1,\ldots , ,i_{n-1}+i_{n}-2,2)}\\
    &+\sum^{n-2}_{j=1}(-1)^{j}i_{j+1}\frac{(i_j-1)(i_{j+1}-2)}{i_j+i_{j+1}-2}\alpha_{(i_1,\ldots ,i_t+i_{t+1}-1,\ldots ,i_{n}-1,2)}\\
    &+(-1)^{n-1}\frac{(i_n-1)(i_{n-1}-1)(i_{n}-3)}{i_{n-1}+i_{n}-3}\alpha_{(i_1,\ldots , ,i_{n-1}+i_{n}-2,2)}\\
       &+\sum^{n-3}_{j=1}\sum^{n-1}_{t=j+1}(-1)^{j}i_t\frac{(i_j-1)(i_{j+1}-1)}{i_j+i_{j+1}-1}\alpha_{(i_1,\ldots ,i_j+i_{j+1},\ldots, i_t-1,\ldots ,i_n-1,2)}\\
          &+\sum^{n-2}_{j=1}(-1)^{j}\frac{(i_n-1)(i_j-1)(i_{j+1}-1)}{i_j+i_{j+1}-1}\alpha_{(i_1,\ldots ,i_j+i_{j+1},\ldots ,i_n-1,2)}\\
           &+\sum^{n-1}_{j=1}(-1)^{j}\frac{2(i_j-1)(i_{j+1}-1)}{i_j+i_{j+1}-1}\alpha_{(i_1,\ldots ,i_j+i_{j+1},\ldots ,i_n-1,1)}\\
           &+(-1)^{n-1}\frac{2(i_{n-1}-1)(i_{n}-2)}{i_{n-1}+i_{n}-2}\alpha_{(i_1,\ldots ,i_{n-1}+i_n-1,1)}.
 \end{aligned}
 \]
 
  Let $\varphi_1\in \tilde \C^{n-1}, \psi_1\in \tilde \C^n$, such that
     \begin{align*}
     \varphi_1(i_1,i_2,\ldots,i_{n-1})&=\beta_{(i_1,i_2,\ldots,i_{n-1})},\\
     \psi_1(i_1,i_2,\ldots,i_{n})&=\sum^{n-1}_{j=1}(-1)^{j}\frac{(i_j-1)(i_{j+1}-1)}{i_j+i_{j+1}-1}\beta_{(i_1,\ldots,i_j+i_{j+1},\ldots,i_n)},
     \end{align*}
      \[
      \begin{aligned}
     \psi_1(i_1,i_2,\ldots,i_{n-2},1,0){}&=\sum^{n-2}_{j=1}(-1)^{j}\frac{(i_j-1)(i_{j+1}-1)}{i_j+i_{j+1}-1}\beta_{(i_1,\ldots,i_j+i_{j+1},\ldots,i_{n-2},1,0)}\\
     & +(-1)^{n-1}\beta_{(i_1,i_2,\ldots,i_{n-1},1)}.
     \end{aligned}
     \]
     
        Namely,
         \[
 \begin{aligned}
 (\Delta^{n-1}\varphi_1 - D^n\psi_1 )[i_1|i_2|\ldots|i_n]&=\sum_{j=1}^{n-1}\frac{(-1)^ji_ji_{j+1}}{i_j+i_{j+1}-1}\Delta\beta_{(i_1,\ldots,i_j+i_{j+1}-1,\ldots,i_n)}\\
  & +\sum^{n-1}_{j=1}(-1)^{j}\frac{i_j(i_j-1)}{i_j+i_{j+1}-1}\beta_{(i_1,\ldots,i_j+i_{j+1}-1,\ldots,i_n)}\\
  &+\sum^{n-1}_{j=2}\sum^{j-1}_{t=1}(-1)^{j}\frac{i_ji_{j+1}}{i_j+i_{j+1}-1}(i_t-1)\beta_{(i_1,\ldots,i_j+i_{j+1}-1,\ldots,i_n)}\\
   &+\sum^{n}_{t=3}\sum^{t-2}_{j=1}(-1)^{j}i_t\frac{(i_j-1)(i_{j+1}-1)}{i_j+i_{j+1}-1}\beta_{(i_1,\ldots ,i_j+i_{j+1},\ldots, i_t-1,\ldots ,i_n)}\\
     &+\sum^{n-1}_{t=1}(-1)^{t}i_t\frac{(i_t-2)(i_{t+1}-1)}{i_t+i_{t+1}-2}\beta_{(i_1,\ldots ,i_t+i_{t+1}-1,\ldots ,i_n)}\\
     &+\sum^{n-1}_{t=1}(-1)^{t}i_{t+1}\frac{(i_t-1)(i_{t+1}-2)}{i_t+i_{t+1}-2}\beta_{(i_1,\ldots ,i_t+i_{t+1}-1,\ldots ,i_n)},\\
 \end{aligned}
 \]
  \[
 \begin{aligned}
 (\Delta^{n-1}\varphi_1 - D^n\psi_1 )[i_1|i_2|\ldots|i_{n-2}|1|0]&=\sum_{j=1}^{n-2}\frac{(-1)^ji_ji_{j+1}}{i_j+i_{j+1}-1}\Delta\beta_{(i_1,\ldots,i_j+i_{j+1}-1,\ldots,i_{n-2},1,0)}\\
  & +\sum^{n-2}_{j=1}(-1)^{j}\frac{i_j(i_j-1)}{i_j+i_{j+1}-1}\beta_{(i_1,\ldots,i_j+i_{j+1}-1,\ldots,i_{n-2},1,0)}\\
  &+\sum^{n-2}_{j=1}\sum^{j-1}_{t=1}(-1)^{j}\frac{i_ji_{j+1}}{i_j+i_{j+1}-1}(i_t-1)\\
  &\times\beta_{(i_1,\ldots,i_j+i_{j+1}-1,\ldots,i_{n-2},1,0)}\\
  &+(-1)^{n-1}(\Delta+i_1+\ldots+i_{n-2}-n+1)\beta_{(i_1,i_2,\ldots,i_{n-2},0)}\\
   &+\sum^{n-1}_{t=3}\sum^{t-2}_{j=1}(-1)^{j}i_t\frac{(i_j-1)(i_{j+1}-1)}{i_j+i_{j+1}-1}\\
   &\times\beta_{(i_1,\ldots ,i_j+i_{j+1},\ldots, i_t-1,\ldots ,i_{n-2},1,0)}\\
   &+\sum^{n-2}_{t=1}(-1)^{t}i_t\frac{(i_t-2)(i_{t+1}-1)}{i_t+i_{t+1}-2}\beta_{(i_1,\ldots ,i_t+i_{t+1}-1,\ldots ,i_{n-2},1,0)}\\
   \end{aligned}
   \]
   \[
   \begin{aligned}
     &+\sum^{n-2}_{t=1}(-1)^{t}i_{t+1}\frac{(i_t-1)(i_{t+1}-2)}{i_t+i_{t+1}-2}\beta_{(i_1,\ldots ,i_t+i_{t+1}-1,\ldots ,i_{n-2},1,0)},
 \end{aligned}
 \]
where 
  \begin{itemize}
        \item for
    \[\beta_{(i_1,\ldots,i_{n-2},0)}=
    \begin{cases}
    \frac{\alpha_{(i_1,\ldots,i_{n-2},1,0)}}{\Delta+i_1+\ldots+i_{n-2}-n+1};& \Delta+i_1+\ldots+i_{n-2}-n+1\ne0,i_1,\ldots,i_{n-2}\ge2,\\
    0; & \text{otherwise},
    \end{cases}
    \]
 then $\Delta^{n-1}\varphi_1 $ is a coboundary in $\tilde \C^n$, and
   \begin{align*}
   (\Delta^{n-1}\varphi_1-D^n\psi_1)[v(i_1)\ldots v(i_{n-2})v(1)v(0)]=& \ \varphi(i_1,\ldots,i_{n-2},1,0);\\
   &\text{where} \ \Delta+i_1+\ldots+i_{n-2}-n+1\ne0.
   \end{align*}
   \end{itemize}
   
Let us repeat the construction
with new values of $\beta$'s to get  
$\varphi^{(i)}_1$ and $\psi^{(i)}_1$ for $i=1,2,3,4$ as following:
\begin{itemize}
   \item For 
    \[\beta_{(i_1,\ldots,i_{n-3},1,0)}=
    \begin{cases}
     \frac{(-1)^ni_1}{2}\alpha_{(i_1-1,\ldots,i_{n-3},2,1,0)};& \Delta+i_1+\ldots+i_{n-3}-n+3=0,i_1\ge3,\\
    0; & \text{otherwise},
    \end{cases}
    \]
    we have
    \begin{align*}
   (\Delta^{n-1}\varphi^{(1)}_1-D^n\psi^{(1)}_1)[v(i_1-1)\ldots v(i_{n-3})v(2)v(1)v(0)]=& \ \varphi(i_1-1,\ldots,i_{n-3},2,1,0);\\
   \text{where} \ \Delta+i_1+\ldots+i_{n-3}-n+3=& \ 0,\\
   (\Delta^{n-1}\varphi^{(1)}_1-D^n\psi^{(1)}_1)[v(i_1)\ldots v(i_n)]=& \ 0;\quad \text{otherwise}.
   \end{align*}
   \item Also if we put
   \[\beta_{(i_1,\ldots,i_{n-2},0)}=
    \begin{cases}
      \frac{(-1)^ni_{n-2}}{2i_{n-2}-4}\alpha_{(i_1,\ldots,i_{n-2}-1,2,0)}; & \Delta+i_{1}+\ldots+i_{n-2}-n+1=0, i_{n-2}\ge 3,\\
    0; & \text{otherwise},
    \end{cases}
    \]
    we get  new $\varphi^{(2)}_1$ and $\psi^{(2)}_1$ such that 
    \begin{align*}
   (\Delta^{n-1}\varphi^{(2)}_1-D^n\psi^{(2)}_1)[v(i_1)\ldots v(i_{n-2}-1)v(2)v(0)]=& \ \varphi(i_1,\dots,i_{n-2}-1,2,0);\\
   \text{where} \ \Delta+i_1+\ldots+i_{n-2}-n+1=& \ 0,\\
   (\Delta^{n-1}\varphi^{(2)}_1-D^n\psi^{(2)}_1)[v(i_1)\ldots v(i_n)]=& \ 0;\quad \text{otherwise}.
   \end{align*}
    \item  For 
\[
    \beta_{(i_1,\ldots,i_{n-2},1)}=
    \begin{cases}
     \frac{(-1)^{n-1}i_{n-2}}{2}\alpha_{(i_1,\ldots,i_{n-2}-1,2,1)}; & \Delta+i_{1}+\ldots+i_{n-2}-n+2=0 \ \mbox{and}\\
    &  i_{n-2}\ge3,\\
    0; & \text{otherwise},
    \end{cases}
    \]
    we have that
     \begin{align*}
   (\Delta^{n-1}\varphi^{(3)}_1-D^n\psi^{(3)}_1)[v(i_1)\ldots v(i_{n-2}-1)v(2)v(1)]=& \ \varphi(i_1,\dots,i_{n-2}-1,2,1);\\
   \text{where} \ \Delta+i_1+\ldots+i_{n-2}-n+2=& \ 0,\\
   (\Delta^{n-1}\varphi^{(3)}_1-D^n\psi^{(3)}_1)[v(i_1)\ldots v(i_n)]=& \ 0;\quad \text{otherwise}.
   \end{align*}
   \item Also if
   \[
    \beta_{(i_1,\ldots,i_{n-1})}=
    \begin{cases}
    \frac{(-1)^{n-1}\alpha_{(i_1,\ldots,i_{n-1},0)}}{\Delta+i_1+\ldots+i_{n-1}-n};& \Delta+i_1+\ldots+i_{n-1}-n\ne0,\\
    \frac{(-1)^{n-1}i_{n-1}}{2i_{n-1}-4}\alpha_{(i_1,\ldots,i_{n-2},i_{n-1}-1,2)}; & \Delta+i_1+\ldots+i_{n-1}-n=0, i_{n-1}\ge3,\\
    0; & \text{otherwise},
    \end{cases}
    \]
    we have
   \begin{align*}
   (\Delta^{n-1}\varphi^{(4)}_1-D^n\psi^{(4)}_1)[v(i_1)\ldots v(i_{n-1})v(1)]=& \ \varphi(i_1,\ldots,i_{n-1},1);\\
   \text{where} \ \Delta+i_1+\ldots+i_{n-1}-n\ne & \ 0,\\
   (\Delta^{n-1}\varphi^{(4)}_1-D^n\psi^{(4)}_1)[v(i_1)\ldots v(i_{n-1}-1)v(2)]=& \ \varphi(i_1,\ldots,i_{n-1}-1,2);\\
   \text{where} \ \Delta+i_1+\ldots+i_{n-1}-n=& \ 0\\
   (\Delta^{n-1}\varphi^{(4)}_1-D^n\psi^{(4)}_1)[v(i_1)\ldots v(i_n)]=& \ 0; \quad \text{otherwise}.
   \end{align*}
   \end{itemize}
   Hence, every $n$-cocycle is a coboundary.
   \end{proof}
  \begin{theorem} 
  For a conformal module $M_{(0,0)}$ over the 
associative conformal algebra $U(2)$ we have
\[ 
\dim_\Bbbk\Homol^n(U(3), M_{(0,0)})=
\begin{cases}
1, & n=1,\\
2, & n=2,\\
1, & n=3,\\
0, & n\ge4.
\end{cases}
\]
\end{theorem}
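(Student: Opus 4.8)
The module $M_{(0,0)}$ is exactly the case $\alpha=0$, $\Delta=0$, which is excluded from Theorems \ref{them:H^1}, \ref{thm:4.3}, \ref{them:H^2} and \ref{them:H^n}, where either $\Delta\ne0$ or $\alpha\ne0$ was assumed. For this module the generator acts by $v(0)=\partial$ and $v(n)=0$ for $n\ge1$. The plan is to keep the Anick chains of Corollary \ref{cor:U(3)AnickChains}, the differential of Theorem \ref{Anickdiff}, and the reduced differentials $\Delta^n\varphi-D^{n+1}\psi$ already computed in the proofs of Theorems \ref{them:H^2} and \ref{them:H^n}, and simply specialize every formula to $\alpha=0$, $\Delta=0$. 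By Lemma \ref{lem:Reduction} a cochain is again recorded by a scalar family on the chains, and every summand carrying a factor $\Delta$ or $\alpha$ disappears, so the cocycle and coboundary equations collapse to their ``constant'' parts.

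The decisive observation is that, once $\Delta=0$, all the resonance denominators $\Delta+i_1+\cdots+i_k-c$ occurring in the cocycle recursions and in the coboundary constructions of Theorems \ref{them:H^2} and \ref{them:H^n} reduce to pure index sums $i_1+\cdots+i_k$ measured against a small constant $c$. Since every relevant index satisfies $i_j\ge2$, such a sum is at least $2k$; inspecting the constants $c$ that actually occur in Theorem \ref{them:H^n} shows that each denominator is strictly positive as soon as $n\ge4$. Hence a resonance — a slot where the recursion cannot be solved for the next scalar, or where the coboundary primitive $\psi$ cannot be constructed — can occur only in degrees $1$, $2$, $3$, and only at the shortest chains, with all indices equal to $2$.

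First I would run degrees $1,2,3$ directly from the specialized formulas. In degree $1$ the condition $-(\Delta+n-2)\alpha_n=0$ degenerates at $n=2$, so $\alpha_2$ survives, while $\alpha_0=0$ is forced and the coboundary value $(\Delta-1)\beta=-\beta$ still removes $\alpha_1$; thus $\dim\Homol^1=1$. In degree $2$ the chain $[2|1|0]$ gives $-(2-2)\alpha_{(2,0)}=0$, leaving $\alpha_{(2,0)}$ unconstrained, and the coboundary slot $\beta_2=-\alpha_{(2,1)}/\Delta$ of Theorem \ref{them:H^2} breaks down at $\Delta=0$, so $\alpha_{(2,1)}$ is no longer a coboundary; these two independent classes give $\dim\Homol^2=2$. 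In degree $3$ the sole surviving resonance is the slot $\beta_{(k,0)}=-\alpha_{(k,1,0)}/(\Delta+k-2)$ at $k=2$, producing the single class $\alpha_{(2,1,0)}$, so $\dim\Homol^3=1$. For $n\ge4$ I would invoke the no-resonance fact above: every denominator is positive, so the non-resonant coboundary construction of Theorem \ref{them:H^n} applies to each $n$-cocycle and exhibits it as a coboundary, giving $\Homol^n=0$.

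The main obstacle is the resonant low-degree bookkeeping. At a resonance the coefficient of the would-be new scalar vanishes, so the recursion cannot be solved; instead one must verify directly that the resonant cochain is a genuine cocycle — its only occurrences in $\Delta^n\varphi$ are in $\partial$-terms absorbed by the chosen $\psi$, or in index-shifted terms supported on non-chains such as $[1|2]$ — and that it is not a coboundary, because every coboundary is seen to vanish on the distinguishing chain ($[2]$, $[2|0]$, $[2|1]$, $[2|1|0]$ respectively). Confirming the mutual independence of these four classes and the exact counts $1,2,1$, and symmetrically ruling out any hidden resonance among the several coboundary slots for $n\ge4$, is the delicate part of the argument.
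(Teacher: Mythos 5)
Your proposal is correct and follows essentially the same route as the paper: specialize the reduced differentials $\Delta^n\varphi-D^{n+1}\psi$ already computed in Theorems~\ref{them:H^1}, \ref{them:H^2} and \ref{them:H^n} to $\alpha=\Delta=0$, locate the resonant slots where the coboundary recursion divides by $n-2$, $\Delta$, or $i_1+\cdots+i_k-c$, and read off the surviving classes $\alpha_2$, $\alpha_{(2,0)}$, $\alpha_{(2,1)}$, $\alpha_{(2,1,0)}$, with positivity of all denominators for $n\ge4$ killing the higher cohomology. The only caveat is that yours is a plan rather than a worked verification, but the classes you identify and the resonance analysis coincide with the paper's.
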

    
\begin{proof}
We have proven in Theorem \ref{them:H^1} that
\begin{align*}
(\Delta^1\varphi - D^2\psi )[1|0] &= -\alpha_0,\\
(\Delta^1\varphi - D^2\psi )[n|1] &= -(n-2)\alpha_n, \quad n\ge 2.
\end{align*}

Therefore, if $\Delta^1\varphi -D^1\psi$ is a cocycle in $\C^1$ then $\alpha_n=0$ for all $n\ge 0$
except, maybe, for $n=1,2$.

Coboundary cocycles in $\tilde \C^1$ are given by 
$\Delta^0 h$, where $h\in \Hom_\Lambda (\Lambda, M)$. 
Modulo $D^0\tilde C^0$, we may assume $h(1) = \beta u$, $\beta=-\alpha_1 \in \Bbbk $.
Then 
$(\Delta^0h)[n] = v(n)\beta u$. 
Choose $\psi \in \tilde \C^1$ such that 
$\psi[0]=\beta  u$ and 
$\psi[n] = 0$ for $n\ge 1$. 
Then 
\[
(\Delta^0 h - D^1\psi )[n]
=\begin{cases}
 0, & n=0, \\
 -\beta u, & n=1, \\
 0, & n\ge 2,
\end{cases}
\]
and cocycles are determined by $\alpha_2.$

We have proven in Theorem \ref{them:H^2} that:
    \begin{align*}
  (\Delta^2\varphi-D^3\psi)(n,m,p)&=-\frac{n(n-1)}{n+m-1}\alpha_{(n+m-1,p)}
    +\frac{(n-1)mp}{m+p-1}\alpha_{(n,m+p-1)}\\
    &+\frac{m(m-1)}{m+p-1}\alpha_{(n,m+p-1)}
 -\frac{n(n-2)(m-1)}{n+m-2}\alpha_{(n+m-1,p)}\\
 &-\frac{m(n-1)(m-2)}{n+m-2}\alpha_{(n+m-1,p)}+\frac{m(m-2)(p-1)}{m+p-2}\alpha_{(n,m+p-1)}\\
  &-\frac{p(n-1)(m-1)}{n+m-1}\alpha_{(n+m,p-1)}+\frac{p(m-1)(p-2)}{m+p-2}\alpha_{(n,m+p-1)},\\ 
    (\Delta^2\varphi-D^3\psi)(n,1,0)&=-(n-2) \alpha_{(n,0)}.
    \end{align*}
    
Hence, $\varphi -D^3\tilde \C^3$ is a $2$-cocycle in $\C^2$
if and only if    
   \begin{align*}
   \alpha_{(n,0)}&=0;\quad n\ge1; \quad n\ne2,\\
  -(n+m-3)\alpha_{(n,m)}&=-\frac{n(n-1)}{n+m-1}\alpha_{(n+m-1,1)}
 -\frac{n(n-2)(m-1)}{n+m-2}\alpha_{(n+m-1,1)}\\
 &-\frac{m(n-1)(m-2)}{n+m-2}\alpha_{(n+m-1,1)}.
   \end{align*} 

Therefore, cocycles in $\C^2$ are determined by $\alpha_{(n+m-1,1)}$ , $\alpha_{(2,1)}$ and $\alpha_{(2,0)}$ for all $n,m\ge2$.
  
  Choose 
\[
\varphi_1\in \tilde C^1,\quad \psi_1 \in \tilde C^2 = \Hom_\Lambda (\mathrm A_2, M),
\]
such that 
\begin{align*}
\varphi_1[n] =\beta_n,\quad \psi_1[1|0] =& \ \beta_1, \quad \psi_1[n|m] = \dfrac{(n-1)(m-1)}{n+m-1} \beta_{n+m},\\
(\Delta^1\varphi_1-D^2\psi_1)[n|m]=& 
 - \dfrac{n(n-1)}{n+m-1} \beta_{n+m-1}
-\dfrac{n(n-2)(m-1)}{n+m-2} \beta_{n+m-1}\\
&-\dfrac{m(n-1)(m-2)}{n+m-2} \beta_{n+m-1}\\
(\Delta^1\varphi_1 - D^2\psi_1 )[1|0] =& -\beta_0,
\end{align*}
 where
   \[
    \beta_n=-\dfrac{\alpha_{(n,1)}}{n-2};\quad n\ge3.
    \]
    
Then $\Delta^1\varphi_1$ is coboundary in $\tilde \C^2$ and
\begin{align*}
(\Delta^1\varphi_1 - D^2\psi_1 )[n|1] &= (-n+2)\beta_n=\alpha_{(n,1)}=\varphi(n,1);\quad n\ge3,\\
(\Delta^1\varphi_1 - D^2\psi_1 )[2|1]&=0,\\
(\Delta^1\varphi_1 - D^2\psi_1 )[2|0]&=0.
\end{align*}

Hence, 
$\varphi -\Delta^1\varphi_1 \in D^2\tilde \C^2$, so 
every cocycle in $\C^2$ is a coboundary or zero except $\alpha_{(2,0)}$ and $\alpha_{(2,1)}$.

We have proven in Theorem \ref{them:H^n} that:
       \[
    \begin{aligned}
     -(n+m-3)\alpha_{(n,m,0)}&{}=-\frac{n(n-1)}{n+m-1}\alpha_{(n+m-1,1,0)} -\frac{n(n-2)(m-1)}{n+m-2}\alpha_{(n+m-1,1,0)}\\
     &-\frac{m(n-1)(m-2)}{n+m-2}\alpha_{(n+m-1,1,0)},
    \end{aligned}
    \]
    \[
    \begin{aligned}
      (n+m+p-4)\alpha_{(n,m,p)}&=-\frac{n(n-1)}{n+m-1}\alpha_{(n+m-1,p,1)}+\frac{(n-1)mp}{m+p-1}\alpha_{(n,m+p-1,1)}\\
      &+\frac{m(m-1)}{m+p-1}\alpha_{(n,m+p-1,1)}-\frac{n(n-2)(m-1)}{n+m-2}\alpha_{(n+m-1,p,1)}\\
      &-\frac{m(n-1)(m-2)}{n+m-2}\alpha_{(n+m-1,p,1)}+\frac{m(m-2)(p-1)}{m+p-2}\alpha_{(n,m+p-1,1)}\\
      &-\frac{p(n-1)(m-1)}{n+m-1}\alpha_{(n+m,p-1,1)}+\frac{p(m-1)(p-2)}{m+p-2}\alpha_{(n,m+p-1,1)}\\
      &-\frac{(n-1)(m-1)}{n+m-1}\alpha_{(n+m,p,0)}+\frac{(m-1)(p-1)}{m+p-1}\alpha_{(n,m+p,0)}.
    \end{aligned}
    \]
    
     Choose 
\[
\varphi_1\in \tilde C^2,\quad \psi_1 \in \tilde C^3 = \Hom_\Lambda (\mathrm A_3, M)
\]
such that
\begin{align*}
  (\Delta^2\varphi_1-D^3\psi_1)(n,m,1)&=-\frac{n(n-1)}{n+m-1}\beta_{(n+m-1,1)}
    +(n+m-3) \beta_{(n,m)}\\
 &-\frac{n(n-2)(m-1)}{n+m-2}\beta_{(n+m-1,1)}-\frac{m(n-1)(m-2)}{n+m-2}\beta_{(n+m-1,1)}\\
  &-\frac{(n-1)(m-1)}{n+m-1}\beta_{(n+m,0)},\\
  (\Delta^2\varphi_1-D^3\psi_1)(n,1,0)&=-(n-2)\beta_{(n,0)},
   \end{align*}
   where 
   \[\beta_{(n,0)}=
    \begin{cases}
    -\frac{\alpha_{(n,1,0)}}{n-2}; & n\ge3, \\
   0; & \text{otherwise}.
    \end{cases}
    \]

    Then
    \[
    (\Delta^2\varphi_1-D^3\psi_1)(n,1,0)=\alpha_{(n,1,0)}=\varphi_{(n,1,0)};\quad n\ne2.
    \]
    
    Let us repeat the construction
with new values of $\beta$'s to get $\varphi'_1$ and $\psi'_1$:
     \[\beta_{(n,m)}=
    \begin{cases}
    \frac{\alpha_{(n,m,1)}}{n+m-3}; & n,m\ge2,\\
    0; & \text{otherwise},
    \end{cases}
    \]
    \begin{align*}
    (\Delta^2\varphi'_1-D^3\psi'_1)(n,1,0)&=0;\quad n\ne2,\\
    (\Delta^2\varphi'_1-D^3\psi'_1)(n,m,1)&=\alpha_{(n,m,1)}=\varphi_{(n,m,1)}.
    \end{align*}
    
Hence, 
$\varphi -\Delta^2\varphi_1 \in D^3\tilde \C^3$, so 
every cocycle in $\C^3$ is a coboundary or zero  except $\alpha_{(2,1,0)}$.

We have proven in Theorem \ref{them:H^n} that:
     for all $n,m,p,q\ge 2$ 
    \[
    \begin{aligned}
      -(n+m+p+q-5)\alpha_{(n,m,p,q)}&=
      -\frac{n(n-1)}{n+m-1}\alpha_{(n+m-1,p,q,1)}-\frac{(p-1)(q-1)}{p+q-1}\alpha_{(n,m,p+q,0)}\\
     &+\frac{m(m-1)}{m+p-1}\alpha_{(n,m+p-1,q,1)}-\frac{p(p-1)}{p+q-1}\alpha_{(n,m,p+q-1,1)} \\
      &+\frac{(n-1)mp}{m+p-1}\alpha_{(n,m+p-1,q,1)}-\frac{(n-1)pq}{p+q-1}\alpha_{(n,m,p+q-1,q,1)}\\
      &-\frac{(m-1)pq}{p+q-1}\alpha_{(n,m,p+q-1,q,1)}-\frac{n(n-2)(m-1)}{n+m-2}\alpha_{(n+m-1,p,q,1)}\\
      &-\frac{m(n-1)(m-2)}{n+m-2}\alpha_{(n+m-1,p,q,1)}\\
      &+\frac{m(m-2)(p-1)}{m+p-2}\alpha_{(n,m+p-1,q,1)}\\
      &-\frac{p(n-1)(m-1)}{n+m-1}\alpha_{(n+m,p-1,q,1)}\\
      &+\frac{p(m-1)(p-2)}{m+p-2}\alpha_{(n,m+p-1,q,1)}\\
      &-\frac{p(p-2)(q-1)}{p+q-2}\alpha_{(n,m,p+q-1,1)}\\
      &-\frac{q(n-1)(m-1)}{n+m-1}\alpha_{(n+m,p,q-1,1)}\\
      &+\frac{q(m-1)(p-1)}{m+p-1}\alpha_{(n,m+p,q-1,1)}\\
      &-\frac{q(p-1)(q-2)}{p+q-2}\alpha_{(n,m,p+q-1,1)}\\
      &-\frac{r(n-1)(m-1)}{n+m-1}\alpha_{(n+m,p,q,0)}\\
      &+\frac{(m-1)(p-1)}{m+p-1}\alpha_{(n,m+p,q,0)},
    \end{aligned}
    \]
    \begin{align*}
     (n+m+p-4)\alpha_{(n,m,p,0)}&=
      -\frac{n(n-1)}{n+m-1}\alpha_{(n+m-1,p,1,0)}+\frac{m(m-1)}{m+p-1}\alpha_{(n,m+p-1,1,0)} \\
      &-\frac{n(n-2)(m-1)}{n+m-2}\alpha_{(n+m-1,p,1,0)}+\frac{(n-1)mp}{m+p-1}\alpha_{(n,m+p-1,1,0)}\\
      \end{align*}

      \begin{align*}
      &-\frac{m(n-1)(m-2)}{n+m-2}\alpha_{(n+m-1,p,1,0)}+\frac{m(m-2)(p-1)}{m+p-2}\alpha_{(n,m+p-1,1,0)}\\
      &-\frac{p(n-1)(m-1)}{n+m-1}\alpha_{(n+m,p-1,1,0)}+\frac{p(m-1)(p-2)}{m+p-2}\alpha_{(n,m+p-1,1,0)}.
    \end{align*}
    
Let $\varphi_1\in \tilde C^3$ and $\psi_1\in \tilde C^4$ such that 
     \begin{align*}
     \varphi_1[v(n)v(m)v(p)]&=\beta_{(n,m,p)};\quad n\ge2, m\ge1, p\ge0,\\
      \psi_1[v(n)v(m)v(p)v(q)]&=\frac{(n-1)(m-1)}{n+m-1}\beta_{(n+m,p,q)}\\
      &-\frac{(m-1)(p-1)}{m+p-1}\beta_{(n,m+p,q)}\\
      &+\frac{(p-1)(q-1)}{p+q-1}\beta_{(n,m,p+q)};\quad n,m,p\ge2,q\ge 0,\\
      \psi_1[v(n)v(m)v(1)v(0)]&=\frac{(n-1)(m-1)}{n+m-1}\beta_{(n+m,1,0)}-\beta_{(n,m,1)};\quad n,m\ge2.
    \end{align*}
       where 
   \[\beta_{(n,m,0)}=
    \begin{cases}
    \frac{\alpha_{(n,m,1,0)}}{n+m-3}, & n,m\ge2, \\
   0; & \text{otherwise}.
    \end{cases}
    \]
    
    Then
    \[
    (\Delta^3\varphi_1-D^4\psi_1)(n,m,1,0)=\alpha_{(n,m,1,0)}=\varphi_{(n,m,1,0)};\quad n,m\ge2.
    \]
    
    Let us repeat the construction
with new values of $\beta$'s to get $\varphi'_1$ and $\psi'_1$:
     \[\beta_{(n,m,p)}=
    \begin{cases}
    -\frac{\alpha_{(n,m,p,1)}}{n+m+p-4}; & n,m,p\ge2,\\
    0; & \text{otherwise},
    \end{cases}
    \]
    \begin{align*}
    (\Delta^3\varphi'_1-D^4\psi'_1)(n,m,1,0)&=0;\quad n,m\ge2,\\
    (\Delta^3\varphi'_1-D^4\psi'_1)(n,m,p,1)&=\alpha_{(n,m,p,1)}=\varphi_{(n,m,p,1)}.
    \end{align*}
    
Hence $\varphi -\Delta^3\varphi_1 \in D^4\tilde \C^4$, so every cocycle in $\C^4$ is a coboundary.

We have proven in Theorem \ref{them:H^n} that: for all $i_1,\ldots,i_n\ge 2$, $i_{n+1}\ge0$,
 \begin{align*}
(-1)^{n+1} (i_1+\ldots+i_n-n-1 )\alpha_{(i_1,i_2,\ldots,i_n)}&=
   \sum^{n-1}_{j=1}(-1)^{j}\frac{i_j(i_j-1)}{i_j+i_{j+1}-1}\alpha_{(i_1,\ldots,i_j+i_{j+1}-1,\ldots,i_n,1)}\\
  &+\sum^n_{j=2}\sum^{j-1}_{t=1}(-1)^{j}\frac{i_ji_{j+1}}{i_j+i_{j+1}-1}(i_t-1)\\
&\times\alpha_{(i_1,\ldots,i_j+i_{j+1}-1,\ldots,i_n,1)}\\
\end{align*}
\begin{align*}
   &+\sum^{n+1}_{t=3}\sum^{t-2}_{j=1}(-1)^{j}i_t\frac{(i_j-1)(i_{j+1}-1)}{i_j+i_{j+1}-1}\alpha_{(i_1,\ldots ,i_j+i_{j+1},\ldots, i_t-1,\ldots ,i_n,1)}\\
     &+\sum^n_{t=1}(-1)^{t}i_t\frac{(i_t-2)(i_{t+1}-1)}{i_t+i_{t+1}-2}\alpha_{(i_1,\ldots ,i_t+i_{t+1}-1,\ldots ,i_n,1)}\\
     &+\sum^{n}_{t=1}(-1)^{t}i_{t+1}\frac{(i_t-1)(i_{t+1}-2)}{i_t+i_{t+1}-2}\alpha_{(i_1,\ldots ,i_t+i_{t+1}-1,\ldots ,i_n,1)},\\
 \end{align*}
  \[
 \begin{aligned}
(-1)^n (i_1+\ldots+i_{n-1}-n )\alpha_{(i_1,i_2,\ldots,i_{n-1},0)}&=
  \sum^{n-2}_{j=1}(-1)^{j}\frac{i_j(i_j-1)}{i_j+i_{j+1}-1}\alpha_{(i_1,\ldots,i_j+i_{j+1}-1,\ldots,i_{n-1},1,0)}\\
  &+\sum^{n-2}_{j=1}\sum^{j-1}_{t=1}(-1)^{j}\frac{i_ji_{j+1}}{i_j+i_{j+1}-1}(i_t-1)\\
  &\times\alpha_{(i_1,\ldots,i_j+i_{j+1}-1,\ldots,i_{n-1},1,0)}\\
   &+\sum^{n-1}_{t=3}\sum^{t-2}_{j=1}(-1)^{j}i_t\frac{(i_j-1)(i_{j+1}-1)}{i_j+i_{j+1}-1}\\
   &\times\alpha_{(i_1,\ldots ,i_j+i_{j+1},\ldots, i_t-1,\ldots ,i_{n-1},1,0)}\\
    &+\sum^{n-2}_{t=1}(-1)^{t}i_t\frac{(i_t-2)(i_{t+1}-1)}{i_t+i_{t+1}-2}\\
    &\times\alpha_{(i_1,\ldots ,i_t+i_{t+1}-1,\ldots ,i_{n-1},1,0)}\\
     &+\sum^{n-2}_{t=1}(-1)^{t}i_{t+1}\frac{(i_t-1)(i_{t+1}-2)}{i_t+i_{t+1}-2}\\
     &\times\alpha_{(i_1,\ldots ,i_t+i_{t+1}-1,\ldots ,i_{n-1},1,0)}.
 \end{aligned}
 \]
 
 Let $\varphi_1\in \tilde C^{n-1}, \psi_1\in \tilde C^n$, such that
     \[
     \varphi_1(i_1,i_2,\ldots,i_{n-1})=\beta_{(i_1,i_2,\ldots,i_{n-1})},
     \]
     \begin{align*}
     \psi_1(i_1,i_2,\ldots,i_{n})&=\sum^{n-1}_{j=1}(-1)^{j}\frac{(i_j-1)(i_{j+1}-1)}{i_j+i_{j+1}-1}\beta_{(i_1,\ldots,i_j+i_{j+1},\ldots,i_n)}\\
     \psi_1(i_1,i_2,\ldots,i_{n-2},1,0)&=\sum^{n-2}_{j=1}(-1)^{j}\frac{(i_j-1)(i_{j+1}-1)}{i_j+i_{j+1}-1}\beta_{(i_1,\ldots,i_j+i_{j+1},\ldots,i_{n-2},1,0)}\\
     & +(-1)^{n-1}\beta_{(i_1,i_2,\ldots,i_{n-1},1)},
     \end{align*}
where 
   \[\beta_{(i_1,i_2,\ldots,i_{n-2},0)}=
    \begin{cases}
    \frac{(-1)^n\alpha_{(i_1,i_2,\ldots,i_{n-2},1,0)}}{i_1+\ldots+i_{n-2}-n+1};& i_1,\ldots,i_{n-2}\ge2,\\
   0; & \text{otherwise}.
    \end{cases}
    \]
    
    Then
    \[
   (\Delta^{n-1}\varphi_1-D^n\psi_1)(i_1,i_2,\ldots,i_{n-2},1,0)=\varphi(i_1,i_2,\ldots,i_{n-2},1,0);\quad i_1,\ldots,i_{n-2}\ge2.
   \]
   
 Let us repeat the construction
with new values of $\beta$'s to get $\varphi'_1$ and $\psi'_1$:
     \[\beta_{(i_1,\ldots,i_{n-1})}=
    \begin{cases}
   -\frac{(-1)^n\alpha_{(i_1,\ldots,i_{n-1},1)}}{i_1+\ldots+i_{n-1}-n};& i_1,\ldots,i_{n-1}\ge2,\\
    0; & \text{otherwise}.
    \end{cases}
    \]
 \begin{align*}
   (\Delta^{n-1}\varphi'_1-D^n\psi'_1)(i_1,i_2,\ldots,i_{n-2},1,0)&=0;\quad i_1,\ldots,i_{n-2}\ge2,\\
   (\Delta^{n-1}\varphi'_1-D^n\psi'_1)(i_1,i_2,\ldots,i_{n-2},i_{n-1},1)&=\varphi(i_1,i_2,\ldots,i_{n-2},i_{n-1},1);\quad i_1,\ldots,i_{n-1}\ge2.
   \end{align*}
   
    Hence $\varphi -\Delta^{n-1}\varphi_1 \in D^n\tilde \C^n$, so 
every cocycle in $\C^n$ is a coboundary.
  \end{proof} 
  
  \begin{corollary}\label{cor:AllFinite}
Let $M$ be a finite module over $U(3)$. Then $\Homol^k(U(3),M)=0$ for all $k\ge 4$.
\end{corollary}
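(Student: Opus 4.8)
The plan is to prove the vanishing by d\'evissage, reducing to the irreducible building blocks whose cohomology has just been computed. A finite $U(3)$-module $M$ is finitely generated over the principal ideal domain $\Bbbk[\partial]$, hence has finite length and admits a finite filtration $0=M_0\subset M_1\subset\cdots\subset M_r=M$ by $U(3)$-submodules with irreducible quotients $M_i/M_{i-1}$. By the classification of finite irreducible conformal $\Vir$-modules \cite{ChengKac} together with the fact that every $M_{(\alpha,\Delta)}$ carries a $U(3)$-action, each such quotient is isomorphic either to some $M_{(\alpha,\Delta)}$ with $\Delta\ne0$ or to the one-dimensional trivial module $\Bbbk$. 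Thus it suffices to prove $\Homol^k(U(3),N)=0$ for $k\ge4$ when $N$ runs over these irreducibles, and then to propagate the vanishing along the filtration.

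To propagate it, I would first record that the reduced cohomology $\Homol^\bullet(U(3),-)$ is a cohomological $\delta$-functor in the module argument. Indeed $\C^\bullet(U(3),M)=\tilde\C^\bullet(U(3),M)/\partial\tilde\C^\bullet(U(3),M)$, and the operator $\partial$ acts on $\tilde\C^n(U(3),M)$ as $\varphi\mapsto(\partial+\sum_i\lambda_i)\varphi$ on $M[\bar\lambda]$; this is injective, since $M[\bar\lambda]$ is free over $\Bbbk[\bar\lambda]$ and a comparison of top-degree terms in $\bar\lambda$ rules out a kernel. Consequently a short exact sequence $0\to M'\to M\to M''\to0$ of $U(3)$-modules gives a short exact sequence of basic complexes, and the snake lemma applied to the injective vertical maps $\partial$ yields a short exact sequence of reduced complexes $0\to\C^\bullet(M')\to\C^\bullet(M)\to\C^\bullet(M'')\to0$, hence a long exact cohomology sequence. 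In particular, if $\Homol^k$ vanishes on the sub- and the quotient-module it vanishes on the middle one, which drives the induction on $r$.

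It remains to treat the irreducibles. For $M_{(\alpha,\Delta)}$ with $\Delta\ne0$, Theorem \ref{thm:4.3} (case $\alpha\ne0$) and Theorem \ref{them:H^n} (case $\alpha=0$) already give $\Homol^k(U(3),M_{(\alpha,\Delta)})=0$ for all $k\ge3$, so in particular for $k\ge4$. For the trivial module I would use the short exact sequence of $U(3)$-modules
\[
0\longrightarrow M_{(0,1)}\longrightarrow M_{(0,0)}\longrightarrow\Bbbk\longrightarrow0,
\]
in which the submodule $\partial M_{(0,0)}=\Bbbk[\partial]\,\partial u$ is isomorphic to $M_{(0,1)}$ and the quotient is trivial. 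The associated long exact sequence, combined with $\Homol^k(U(3),M_{(0,1)})=0$ for $k\ge3$ (Theorem \ref{them:H^n}) and $\Homol^k(U(3),M_{(0,0)})=0$ for $k\ge4$ (the preceding theorem), forces $\Homol^k(U(3),\Bbbk)\cong\Homol^k(U(3),M_{(0,0)})=0$ for all $k\ge4$.

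The main obstacle is the d\'evissage set-up rather than any fresh computation: one must be certain that the finite irreducible $U(3)$-modules are exhausted by the $M_{(\alpha,\Delta)}$ with $\Delta\ne0$ and the trivial module (so that no composition factor escapes the case analysis), and that reduced conformal Hochschild cohomology genuinely carries long exact sequences in the module variable. Once the injectivity of $\partial$ on cochains is in hand and the Cheng--Kac classification is invoked, the induction along a composition series finishes the argument, and the bound $k\ge4$ is precisely the threshold at which every building block has vanishing cohomology.
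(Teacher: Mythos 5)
Your overall route is the same as the paper's (d\'evissage along a filtration, long exact sequences, and the already-computed cohomology of the building blocks), but the set-up of the d\'evissage contains a genuine error. A finite conformal module, i.e.\ a finitely generated $\Bbbk[\partial]$-module, does \emph{not} have finite length in the category of conformal modules, so it need not admit a finite filtration with irreducible quotients: the trivial torsion-free module $N=\Bbbk[\partial]u$ with $(v\oo\lambda u)=0$ is itself a finite $U(3)$-module, and its conformal submodules $\partial^k\Bbbk[\partial]u$ form an infinite strictly descending chain, so $N$ cannot be assembled from finitely many irreducibles. This module is precisely the extra case the paper's filtration (quoted from \cite[Lemma 3.3]{kolesnikov}) allows as a subquotient alongside the $M_{(\alpha,\Delta)}$ and the torsion (hence trivial finite-dimensional) pieces, and the paper disposes of it separately by rerunning the Anick-resolution computation of Theorem~\ref{them:H^n} with these coefficients (equivalently, by comparison with the scalar case treated in \cite{Akl}). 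Your case analysis omits it entirely, so your induction does not cover, for instance, $M=\Bbbk[\partial]u$ with zero action; you cannot reach it from $M_{(\alpha,\Delta)}$ and $\Bbbk$ by finitely many extensions. Note also that $\Bbbk[\partial]u$ with zero action is not isomorphic to any $M_{(\alpha,0)}$, whose action $(v\oo\lambda u)=(\alpha+\partial)u$ is nonzero, so it cannot be absorbed into that family either.

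The rest of your argument is sound and in places nicer than the paper's. The justification of the long exact sequence via injectivity of $\partial=(\partial+\sum_i\lambda_i)$ on the basic cochain spaces is a reasonable way to make the $\delta$-functor structure explicit (the paper simply invokes the long exact sequence). Your treatment of the scalar trivial module through the extension $0\to M_{(0,1)}\to M_{(0,0)}\to\Bbbk\to0$, with $\partial M_{(0,0)}\cong M_{(0,1)}$, is correct and elegantly replaces the paper's appeal to the scalar-coefficient computation of \cite{Akl}; the same trick with $M_{(\alpha,0)}$ handles one-dimensional trivial modules on which $\partial$ acts by a nonzero scalar. To close the gap you must either use the correct filtration statement, in which the subquotients are of type $M_{(\alpha,\Delta)}$, trivial torsion-free, or torsion, and then prove $\Homol^k(U(3),\Bbbk[\partial]u)=0$ for $k\ge4$ directly (as the paper does), or find an extension argument that reduces the trivial free module to modules whose cohomology is already known.
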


\begin{proof}
Let $M$ be a finite conformal module over $U(3)$.
Then, in particular, $M$ is a finite module over the Virasoro Lie conformal algebra $\Vir $. 
Hence there exists a chain of $\Vir$-submodules
 (see, e.g., \cite[Lemma 3.3]{kolesnikov})
 \[
 0= M_{-1} \subset M_0 \subset \ldots \subset M_n=M,
 \]
where $M_i/M_{i-1}$,  $i=0,\ldots,n$, is 
either isomorphic to a $\Vir $-module $M_{(\alpha,\Delta)}$,
or trivial torsion-free module $\Bbbk [\partial ]u$ with 
$(v\oo\lambda u)=0$, 
or coincides with its torsion (hence, trivial).
Note that a $\Vir$-submodule of an $U(3)$-module $M$ is itself a 
$U(3)$-module, therefore, all $M_i$ are $U(3)$-modules and 
so are $M_i/M_{i-1}$. Hence, all irreducible quotients 
are of type $M_{(\alpha, \Delta)}$.

The case of torsion module was considered in \cite{Akl}. 
There is no difference between the scalar $U(3)$-module 
$\Bbbk $ 
and the trivial torsion-free module $M=\Bbbk [\partial ]u$. 
One may also apply the technique 
of Theorem~\ref{them:H^n} to the case of the trivial module $M$ as above 
to prove that $\Homol^k(U(3),M)=0$ for all $k\ge 4$.
Finally, both Theorem \ref{thm:4.3} and Theorem \ref{them:H^n} imply
that
 \[
\Homol^k(U(3), M_i/M_{i-1} )=0, \quad i=0,\dots, n, 
\]
for all $k\ge 4$.
The short exact sequence of modules 
\[
0\to M_{i-1} \to M_i \to 
M_i/M_{i-1} \to 0
\]
leads to the long exact sequence of cohomology groups
\[
\begin{aligned}
\dots & \to \Homol^k(U(3),M_{i-1})
        \to \Homol^k(U(3),M_i)
        \to \Homol^k (U(3), {M_i/M_{i-1}}) \\
& \to \Homol^{k+1}(U(3),M_{i-1}) 
  \to \Homol^{k+1}(U(3),M_i)
        \to \Homol^{k+1} (U(3), {M_i/M_{i-1}}) \\
&        \to \dots 
\end{aligned}
\]
for every $i=1,\ldots,n$. 
Since  
\[
\Homol^k(U(3),M_0)=0,\quad \Homol^k(U(3),{M_1/M_0})=0
\]
for all  $k\ge4$, we obtain $\Homol^k(U(3), M_1)=0$.
Proceed by induction on $i=1,\dots, n$ to obtain
$\Homol^k(U(3),M_n)=0$, for all $k\ge4$.
\end{proof}

\subsection*{Acknowledgments}
The work was supported by Russian Science Foundation, Project 23-21-00504.






\EditInfo{November 5, 2024}{November 8, 2024}{David Towers and Ivan Kaygorodov}

\end{document}